\documentclass[11pt,reqno]{amsart}
\usepackage[margin=3cm]{geometry}
\usepackage{color}               
\usepackage{faktor}
\usepackage{tikz}
\usepackage{tikz-cd}
\usepackage{spverbatim}
\usepackage{float}
\usepackage[pdftex,hyperfootnotes=false,colorlinks=false,hypertexnames=false]{hyperref}

\usepackage{amsmath,amssymb,anyfontsize,enumerate,stmaryrd,mathtools, thmtools,tikz,txfonts,nccmath}
\usepackage[oldstyle]{libertine}%
\usepackage[T1]{fontenc}
\usepackage[final]{microtype}
\usepackage[utf8]{inputenc}
\usepackage{csquotes}
\makeatletter
\renewcommand*\libertine@figurestyle{LF}
\makeatother
\usepackage[libertine,libaltvw,liby]{newtxmath}
\makeatletter
\renewcommand*\libertine@figurestyle{OsF}
\makeatother
\usepackage[noerroretextools,backend=biber,style=alphabetic,maxalphanames=5,giveninits,sorting=nyt,%
maxbibnames=99,doi=false,isbn=false,url=false,eprint=false]{biblatex}
\addbibresource{TwistedHurwitz.bib}

\usepackage{tikz-cd}
\usepackage[noabbrev,capitalise]{cleveref}
\usepackage{autonum}
\theoremstyle{plain}
    \newtheorem{theorem}{Theorem}
    \newtheorem{construction/theorem}[theorem]{Construction/Theorem}
    \newtheorem{corollary}[theorem]{Corollary}
    \newtheorem{lemma}[theorem]{Lemma}
    \newtheorem{proposition}[theorem]{Proposition}
\theoremstyle{definition}
    \newtheorem{remark}[theorem]{Remark}
    \newtheorem{example}[theorem]{Example}
    \newtheorem{definition}[theorem]{Definition}
    \newtheorem{construction}[theorem]{Construction}

\DeclareMathOperator{\Aut}{Aut}
\DeclareMathOperator{\trop}{trop}
\DeclareMathOperator {\WC}{WC}

\title[Tropical twisted Hurwitz numbers]{Twisted Hurwitz numbers: Tropical and polynomial structures}
\author[M.~A.~Hahn]{Marvin Anas Hahn}
\address{M.~A.~Hahn: School of Mathematics 17, Westland Row, Trinity College Dublin, Dublin 2, Ireland}
\email{hahnma@tcd.ie}
\author[H.~Markwig]{Hannah Markwig}
\address{H.~Markwig: Universität Tübingen, Fachbereich Mathematik, Auf der Morgenstelle 10, 72076 Tübingen, Germany}
\email{hannah@math.uni-tuebingen.de}

\thanks{\emph{2010 Mathematics Subject Classification:}  14T15, 14N10, 57M12, 05C30.}
\keywords {Tropical geometry, Hurwitz numbers}

\begin{document}

\maketitle
\begin{abstract}

Hurwitz numbers count covers of curves satisfying fixed ramification data. Via monodromy representation, this counting problem can be transformed to a problem of counting factorizations in the symmetric group. This and other beautiful connections make Hurwitz numbers a longstanding active research topic.
In recent work \cite{chapuy2020non}, a new enumerative invariant called \textit{$b$-Hurwitz number} was introduced, which enumerates non-orientable branched coverings. For $b=1$, we obtain twisted Hurwitz numbers which were linked to surgery theory in \cite{BF21} and admit a representation as factorisations in the symmetric group. In this paper, we derive a tropical interperetation of twisted Hurwitz numbers in terms of tropical covers and study their polynomial structure.
\end{abstract}
\section{Introduction}

Hurwitz numbers are enumerations of branched morphisms between Riemann surfaces with fixed numerical data. They go back to work by Adolf Hurwitz in the 1890s \cite{hurwitz1892algebraische} and are now important invariants in enumerative geometry. 
They admit various equivalent descriptions in the language of different areas of mathematics, e.g., as shown by Hurwitz in his above-mentioned work, they can be computed by an enumeration of transitive factorisations in the symmetric group. This equivalence gives rise to a deep connection between Hurwitz theory and the representation theory of the symmetric group; it will also play a key role in the present work. Moreover, Hurwitz numbers are closely related to the algebraic topology underlying Riemann surfaces, since they turn out to be \textit{topological invariants}. While the theory of Hurwitz numbers has been dormant for most of the 20th century, the close relationship between Hurwitz and Gromov-Witten theory discovered in the 1990s has rekindled interest in these enumerative invariants and led to several exciting developments.\vspace{\baselineskip}

\subsection{Hurwitz numbers, Gromov--Witten theory and variants.}
When studying relations between Hurwitz numbers and Gromov--Witten theory, certain classes of Hurwitz numbers with particularly well--behaved structures take center stage. Among these classes are so-called \textit{double Hurwitz numbers}, which are defined as follows.

\begin{definition}[Double Hurwitz numbers]\label{def-hur}
Let $g\ge0$ be a non-negative integer, $n>0$ a positive integer and $\mu,\nu$ partitions of $n$. Moreover, we fix $p_1,\dots,p_b\in\mathbb{P}^1$, where $b=2g-2+\ell(\mu)+\ell(\nu)$. Then, we define a cover of type $(g,\mu,\nu)$ to be a map $f\colon S\to\mathbb{P}^1$, such that
\begin{itemize}
    \item $S$ is a connected Riemann surface of genus $g$;
    \item the ramification profile of $0$ is $\mu$;
    \item the ramification profile of $\infty$ is $\nu$;
    \item the ramification profile of $p_1,\dots,p_b$ is $(2,1\dots,1)$.
\end{itemize}
Two covers $f\colon S\to\mathbb{P}^1$ and $f'\colon S'\to\mathbb{P}^1$ are called equivalent if there exists a homeomorphism $g\colon S\to S'$, such that $f= f'\circ g$.\\
Then, we define \textbf{double Hurwitz numbers} as
\begin{equation}
    h_g(\mu,\nu)=\sum_{[f]}\frac{1}{|\mathrm{Aut}(f)|},
\end{equation}
where the sum runs over all equivalence classes of covers of type $(g,\mu,\nu)$.\\
When $\nu=(1,\dots,1)$, we call $h_g(\mu,\nu)$ a \textbf{single Hurwitz number} and denote it by $h_g(\mu)$.
\end{definition}

At the core of the relationship between double Hurwitz numbers and Gromov--Witten theory is a polynomial structure in the prescribed ramification data of these enumerative invariants. First discovered in the seminal work of Goulden, Jackson and Vakil in \cite{GJV05}, double Hurwitz numbers exhibit a \textit{piecewise polynomial} behaviour. More precisely, we consider the space \begin{equation}
    \mathcal{H}_{m.n}\coloneqq\{(\mu,\nu)\in\mathbb{N}^m\times\mathbb{N}^n\mid \sum \mu_i=\sum \nu_j\}
\end{equation}
of partitions $(\mu,\nu)$ of fixed lengths $m,n$ and of the same size. For fixed $g$, we consider the map
\begin{align}
    h_g\colon \mathcal{H}_{m,n}&\to\mathbb{Q}\\
    (\mu,\nu)&\mapsto h_g(\mu,\nu)
\end{align}
which parametrises double Hurwitz numbers.

The authors of \cite{GJV05} showed that there exists a hyperplane arrangement $\mathcal{R}_{m,n}$ in $\mathcal{H}_{m,n}$ (called the resonance arrangement), such that the map $h_g$ restricted to each connected component (called \textit{chamber}) of $\mathcal{H}_{m,n}\backslash\mathcal{R}_{m,n}$ may be represented as a polynomial in the entries of $\mu$ and $\nu$. In \cite{SSV08, CJM11, Joh15}, the natural question of how the polynomials differ from chamber to chamber was studied. It was observed that there is a recursive structure in the sense that this difference can be expressed by double Hurwitz numbers with smaller input data. This is called a \textit{wall-crossing formula}.\\
We want to highlight the work in \cite{CJM11}, in which a graph theoretic approach towards the polynomiality of double Hurwitz numbers was established. The key technique in this paper revolves around the field of \textit{tropical geometry}. Tropical geometry is a relatively new field of mathematics, which may be described as a combinatorial shadow of algebraic geometry. The tropical geometry perspective allows to degenerate algebraic curves to certain metric graphs that are called \textit{tropical curves}. In this manner, branched morphisms between Riemann surfaces are \textit{tropicalised} to maps between tropical curves that are called \textit{tropical covers}. Motivated by this point of view, a combinatorial interpretation of double Hurwitz numbers in terms of tropical covers was derived in \cite{CJM10}, which laid the groundwork for the analysis of the polynomial behaviour of double Hurwitz numbers undertaken in \cite{CJM11}. In particular, by proceeding along an intricate combinatorial analysis of tropical covers in different chambers, the authors of \cite{CJM11} were able to derive the desired wall-crossing structure.\\
In the past years, several variants of Hurwitz numbers have appeared in the literature in a plethora of different contexts. Among the most prominent ones are so-called \textit{pruned Hurwitz numbers} \cite{do2018pruned,hahn2020bi}, \textit{monotone Hurwitz numbers} \cite{goulden2014monotone}, \textit{strictly monotone Hurwitz numbers} \cite{kazarian2015virasoro}, \textit{completed cycles Hurwitz numbers} \cite{okounkov2006gromov} and many more. For all of these variants the piecewise polynomiality of the double Hurwitz numbers analogue was established and for the majority a wall-crossing structure as well (see e.g. \cite{zbMATH06791415, hahn2018wall,hahn2020wall,shadrin2012double}).\\
While classical Hurwitz theory deals with the enumeration of branched morphisms between \textit{orientable surfaces}, it is very natural to ask for an analogous theory for non-orientable surfaces. Such a new and exciting theory for so-called \textit{$b$-Hurwitz numbers} was introduced in \cite{chapuy2020non} .

\subsection{Twisted Hurwitz numbers.}\label{sec-twistedsingle}
The construction of $b$-Hurwitz numbers is based on the following idea:  Let $\overline{\mathbb{H}}$ be the compactified complex upper halfplane of $\mathbb{P}^1$ and $\mathcal{J}$ the corresponding natural involution on $\mathbb{P}^1$. A generalised branched covering is a covering $f\colon S\to\overline{\mathbb{H}}$ where $S$ is a not-necessarily connected compact orientable surface with orientation double cover $\hat{S}$, such that $f$ may be "lifted" to a branched covering $\hat{S}\to\mathbb{P}^1$. We give a precise formulation in \cref{sec:twistdoub}. Via these generalised coverings the authors of \cite{chapuy2020non} introduce a new one-parameter deformation of classical Hurwitz numbers called $b$-Hurwitz numbers in reference to the $b$-conjecture by Goulden and Jackson in the context of Jack polynomials \cite{goulden1996connection}. In order to obtain $b$-Hurwitz numbers, the authors of \cite{chapuy2020non} associate a non-negative integer $\nu_p(f)$ to any generalised branched covering $f$ which "measures" the non-orientability of the the surface $\tilde{S}$. This non-negative integer is zero if and only if $\tilde{S}$ is orientable. Based on this idea $b$-Hurwitz numbers are defined -- depending on a \textit{measure of non-orientability} $p$ -- as a sum over generalised branched coverings weighted by $b^{\nu_p(f)}$. Thus, one obtains a Hurwitz-type enumerations for any value of $b$. For example, under the convention that $0^0=1$, one  recovers classical Hurwitz numbers for $b=0$. For $b=1$ one obtains enumerations of generalised branched coverings which are called \textit{twisted Hurwitz numbers}. This is the case we study in the present paper.\\
The term twisted Hurwitz numbers was coined in \cite{BF21} in the context of \textit{surgery theory}. Surgery theory studies the construction of new manifolds from given ones via cutting and glueing, such that key properties are preserved. In \cite{BF21}, the enumeration of decompositions of a given surface with boundary and marked points is studied. The term \textit{twisted} is motivated by the fact in \cite{BF21} gluings are performed with respect to a twist of the natural boundary orientations. It was proved in \cite{BF21} that the enumeration of certain decompositions with respect to such a twist may be computed in terms of factorisations in the symmetric group, reminiscent of Hurwitz' result in his original work \cite{hurwitz1892algebraische}. More precisely, we fix the involution $$\tau= (1\;\; n+1) (2 \;\;n+2) \ldots (n \;\;2n)\in \mathbb{S}_{2n}$$ and use the notation

\begin{equation}
    B_n=C(\tau)=\{\sigma \in \mathbb{S}_{2n}\;|\; \sigma \tau \sigma^{-1}=\tau\},\quad C^\sim(\tau)=\{\sigma \in \mathbb{S}_{2n}\;|\; \tau \sigma \tau^{-1} = \tau \sigma \tau =  \sigma^{-1}\}
\end{equation}

where $B_n$ is the hyperoctahedral group. We further define the subset $B^\sim_n \subset C^\sim(\tau)$ consisting of those permutations that have no self-symmetric cycles (see \cite[Lemma 2.1]{BF21}). We then set, for a partition $\lambda$ of $n$,  $B^\sim_\lambda\subset B^\sim_n$ to consist of those permutations that have $2\ell(\lambda)$ cycles, two of length $\lambda_i$ for each i, that pair up under conjugation with $\tau$. We are now ready to define \textbf{twisted single Hurwitz numbers} in terms of the symmetric group.

\begin{definition}[Twisted single Hurwitz numbers, \cite{BF21}]
\label{def:twisthur}
Fix a partition $\lambda$ of $n$ and a number $b$ (the number of transpositions).
Then define
\begin{align}\tilde{h}_{b}(\lambda)= \frac{1}{n!}\sharp \Big\{(\sigma_1,\ldots,\sigma_b)\;|\; \sigma_s=(i_s\;\;j_s),\, j_s\neq \tau(i_s),\, \sigma_1\ldots\sigma_b (\tau\sigma_b\tau)\ldots (\tau\sigma_1\tau) \in B^{\sim}_\lambda\Big\}.\end{align}
\end{definition}

Maybe surprisingly, it was then proved in \cite[Theorem 3.2]{BF21} that these numbers coincide with $b$-Hurwitz numbers for $b=1$ by showing that the generating series' of both invariants satisfy the same PDE with equal initial data.

\subsection{Tropical geometry of twisted Hurwitz numbers.} 
The present paper develops a tropical theory of twisted Hurwitz numbers and demonstrates some first applications. In \cref{sec:twistdoub}, we define a generalisation \cref{def:twisthur} to twisted double Hurwitz numbers $\tilde{h}_g(\mu,\nu)$ which by the same arguments as in \cite[Theorem 3.2]{BF21} coincides with $b$-Hurwitz numbers for $b=1$. This generalisation arises naturally from the symmetric group expression for twisted single Hurwitz numbers. Moreover, we define in \cref{sec:troptwist} a tropical analogue of twisted double Hurwitz numbers in terms of tropical covers. We prove in \cref{sec:corr} that twisted double Hurwitz numbers coincide with their tropical counterpart, thus giving a tropical correspondence theorem for these enumerative invariants. This allows us to derive a purely graph-theoretic interpretation of twisted double Hurwitz numbers in \cref{sec:monodr} by reinterpreting the tropical covers as directed graphs.
Finally, we employ this expression of twisted double Hurwitz numbers as a weighted enumeration of directed graphs to study the polynomiality of twisted Hurwitz numbers in \cref{sec:poly} improving \cite[Theorem 6.6]{chapuy2020non} in the case $b=1$. Finally, we discuss the wall-crossing behaviour of twisted Hurwitz numbers.

\subsection*{Acknowledgements.} 
We would like to thank an anonymous referee for their thorough work and for numerous helpful suggestions on how to improve the first versions of this paper. We thank Rapha\"el Fesler and Veronika K\"orber for useful discussions and comments.
The second author acknowledges support by the Deutsche Forschungsgemeinschaft (DFG, German Research Foundation), Project-ID 286237555, TRR 195. Computations have been made using the Computer Algebra System \textsc{gap} and \textsc{OSCAR} \cite{GAP4, Oscar}.

\section{Twisted double Hurwitz numbers}
\label{sec:twistdoub}
In this section, we define twisted double Hurwitz numbers as a factorization problem in the symmetric group, generalizing the case of single twisted Hurwitz numbers discussed in Subsection \ref{sec-twistedsingle}. We use the notation of Subsection \ref{sec-twistedsingle}. To begin with, we recall that

\begin{equation}
     C^\sim(\tau)=\{\sigma\in \mathbb{S}_{2n} \;|\; \tau \sigma \tau^{-1} = \tau \sigma \tau =  \sigma^{-1}\}.
\end{equation}

It was proved in \cite[Lemma 2.1]{BF21} for $\sigma \in C^\sim(\tau)$ with the decomposition in cycles $\sigma=c_1\cdots c_m$, we have for any $i$ that either
\begin{itemize}
    \item there exists $j\neq i$ with $\tau c_i\tau=c_j^{-1}$ or
    \item we have $\tau c_i\tau=c_i^{-1}$ and $c_i$ has even length.
\end{itemize}
In the first case, $c_i$ and $c_j$ are called \textit{$\tau$-symmetric}, while in the second case $c_i$ is called \textit{self-symmetric}. As mentioned above, we denote by $B_n^\sim\subset C^\sim(\tau)$ the set of permutations without self-symmetric cycles and by $B^\sim_\lambda\subset B^\sim_n$ the set of permutations in $B^\sim_n$ with $2\ell(\lambda)$ cycles and for each $i$ two cycles of length $\lambda_i$ that are $\tau$-symmetric.

\begin{definition}[Cycle type]
Let $\sigma\in B^{\sim}_n$. We denote its cycle type by $C(\sigma)$, which is a partition of $2n$ recording the lengths of the cycles of $\sigma$.
\end{definition}

For a partition $\mu$ of $n$ we denote by $2\mu$ the partition of $2n$ with twice as many parts, where each part is repeated once.

We may now define twisted double Hurwitz numbers generalising \cref{def:twisthur}.

\begin{definition}[Twisted double Hurwitz numbers]
\label{def:twisthurdou}
Let $g\ge0,n>0$ and $\mu,\nu$ partitions of $n$. We define $C_g(\mu,\nu)$ as the set of tuples $(\sigma_1,\eta_1,\dots,\eta_b,\sigma_2)$,
such that we have:
\begin{enumerate}
    \item $b=\frac{2g-2+2\ell(\mu)+2\ell(\nu)}{2}>0$,
    \item $C(\sigma_1)=2\mu$, $C(\sigma_2)=2\nu$, $\eta_i$ are transpositions satisfying $\eta_i\neq \tau \eta_i \tau$,
    \item $\sigma_1\in B^\sim_\mu$,
    \item $\eta_b\cdots\eta_1\sigma_1(\tau\eta_1\tau)\cdots(\tau\eta_b\tau)=\sigma_2$,
    \item the subgroup 
    $$\langle \sigma_1,\eta_1,\ldots,\eta_{b},\tau \eta_1\tau, \ldots,\tau\eta_b\tau,\sigma_2\rangle$$ acts transitively on the set $\{1,\ldots,2d\}$
\end{enumerate}
Then, we define the associated \textit{twisted double Hurwitz number} as
\begin{equation}
    \tilde{h}_{g}(\mu,\nu)=\frac{1}{(2n)!!}|C_g(\mu,\nu)|.
\end{equation}
When we drop the transitivity condition, we obtain possibly disconnected twisted double Hurwitz numbers which we denote $\tilde{h}_{g}^\bullet(\mu,\nu)$.
\end{definition}

\begin{remark}[Conventional differences]
Compared with the definition of twisted single Hurwitz numbers in Definition \ref{def:twisthur}, there are two conventional differences:
\begin{enumerate}
    \item Rather than the number of transpositions, we use the genus (of the source of a twisted tropical cover, see Definition \ref{def-twistedtropcover}) as subscript in the notation. In the usual case of Hurwitz numbers (without twisting) counting covers, this corresponds to the genus of the source curves. By the Riemann-Hurwitz formula, the genus $g$ and the number $b$ of transpositions (i.e.\ simple branch points) are related via
    $$b=\frac{2g-2+2\ell(\mu)+2\ell(\nu)}{2}.$$
    
    \item We choose to normalize with the factor $\frac{1}{(2n)!!}$ rather than $\frac{1}{n!}$. This leads to nicer formulae and structural results.
\end{enumerate}

\end{remark}
\begin{remark}[Connectedness and transitivity]
    As noted above, one can also drop the transitivity condition (5) in Definition \ref{def:twisthurdou} to obtain $h_g^\bullet(\mu,\nu)$. On the tropical side, this amounts to allowing disconnected tropical curves as source of a twisted tropical cover, see Definition \ref{def-twistedtropcover}. In this setting, it can happen that a disconnected twisted tropical cover contains two twisted components which both just correspond to a single edge without any interior vertex. This also happens in the connected case if we allow $b=0$. For this case, one has to adapt the tropical multiplicity we set in Definition \ref{def:twisttrophn}: a twisted tropical cover which consists of a pair of twisted single edges of weight $\mu$ each has multiplicity $\frac{1}{\mu}$. With this adaption, one can easily generalize our results to the disconnected case.
\end{remark}

As mentioned in the introduction, twisted Hurwitz numbers first appeared in the Hurwitz theory of non-orientable surfaces. More precisely, we denote by $\mathcal{J}\colon\mathbb{P}^1\to\mathbb{P}^1$ the complex conjugation, by $\mathbb{H}\coloneqq\{z\in\mathbb{C}\mid\mathrm{Im}(z)\ge0\}$ the complex upper halfplane and let $\overline{\mathbb{H}}\coloneqq\mathbb{H}\cup\{\infty\}$ the compactified complex upper halfplane. Moreover, we denote by $\pi\colon\mathbb{P}^1\to\overline{\mathbb{H}}$ the quotient map.\\
We call continuous maps $f\colon S\to\overline{\mathbb{H}}$ \textit{generalised branched coverings}, if $S$ is a not-necessarily orientable surface and there exists a further map $\hat{f}\colon \hat{S}\to\mathbb{P}^1$ with
    \begin{enumerate}
        \item $p\colon\hat{S}\to S$ is the orientation double cover,
        \item $\pi\circ\hat{f}=f\circ p$,
        \item all the branch points of $\hat{f}$ are real.
    \end{enumerate}
Let $\mathcal{T}\colon \hat{S}\to\hat{S}$ be an orientation reversing involution without fix points, such that $p\circ \mathcal{T}=p$. Then, the second condition of $\hat{f}$ may be reformulated as $\hat{f}\circ\mathcal{T}=\mathcal{J}\circ \hat{f}$. As $\mathcal{T}$ has no fixed points, for any branch points $c\in\mathbb{P}^1_{\mathbb{R}}\subset\mathbb{P}^1$ of $\hat{f}$, the points in its pre-image come in pairs $(a,\mathcal{T}(a))$ with the same ramification index. Thus, the degree of $\hat{f}$ is even and the ramification profile of $c$ repeats any entry twice, e.g. $(\lambda_1,\lambda_1,\dots,\lambda_s,\lambda_s)$. We then say that $f$ has ramification profile $(\lambda_1,\dots,\lambda_s)$ at $\pi(s)\in\partial\overline{\mathbb{H}}$. We further call two generalised branched coverings $f_1$ and $f_2$ equivalent if their lifts $\hat{f_1}$ and $\hat{f_2}$ are, and denote the equivalence class of $f$ by $[f]$. We are now ready to define

\begin{definition}\label{def-nonoriented}
    Let $g\ge0$, $n>0$, $\mu,\nu$ partitions of $n$. Let $b=\frac{2g-2+2\ell(\mu)+2\ell(\nu)}{2}$ and fix $p_1,\dots,p_b$ pairwise distinct real points on $\overline{\mathbb{H}}$. We define $G_g(\mu,\nu)$ as the set of equivalence classes $[f]$ of generalised branched coverings $f\colon S\to\overline{\mathbb{H}}$, such that
    \begin{itemize}
        \item $f$ is of degree $n$,
        \item $f$ has ramification profile $\mu$ over $0$ and $\nu$ over $\infty$,
        \item $f$ has ramification profile $(2,1,\dots,1)$ over $p_i$.
    \end{itemize}
    Then, we define $1$-Hurwitz numbers as
    \begin{equation}
        h^1_g(\mu,\nu)=\sum_{[f]\in G_g(\mu,\nu)}\frac{1}{|\mathrm{Aut}(f)|}.
    \end{equation}
\end{definition}
The parameter $g$ used here is not equal to the genus of the surface $S$, this is $g'=\frac{g+1}{2}.$

The following was proved in \cite[Theorem 3.2]{BF21} for $\nu=(1,\dots,1)$. However, the same approach works for arbitrary $\nu$. In particular, the idea is that in \cite[Theorem 6.5]{chapuy2020non} a recursion of $b$--Hurwitz numbers was derived. Moreover, twisted Hurwitz numbers were proved to satisfy a recursion in \cite[Theorem 2.12]{BF21}. It turns out that these recursions only differ by a factor of $2$. The same argument as in the proof of \cite[Theorem 3.2]{BF21} proves the following result.

\begin{theorem}
    Let $g\ge0$, $n>0$ and $\mu$, $\nu$ partitions of $d$. Then, we have
    \begin{equation}
        h^1_g(\mu,\nu)=2^{-b}\tilde{h}_g^{\bullet}(\mu,\nu).
    \end{equation}
\end{theorem}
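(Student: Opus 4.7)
The plan is to mirror the argument of \cite[Theorem 3.2]{BF21} and establish the identity by showing that both sides satisfy the same recursion with matching initial data, once the correct factor of $2$ per transposition is tracked. The starting point on the geometric side is the $b$-Hurwitz recursion \cite[Theorem 6.5]{chapuy2020non} specialised to $b=1$, which expresses $h^1_g(\mu,\nu)$ in terms of $1$-Hurwitz numbers with one fewer simple branch point and suitably modified ramification profiles or genera arising from cut/join splittings.

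On the combinatorial side, one rewrites $\tilde{h}_g^{\bullet}(\mu,\nu)$ using the symmetric-group expression of \cref{def:twisthurdou} and performs a cut-and-join analysis on the last pair $(\eta_b,\tau\eta_b\tau)$: removing $\eta_b$ from the factorisation either splits or merges a pair of $\tau$-symmetric cycles of the partial product, with the bookkeeping tracking the parallel action of the $\tau$-conjugate transposition. The argument of \cite[Theorem 2.12]{BF21}, originally carried out for $\nu=(1,\dots,1)$, should generalise verbatim to arbitrary $\nu$: the cycle type at $\sigma_2$ plays no special role in the cut-and-join step, since $\sigma_2$ is merely the residual product after one transposition pair has been stripped off. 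The outcome is a recursion of exactly the same combinatorial shape as the Chapuy--Do\l{}\k{e}ga one at $b=1$, but with each cut-and-join contribution scaled by a factor of $2$ relative to its geometric counterpart; this factor arises because the twisted recursion manipulates pairs $(\eta_b,\tau\eta_b\tau)$ rather than single transpositions, and the normalisation $\tfrac{1}{(2n)!!}$ versus $\tfrac{1}{n!}$ in \cref{def:twisthurdou} absorbs part of this discrepancy cleanly.

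Iterating the recursion $b$ times accounts exactly for the global factor $2^{-b}$. To close the induction, I would verify the base cases where no further recursive step is possible, namely factorisations with $b=0$: here both sides reduce to the same count of tuples $(\sigma_1,\sigma_2)\in B^\sim_\mu\times B^\sim_\nu$ with $\sigma_1=\sigma_2$, and $2^{-b}=1$ so the equality is trivial. The Riemann--Hurwitz relation $b=g-1+\ell(\mu)+\ell(\nu)$ guarantees that both recursions terminate at the same base data.

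The main obstacle is verifying that the cut-and-join step of \cite[Theorem 2.12]{BF21} really does extend to arbitrary $\nu$ without any new subtlety. Concretely, one must check that when $\eta_b$ joins or splits cycles of the running product, the resulting cycle structure remains compatible with the prescribed cycle type $2\nu$ of $\sigma_2$, and that no $\tau$-symmetry of cycles is lost or spuriously created. Because the constraint on $\sigma_2$ is purely a condition on cycle type and the recursion operates from the left of $\sigma_2$, this compatibility is automatic; but articulating it precisely, together with the careful comparison of symmetry factors $|\mathrm{Aut}(f)|$ on the geometric side versus the normalising factor $(2n)!!$ on the combinatorial side, is the one non-routine piece that remains.
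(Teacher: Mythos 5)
Your proposal follows essentially the same route the paper takes: both reduce to the observation that the $b=1$ specialisation of the Chapuy--Do\l{}\k{e}ga recursion \cite[Theorem 6.5]{chapuy2020non} and the twisted cut-and-join recursion \cite[Theorem 2.12]{BF21} differ by a uniform factor of $2$ per transposition pair, and then iterate, exactly as in the proof of \cite[Theorem 3.2]{BF21}, after noting that the cut-and-join analysis is insensitive to the cycle type of $\sigma_2$ and so extends to arbitrary $\nu$. One small caveat: the normalisation difference $\tfrac{1}{(2n)!!}$ versus $\tfrac{1}{n!}$ is a constant factor of $2^n$, independent of $b$, so it does not ``absorb part of this discrepancy'' step by step as you suggest; the entire $2^{-b}$ comes from the per-step mismatch in the recursions.
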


\section{Tropical twisted covers and twisted Hurwitz numbers}
\label{sec:troptwist}

In \cite{CJM10}, tropical Hurwitz numbers have been introduced as a count of tropical covers, parallel to the count of covers of algebraic curves from Definition \ref{def-hur}. We start by recalling the basic notions of tropical curves and covers. Then we introduce twisted tropical covers, which can roughly be viewed as tropical covers with an involution. By fixing branch points, we produce a finite count of twisted tropical covers for which we show in the following section that it coincides with the corresponding twisted double Hurwitz number. Readers with a background in the theory of tropical curves are pointed to the fact that we only consider explicit tropical curves in the following, i.e.\ there is no genus hidden at vertices.

\begin{definition}[Abstract tropical curves]
An abstract tropical curve is a connected graph $\Gamma$ with the following data:
\begin{enumerate}
    \item The vertex set of $\Gamma$ is denoted by $V(\Gamma)$ and the edge set of $\Gamma$ is denoted by $E(\Gamma)$.
    \item The $1$-valent vertices of $\Gamma$ are called \textit{leaves} and the edges adjacent to leaves are called \textit{ends}.
    \item The set of edges $E(\Gamma)$ is partitioned into the set of ends $E^\infty(\Gamma)$ and the set of \textit{internal edges} $E^0(\Gamma)$.
    \item There is a length function
    \begin{equation}
        \ell\colon E(\Gamma)\to\mathbb{R}\cup\{\infty\},
    \end{equation}
    such that $\ell^{-1}(\infty)=E^\infty(\Gamma)$.
\end{enumerate}
The genus of an abstract tropical curve $\Gamma$ is defined as the first Betti number of the underlying graph, i.e.\ $g=1-\# E(\Gamma)+\# V(\Gamma)$. An isomorphism of abstract tropical curves is an isomorphism of the underlying graphs that respects the length function. The combinatorial type of an abstract tropical curve is the underlying graph without the length function.
\end{definition}

We are now ready to define the notion of a tropical cover.

\begin{definition}[Tropical covers]
A tropical cover is a surjective harmonic map between abstract tropical curves $\pi\colon \Gamma_1\to\Gamma_2$, i.e.:
\begin{enumerate}
    \item We have $\pi(V(\Gamma_1))\subset V(\Gamma_2)$ and $\pi(E(\Gamma_1))\subset E(\Gamma_2)\cup V(\Gamma_2)$.
    \item Let $e\in E(\Gamma_1)$, such that $\pi(e)\in E(\Gamma_2)$ (otherwise $e$ is contracted to a vertex under $\pi$). Then, we interpret $e$ and $\pi(e)$ as intervals $[0,\ell(e)]$ and $[0,\ell(\pi(e))]$ respectively. We require $\pi$ restricted to $e$ to be a bijective integer linear function $[0,\ell(e)]\to[0,\ell(\pi(e))]$ given by $t\mapsto \omega(e)\cdot t$, with $\omega(e) \in \mathbb{Z}$. If $\pi(e)\in V(\Gamma_2)$, we define $\omega(e)=0$. We call $\omega(e)$ the \textit{weight} of $e$.
    \item For a vertex $v\in V(\Gamma_1)$, we set $v'=\pi(v)\in V(\Gamma_2)$. We choose an edge $e'$ adjacent to $v'$. Then, we define the \textit{local degree} $d_v$ at $v$ as
    \begin{equation}
        d_v=\sum_{\substack{e\in \Gamma_1,\; v \in \partial e\\e^0\cap\pi^{-1}((e')^0)\neq \emptyset}}\omega(e).
    \end{equation}
    Here, we denote by $e^0$ the interior of the edge $e$.
    We require $d_v$ to be independent of the choice of $e'$. We call this independency the \textit{harmonicity} or \textit{balancing condition}.
\end{enumerate}
Moreover, we define the \textit{degree} of $\pi$ as the sum of local degrees at all vertices of $\Gamma_1$ in the pre-image of a given vertex of $\Gamma_2$. The degree is independent of the choice of vertex of $\Gamma_2$. This follows from the harmonicity condition.\\
For any end $e$ of $\Gamma_2$, we define a partition $\mu_e$ as the partition of weights of ends of $\Gamma_1$ mapping to $e$. We call $\mu_e$ the \textit{ramification profile} of $e$.\\
We call two tropical covers $\pi_1\colon\Gamma_1\to\Gamma_2$ and $\pi_2\colon\Gamma_1'\to\Gamma_2$ equivalent if there exists an isomorphism $g\colon\Gamma_1\to\Gamma_1'$ of metric graphs, such that $\pi_2\circ g=\pi_1$.
\end{definition}

\begin{definition}[Twisted tropical covers]\label{def-twistedtropcover}
We define a twisted tropical cover of type $(g,\mu,\nu)$ to be a tropical cover $\pi\colon\Gamma_1\to\Gamma_2$ with an involution $\iota\colon\Gamma_1\to\Gamma_1$ which respects the cover $\pi$, such that:
\begin{itemize}
    \item The target $\Gamma_2$ is a subdivided version of $\mathbb{R}$ with vertices $p_1,\dots,p_b=V(\Gamma_2)$, where $p_i<p_{i+1}$. Here, $b=\frac{2g-2+2\ell(\mu)+2\ell(\nu)}{2}$. These points are called the \emph{branch points}.
    \item There are $\ell(\mu)$ pairs of ends mapping to $(-\infty,p_1]$ with weights $\mu_1,\dots,\mu_{\ell(\mu)}$ and $\ell(\nu)$ pairs of ends mapping to $[p_b,\infty)$ with weights $\nu_1,\ldots,\nu_{\ell(\nu)}$.
    \item in the preimage of each point $p_i$, there are either two $3$-valent vertices or one $4$-valent vertex.
    \item the edges adjacent to a $4$-valent vertex all have the same weight.
    \item the fixed locus of $\iota$ is exactly the set of $4$-valent vertices.
\end{itemize}
\end{definition}

\begin{definition}[Quotient graph $\Gamma/\iota$]
Let $\pi\colon \Gamma\to \mathbb{R}$ be a twisted tropical cover with involution $\iota\colon\Gamma\to\Gamma$. The involution $\iota$ induces a symmetric relation on the vertex and edge sets of $\Gamma$: We define for $v,v'\in V(\Gamma)$  (resp. $e,e'\in E(\Gamma)$) that $v\sim v'$ (resp. $e\sim_\iota e'$) if and only $\iota(v)=v'$ (resp. $\iota(e)=e'$). We define $\Gamma/\iota$ as the graph with vertex set $V(\Gamma)/\sim$ and edge set $E(\Gamma)/\sim$ with natural identifications. For $e=[e',e'']\in E(\Gamma/\iota)$ we define the length $\ell(e)$ as $\ell(e')=\ell(e'')$ and its weight $\omega(e)$ with respect to $\pi$ to be the weight $\omega(e')=\omega(e'')$.
\end{definition}

\begin{example}
The graph on the left in \cref{fig:extwist} illustrates a twisted tropical cover $\pi$ of type $(0,(4),(2,2))$. We fix $p_1=-5$ and $p_2=5$. The graph on the top represents the source curve $\Gamma_1$, whereas the graph $\Gamma_2$ on the bottom is $\mathbb{R}$ with two marked points $p_1$ and $p_2$. The graph above is mapped to the graph below via the projection indicated by the arrow, e.g. the two left most edges of $\Gamma_1$ are each mapped to $(-\infty,-5)$. The green labels indicate the lengths of the corresponding edge. We have six unbounded edges of $\Gamma_1$, whose lengths we denote by $\infty$ and two bounded edges, each of length five and each mapped to $(-5,5)$. The graph $\Gamma_2$ has two unbounded edges corresponding to the intervals $(-\infty,-5)$ and $(5,\infty)$ and one bounded edge which corresponds to $(-5,5)$. The red labels indicate the weights $\omega(e)$ of the corresponding edges $e$. For the two bounded edges of $\Gamma_1$, we see immediately that the corresponding weight has to be $2$, since $\ell(\pi(e))=\omega(e)\cdot \ell(e)$. For the unbounded edges the weight cannot be read of the difference of lengths of the original edge and its image. However, the choices of weights in \cref{fig:extwist} satisfy the balancing condition. The involution $\iota$ can be visualized in the picture as the reflection along a horizontal line passing through the $4$-valent vertex. We can see that the picture
yields a twisted tropical cover of type $(0,(4),(2,2))$.\\
On the right of \cref{fig:extwist} the quotient graph $\Gamma/\iota$ is illustrated.
\end{example}

\begin{definition}[Automorphisms]
    Let $\pi:\Gamma\rightarrow \mathbb{R}$ be a twisted tropical cover with involution $\iota:\Gamma\rightarrow \Gamma$. An automorphism of $\pi$ is a morphism of abstract tropical curves (i.e.\ a map of metric graphs) $f:\Gamma\rightarrow\Gamma$ respecting the cover and the involution, i.e.\ $\pi\circ f = \pi$ and $f\circ \iota = \iota \circ f$. We denote the group of automorphisms of $\pi$ by $\Aut(\pi)$.
\end{definition}

\begin{figure}
\tikzset{every picture/.style={line width=0.75pt}} 

\begin{tikzpicture}[x=0.75pt,y=0.75pt,yscale=-1,xscale=1,scale=0.93]

\draw [line width=1.5]    (13.74,63.25) -- (81.59,63.25) ;
\draw [line width=1.5]    (81.59,63.25) .. controls (109.62,27.48) and (239.42,16.99) .. (286.62,16.99) ;
\draw [line width=1.5]    (81.59,63.25) .. controls (109.62,100.95) and (190.75,75.3) .. (198.86,133.6) ;
\draw [line width=1.5]    (13,203.18) -- (80.85,203.18) ;
\draw [line width=1.5]    (80.85,203.18) .. controls (108.88,250.22) and (245.32,250.22) .. (286.62,249.05) ;
\draw [line width=1.5]    (80.85,203.18) .. controls (109.62,168.59) and (190.75,190.74) .. (198.86,133.6) ;
\draw [line width=1.5]    (198.86,133.6) .. controls (226.88,98.62) and (242.37,98.62) .. (287.36,98.62) ;
\draw [line width=1.5]    (198.86,133.6) .. controls (228.36,168.59) and (243.11,167.42) .. (287.36,168.59) ;
\draw [line width=1.5]    (149.69,263.09) -- (149.69,320.05) ;
\draw [shift={(149.69,323.05)}, rotate = 270] [color={rgb, 255:red, 0; green, 0; blue, 0 }  ][line width=1.5]    (14.21,-4.28) .. controls (9.04,-1.82) and (4.3,-0.39) .. (0,0) .. controls (4.3,0.39) and (9.04,1.82) .. (14.21,4.28)   ;
\draw [line width=1.5]    (13.97,374.27) -- (286.88,375.52) ;
\draw [line width=1.5]    (81.59,386.77) -- (81.59,363.03) ;
\draw [line width=1.5]    (198.34,385.52) -- (198.34,361.78) ;
\draw [color={rgb, 255:red, 155; green, 155; blue, 155 }  ,draw opacity=1 ][line width=1.5]  [dash pattern={on 5.63pt off 4.5pt}]  (13,132) -- (287.36,132) ;
\draw    (290.57,63.44) .. controls (327.96,103.22) and (329.28,170.72) .. (290.02,209.26) ;
\draw [shift={(288.2,211)}, rotate = 317.09] [fill={rgb, 255:red, 0; green, 0; blue, 0 }  ][line width=0.08]  [draw opacity=0] (8.93,-4.29) -- (0,0) -- (8.93,4.29) -- cycle    ;
\draw [shift={(288.2,61)}, rotate = 44.8] [fill={rgb, 255:red, 0; green, 0; blue, 0 }  ][line width=0.08]  [draw opacity=0] (8.93,-4.29) -- (0,0) -- (8.93,4.29) -- cycle    ;
\draw [line width=1.5]    (342.74,132.74) -- (410.59,132.74) ;
\draw [line width=1.5]    (410.59,132.74) .. controls (438.62,96.98) and (568.42,86.48) .. (615.62,86.48) ;
\draw [line width=1.5]    (410.59,132.74) .. controls (438.62,170.44) and (519.75,144.79) .. (527.86,203.1) ;
\draw [line width=1.5]    (527.86,203.1) .. controls (555.88,168.11) and (571.37,168.11) .. (616.36,168.11) ;

\draw (73.28,392.91) node [anchor=north west][inner sep=0.75pt]    {$-5$};
\draw (192.34,392.91) node [anchor=north west][inner sep=0.75pt]    {$5$};
\draw (31.98,39.37) node [anchor=north west][inner sep=0.75pt]  [color={rgb, 255:red, 65; green, 117; blue, 5 }  ,opacity=1 ]  {$\infty $};
\draw (31.5,180.54) node [anchor=north west][inner sep=0.75pt]  [color={rgb, 255:red, 65; green, 117; blue, 5 }  ,opacity=1 ]  {$\infty $};
\draw (28.58,351.68) node [anchor=north west][inner sep=0.75pt]  [color={rgb, 255:red, 65; green, 117; blue, 5 }  ,opacity=1 ]  {$\infty $};
\draw (248.46,355.43) node [anchor=north west][inner sep=0.75pt]  [color={rgb, 255:red, 65; green, 117; blue, 5 }  ,opacity=1 ]  {$\infty $};
\draw (231.43,228.01) node [anchor=north west][inner sep=0.75pt]  [color={rgb, 255:red, 65; green, 117; blue, 5 }  ,opacity=1 ]  {$\infty $};
\draw (250.4,146.81) node [anchor=north west][inner sep=0.75pt]  [color={rgb, 255:red, 65; green, 117; blue, 5 }  ,opacity=1 ]  {$\infty $};
\draw (247.97,79.35) node [anchor=north west][inner sep=0.75pt]  [color={rgb, 255:red, 65; green, 117; blue, 5 }  ,opacity=1 ]  {$\infty $};
\draw (212.95,1.89) node [anchor=north west][inner sep=0.75pt]  [color={rgb, 255:red, 65; green, 117; blue, 5 }  ,opacity=1 ]  {$\infty $};
\draw (133.97,66.85) node [anchor=north west][inner sep=0.75pt]  [color={rgb, 255:red, 65; green, 117; blue, 5 }  ,opacity=1 ]  {$5$};
\draw (130.56,156.8) node [anchor=north west][inner sep=0.75pt]  [color={rgb, 255:red, 65; green, 117; blue, 5 }  ,opacity=1 ]  {$5$};
\draw (132.17,347.93) node [anchor=north west][inner sep=0.75pt]    {$\textcolor[rgb]{0.25,0.46,0.02}{10}$};
\draw (33.76,75.6) node [anchor=north west][inner sep=0.75pt]    {$\textcolor[rgb]{0.82,0.01,0.11}{4}$};
\draw (34.24,215.51) node [anchor=north west][inner sep=0.75pt]    {$\textcolor[rgb]{0.82,0.01,0.11}{4}$};
\draw (128.13,96.84) node [anchor=north west][inner sep=0.75pt]    {$\textcolor[rgb]{0.82,0.01,0.11}{2}$};
\draw (133.48,186.78) node [anchor=north west][inner sep=0.75pt]    {$\textcolor[rgb]{0.82,0.01,0.11}{2}$};
\draw (232.72,259.24) node [anchor=north west][inner sep=0.75pt]    {$\textcolor[rgb]{0.82,0.01,0.11}{2}$};
\draw (253.15,176.79) node [anchor=north west][inner sep=0.75pt]    {$\textcolor[rgb]{0.82,0.01,0.11}{2}$};
\draw (251.2,108.08) node [anchor=north west][inner sep=0.75pt]    {$\textcolor[rgb]{0.82,0.01,0.11}{2}$};
\draw (215.69,33.12) node [anchor=north west][inner sep=0.75pt]    {$\textcolor[rgb]{0.82,0.01,0.11}{2}$};
\draw (321.04,120) node [anchor=north west][inner sep=0.75pt]   [align=left] {$\displaystyle \iota $};
\draw (360.98,108.86) node [anchor=north west][inner sep=0.75pt]  [color={rgb, 255:red, 65; green, 117; blue, 5 }  ,opacity=1 ]  {$\infty $};
\draw (576.97,148.84) node [anchor=north west][inner sep=0.75pt]  [color={rgb, 255:red, 65; green, 117; blue, 5 }  ,opacity=1 ]  {$\infty $};
\draw (541.95,71.38) node [anchor=north west][inner sep=0.75pt]  [color={rgb, 255:red, 65; green, 117; blue, 5 }  ,opacity=1 ]  {$\infty $};
\draw (462.97,136.35) node [anchor=north west][inner sep=0.75pt]  [color={rgb, 255:red, 65; green, 117; blue, 5 }  ,opacity=1 ]  {$5$};
\draw (362.76,145.09) node [anchor=north west][inner sep=0.75pt]    {$\textcolor[rgb]{0.82,0.01,0.11}{4}$};
\draw (457.13,166.33) node [anchor=north west][inner sep=0.75pt]    {$\textcolor[rgb]{0.82,0.01,0.11}{2}$};
\draw (580.2,177.57) node [anchor=north west][inner sep=0.75pt]    {$\textcolor[rgb]{0.82,0.01,0.11}{2}$};
\draw (544.69,102.62) node [anchor=north west][inner sep=0.75pt]    {$\textcolor[rgb]{0.82,0.01,0.11}{2}$};

\end{tikzpicture}
\caption{}
\label{fig:extwist}
\end{figure}

\begin{definition}[Twisted tropical Hurwitz number]
\label{def:twisttrophn}
We define the tropical twisted double Hurwitz 
number $\tilde{h}^{\trop}_g(\mu,\nu)$ to be  the weighted enumeration of equivalence classes of twisted tropical covers of type $(g,\mu,\nu)$, such that each equivalence class $[\pi\colon \Gamma\to\mathbb{R}]$ is counted with multiplicity
\begin{equation}
    2^{b}\cdot \frac{1}{|\Aut(\pi)|}\cdot\prod_V (\omega_V-1)\prod_e\omega(e),
\end{equation}
where $b$ is the number of branch points. Moreover, the first product goes over all $4$-valent vertices and $\omega_V$ denotes the weight of the adjacent edges, while the second product is taken over all internal edges of the quotient graph $\Gamma/\iota$ and $\omega(e)$ denotes their weights.

\end{definition}

\section{The Correspondence Theorem}
\label{sec:corr}


In this section, we show that  twisted double Hurwitz numbers coincide with their tropical counterparts.
In Construction \ref{con-twist}, we associate a twisted tropical cover to a tuple in the symmetric group counting towards a twisted Hurwitz number. In Lemma \ref{lem-outcomeconstruction}, we show that the outcome is indeed a twisted tropical cover. In Proposition \ref{thm-covermult}, we show that the multiplicity with which a twisted tropical cover is counted exactly equals the number of tuples that are mapped to it via Construction \ref{con-twist}. This proof builds on \cite[Theorem 2.12]{BF21}, where the cut-and-join operator for twisted Hurwitz numbers was derived.

The result is summed up in Theorem \ref{thm-corres}.

\begin{construction}
\label{con-twist}
Let $(\sigma_1,
\eta_1,\dots,\eta_b,\sigma_2)\in C_g(\mu,\nu)$. We let $\Gamma_2$ be $\mathbb{R}$ subdivided by $b$ vertices $p_i$ and construct a twisted tropical cover $\pi\colon\Gamma_1\to\Gamma_2$ associated to this tuple.

\begin{enumerate}
    \item First, we fix $p_1,\dots,p_b\in V(\Gamma_2)$ with $p_i<p_{i+1}$ and set $p_0=-\infty$ and $p_{b+1}=
    \infty$.
    \item We begin with $2\ell(\mu)$ ends over the interval $(-\infty,p_1]$, labelled by $\sigma_1^1,\dots,\sigma_1^{2\ell(\mu)}$, where $\sigma_1=\sigma_1^1\circ \ldots \circ \sigma_1^{2\ell(\mu)}$ is a decomposition of $\sigma_1$ into cycles. The action of $\tau$ on $\sigma_1$ yields an involution on these ends.
    \item By \cite[Theorem 2.14]{BF21}, the product
    \begin{equation}
    \label{equ:firststep}
        \eta_1\circ\sigma_1\circ(\tau\eta_1\tau)
    \end{equation}
    gives rise to three cases.
    \begin{enumerate}
        \item In this first case, we have four cycles $\sigma_1^1,\sigma_1^2,\sigma_1^3,\sigma_1^4$ of $\sigma_1$, such that
        \begin{equation}
            \sigma_1^1\sigma_1^2\sigma_1^3\sigma_1^4\in B^{\sim}_n.
        \end{equation}
        Then, the product in \cref{equ:firststep} joins two pairs of these cycles to two new cycles, e.g. $\sigma_1^1$, $\sigma_1^2$ to a new cycle $\eta_1\sigma_1^1\sigma_1^2\tau\eta_1\tau$; and $\sigma_1^3$, $\sigma_1^4$ to a new cycle $\eta_1\sigma_1^3\sigma_1^4\tau\eta_1\tau$.
        In this case, we create two vertices over $p_1$ each adjacent to two ends. We attach the ends that are joint via \cref{equ:firststep} to the same vertex. Moreover, we attach to each vertex two edges projecting to $[p_1,p_2]$ that are temporarily labeled by the corresponding cycles obtained from the join. Later, we replace this label by the length of the cycle as weight. This is illustrated in the following picture for the case that $\sigma_1^1$ is joined with $\sigma_1^2$ and $\sigma_1^3$ is joined with $\sigma_1^4$. Again, the action of $\tau$ on the permutation obtained from the join yields an involution of the tropical cover.

        \begin{figure}[H]

        \scalebox{0.6}{
        \tikzset{every picture/.style={line width=0.75pt}} 
        
        \begin{tikzpicture}[x=0.75pt,y=0.75pt,yscale=-1,xscale=1]
        
        \draw    (51,30.5) -- (410.89,96.05) ;
        \draw    (51,156.22) -- (410.89,96.05) ;
        \draw    (410.89,96.05) -- (682,94.97) ;
        \draw    (51,178.78) -- (410.89,244.33) ;
        \draw    (51,304.5) -- (410.89,244.33) ;
        \draw    (410.89,244.33) -- (682,243.25) ;
        \draw    (50,370.5) -- (680,369.5) ;
        \draw    (411,360.5) -- (411,380.5) ;
        
        \draw (237.04,35.08) node [anchor=north west][inner sep=0.75pt]   [align=left] {$\displaystyle \sigma _{1}^{1}$};
        \draw (241.03,131.17) node [anchor=north west][inner sep=0.75pt]   [align=left] {$\displaystyle \sigma _{1}^{2}$};
        \draw (240.04,183.36) node [anchor=north west][inner sep=0.75pt]   [align=left] {$\displaystyle \sigma _{1}^{3}$};
        \draw (245.03,278.46) node [anchor=north west][inner sep=0.75pt]   [align=left] {$\displaystyle \sigma _{1}^{4}$};
        \draw (405,384) node [anchor=north west][inner sep=0.75pt]   [align=left] {$\displaystyle p_{1}$};
        \draw (505,68) node [anchor=north west][inner sep=0.75pt]   [align=left] {$\displaystyle \eta_1\sigma_1^1\sigma_1^2\tau\eta_1\tau$};
        \draw (510,215) node [anchor=north west][inner sep=0.75pt]   [align=left] {$\displaystyle \eta_1\sigma_1^3\sigma_1^4\tau\eta_1\tau$};

        \end{tikzpicture}}
        
        \end{figure}
           
        \item In the second case, we have two cycles $\sigma_1^1,\sigma_1^2$ of $\sigma_1$, such that
        \begin{equation}
            \sigma_1^1\sigma_1^2\in B^{\sim}_n.
        \end{equation}
        Then the product in \cref{equ:firststep} splits two cycles $\sigma_1^1$ and $\sigma_1^2$ each into two cycles, that we denote by $\sigma_1^{1,a},\sigma_1^{1,b}$ and $\sigma_1^{2,a},\sigma_1^{2,b}$ respectively. In this case, we create two vertices over $p_1$ each adjacent to one end labeled by $\sigma_1^1$ and $\sigma_1^2$ respectively. Moreover, we attach to each vertex two edges projecting to $[p_1,p_2]$ that are temporarily labeled by the corresponding cycles obtained from the split, i.e.\ the new edges attached to the vertex adjacent to $\sigma_1^i$ are labeled by $\sigma_1^{i,a}$ and $\sigma_1^{i,b}$. We illustrate this construction in the following picture.
         \begin{figure}[H]
        \scalebox{0.6}{

        \tikzset{every picture/.style={line width=0.75pt}} 
        
        \begin{tikzpicture}[x=0.75pt,y=0.75pt,yscale=-1,xscale=1]
        
        \draw    (682,29.5) -- (411,95.5) ;
        \draw    (682,156.22) -- (411,95.5) ;
        \draw    (411,95.5) -- (51,94.97) ;
        \draw    (682,178.78) -- (410,243.5) ;
        \draw    (683,303.5) -- (410,243.5) ;
        \draw    (410,243.5) -- (51,243.25) ;
        \draw    (50,370.5) -- (680,369.5) ;
        \draw    (411,360.5) -- (411,380.5) ;
        
        \draw (405,384) node [anchor=north west][inner sep=0.75pt]   [align=left] {$\displaystyle p_{1}$};
        \draw (215,66) node [anchor=north west][inner sep=0.75pt]   [align=left] {$\displaystyle \sigma _{1}^{1}$};
        \draw (216,253) node [anchor=north west][inner sep=0.75pt]   [align=left] {$\displaystyle \sigma _{1}^{2}$};
        \draw (528,31) node [anchor=north west][inner sep=0.75pt]   [align=left] {$\displaystyle \sigma _{1}^{1,a}$};
        \draw (529,131) node [anchor=north west][inner sep=0.75pt]   [align=left] {$\displaystyle \sigma _{1}^{1,b}$};
        \draw (529,186) node [anchor=north west][inner sep=0.75pt]   [align=left] {$\displaystyle \sigma _{1}^{2,a}$};
        \draw (530,282) node [anchor=north west][inner sep=0.75pt]   [align=left] {$\displaystyle \sigma _{1}^{2,b}$};

        \end{tikzpicture}

        }
        \end{figure}

        \item In the third case, we have two cycles $\sigma_1^1$ and $\sigma_1^2$ of $\sigma_1$ of the same length. Here, the product in \cref{equ:firststep} rearranges $\sigma_1^1$ and $\sigma_1^2$ into two new cycles $\tilde{\sigma}_1^1,\tilde{\sigma}_1^2$ of the same length. In this case, we create one vertex over $p_1$ that joins the two ends labeled by $\sigma_1^1,\sigma_1^2$. Moreover, we attach two new edges to this vertex that map to $[p_1,p_2]$ and that are labeled by $\tilde{\sigma}_1^1,\tilde{\sigma}_1^2$. 
        We illustrate this construction in the following picture.
    
        \begin{figure}[H]
        
        \tikzset{every picture/.style={line width=0.75pt}} 
        
        \scalebox{0.6}{\begin{tikzpicture}[x=0.75pt,y=0.75pt,yscale=-1,xscale=1]
        
        \draw    (50,370.5) -- (659,369.5) ;
        \draw    (411,360.5) -- (411,380.5) ;
        \draw    (50,31) -- (411,161) ;
        \draw    (50,291) -- (411,161) ;
        \draw    (660,30) -- (411,161) ;
        \draw    (661,291) -- (411,161) ;
        
        \draw (405,384) node [anchor=north west][inner sep=0.75pt]   [align=left] {$\displaystyle p_{1}$};
        \draw (178,52) node [anchor=north west][inner sep=0.75pt]   [align=left] {$\displaystyle \sigma _{1}^{1}$};
        \draw (553,253) node [anchor=north west][inner sep=0.75pt]   [align=left] {$\displaystyle \widetilde{\sigma _{1}^{2}}$};
        \draw (555,40) node [anchor=north west][inner sep=0.75pt]   [align=left] {$\displaystyle \widetilde{\sigma _{1}^{1}}$};
        \draw (181,243) node [anchor=north west][inner sep=0.75pt]   [align=left] {$\displaystyle \sigma _{1}^{2}$};
        
        \end{tikzpicture}}

        \end{figure}

    \end{enumerate}
    In each case, we further extend all ends not attached to a vertex over $p_1$ to $[p_1,p_2]$.
    \item We now take the permutation $\eta_1\sigma_1\tau\eta_1\tau$ and consider the product
    \begin{equation}
        \eta_2(\eta_1\sigma_1\tau\eta_1\tau)\tau\eta_2\tau.
    \end{equation}
    We proceed as in step (2) and create the corresponding vertices over $p_2$. Moreover, we proceed inductively for $\eta_i\eta_{i-1}\dots\eta_1\sigma_1\tau\eta_1\cdots\eta_i\tau$ until $i=b$. For $i=b$, we obtain ends that are labeled by the cycles of $\sigma_2$ and that project to $[p_b,\infty)$. This gives a map between graphs $\tilde{\pi}\colon\tilde{\Gamma_1}\to\Gamma_2$.
    \item The conjugation by $\tau$ induces a natural involution $
    \tilde{\iota}\colon\tilde{\Gamma}_1\to\tilde{\Gamma}_1$ that respects the map $\tilde{\pi}$.
    \item Next, for each edge $e$ of $\tilde{\Gamma}_1$, we replace the cycle labeling it by the length of this cycle. We consider this cycle length as the weight of $e$. 
\end{enumerate}
\end{construction}

\begin{lemma}\label{lem-outcomeconstruction}
Let $(\sigma_1,
\eta_1,\dots,\eta_b,\sigma_2)\in C_g(\mu,\nu)$. Perform Construction \ref{con-twist} to obtain $\pi\colon\Gamma_1\to\Gamma_2$. Then $\pi\colon\Gamma_1\to\Gamma_2$ is indeed a twisted tropical cover.
    \end{lemma}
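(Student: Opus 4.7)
The plan is to verify each bullet of Definition \ref{def-twistedtropcover} in turn, using \cite[Theorem 2.14]{BF21} as the central structural input that classifies the effect of the twisted cut-and-join multiplication $\eta_i(\cdot)\tau\eta_i\tau$ on cycle types in $B_n^\sim$.

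First, I would observe that by \cite[Theorem 2.14]{BF21} together with the assumption $\eta_i \neq \tau\eta_i\tau$, at every step $i = 1,\dots,b$ the product $\eta_i\,\eta_{i-1}\cdots\eta_1\sigma_1\tau\eta_1\tau\cdots\tau\eta_i\tau$ is again an element of $B_n^\sim$, and falls into exactly one of the three cases (a), (b), (c) of Construction \ref{con-twist}. This guarantees that the local picture above every $p_i$ is precisely one of the two shapes allowed in Definition \ref{def-twistedtropcover}: cases (a) and (b) produce two $3$-valent vertices over $p_i$, while case (c) produces one $4$-valent vertex. In particular, the intermediate graph is well defined, and the final permutation obtained after $b$ steps is exactly $\sigma_2 \in B_\nu^\sim$ by condition (4) of Definition \ref{def:twisthurdou}.

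Next, I would verify harmonicity at every vertex of $\Gamma_1$. In case (a) the vertex has two edges to the left carrying the weights $|\sigma_1^i|,|\sigma_1^j|$ of the two cycles being joined, and one edge to the right carrying the weight $|\sigma_1^i|+|\sigma_1^j|$ of the joined cycle, giving balancing $|\sigma_1^i|+|\sigma_1^j|=|\sigma_1^i|+|\sigma_1^j|$. Case (b) is the mirror image and is handled identically. In case (c) the two interacting cycles have equal length by the self-symmetric setup, and the two resulting cycles inherit this common length; this simultaneously verifies balancing (with sum $2a$ on each side) and the additional requirement that all four edges meeting the $4$-valent vertex carry the same weight. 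Since all edge weights are lengths of cycles in $\mathbb{S}_{2n}$, they are positive integers, so $\pi$ is a surjective harmonic map of abstract tropical curves.

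I would then check that the involution induced by conjugation with $\tau$ on cycle labels is well defined on $\Gamma_1$ and has the correct fixed locus. Since $\tau^2 = e$, conjugation is an involution, and the compatibility $\pi\circ\iota = \pi$ is automatic because $\tau$-conjugation preserves cycle length. The condition $\eta_i \neq \tau\eta_i\tau$ forces cases (a) and (b) to produce two genuinely distinct vertices over $p_i$ which are swapped by $\iota$, while case (c) arises exactly from the self-symmetric cut-and-join on a $\tau$-symmetric pair of cycles of equal length, whose associated vertex is therefore fixed by $\iota$. This establishes simultaneously the involution property, the fact that the fixed locus of $\iota$ is precisely the set of $4$-valent vertices, and that no other vertex is fixed. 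Finally, the boundary data are correct: the leftmost ends are labelled by the cycles of $\sigma_1 \in B_\mu^\sim$, whose lengths form the partition $2\mu$ grouped into $\ell(\mu)$ $\tau$-symmetric pairs, and analogously on the right via $\sigma_2 \in B_\nu^\sim$; connectedness of $\Gamma_1$ follows from the transitivity condition (5) of Definition \ref{def:twisthurdou}.

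The only step that is not pure bookkeeping is invoking the clean three-case classification of \cite[Theorem 2.14]{BF21}; everything else is a direct case-by-case verification, so I do not expect a substantive obstacle.
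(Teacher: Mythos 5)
Your proof is correct and takes essentially the same approach as the paper's: harmonicity follows because cycle lengths add up across each cut-and-join step, the involution comes from $\tau$-conjugation with fixed locus precisely the $4$-valent vertices produced in the self-symmetric case, and connectedness follows from the transitivity condition. The paper states this in three sentences; you supply the case-by-case verification (via \cite[Theorem 2.14]{BF21}) that the paper leaves implicit, but the substance is the same.
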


\begin{proof}
    This follows from the fact that the lengths of cycles (which become weights of the cover) in the cut-and-join analysis add up as expected in the harmonicity condition. The involution is obtained from the action of $\tau$. The connectedness of $\Gamma_1$ follows from the transitiviy condition in Definition \ref{def:twisthurdou} (5).
\end{proof}

\begin{remark}
    
    Given a twisted tropical cover, we can label the edges recursively by cycles of a permutation $\sigma_1$ resp.\ of permutations of the form $\eta_1\sigma_1\tau\eta_1\tau$ and so on. In the following, we study how many ways there are to label the edges of a cover. This builds on \cite[Theorem 2.12]{BF21} which studies the cut-and-join analysis for the action of two partner transpositions. With this, we can determine the number of labels for the edges on the right of a vertex, if the edges on the left are already labeled. We obtain a recursive way to count the number of tuples that yield a given twisted tropical cover. This is detailed in Proposition \ref{thm-covermult}. Before, we count the number of ways to label the left ends of a twisted tropical cover.
   
\end{remark}

The following is a well-known fact (see e.g.\ \cite{goulden1996connection}, Proposition 5.2). For the sake of completeness, we nevertheless include the short proof below.
\begin{lemma}
\label{lem-conjclass}
Let $\mu$ be a partition of $n$.
The cardinality of $B^\sim_\mu$ equals
$$ \frac{1}{2^{\ell(\mu)}} \frac{1}{\prod \mu_i} \frac{1}{|\Aut(\mu)|}(2n)!! .$$
\end{lemma}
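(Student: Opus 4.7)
The plan is to enumerate $B^\sim_\mu$ directly by describing each such permutation as a disjoint union of $\ell(\mu)$ $\tau$-symmetric pairs of cycles, one pair $\{c,\tau c\tau\}$ of length $\mu_i$ for each part $\mu_i$. First I would observe that since $\sigma\in B^\sim_\mu$ has no self-symmetric cycle, each cycle $c$ of $\sigma$ must satisfy $\mathrm{supp}(c)\cap\tau(\mathrm{supp}(c))=\emptyset$; otherwise the disjoint cycles $c$ and $\tau c\tau$ of $\sigma$ would share an element. Hence the joint support $T_i=\mathrm{supp}(c_i)\sqcup\mathrm{supp}(\tau c_i\tau)$ of each pair is a $\tau$-invariant subset of $\{1,\dots,2n\}$ of size $2\mu_i$, that is, a union of exactly $\mu_i$ of the $n$ $\tau$-orbits in $\{1,\dots,2n\}$.

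Next, for a fixed $\tau$-invariant set $T$ of size $2\mu_i$, I would count the $\tau$-symmetric pairs $\{c,\tau c\tau\}$ of $\mu_i$-cycles supported on $T$. Picking such an \emph{ordered} pair amounts to choosing a subset $S\subset T$ of size $\mu_i$ containing exactly one element from each of the $\mu_i$ $\tau$-orbits comprising $T$ (giving $2^{\mu_i}$ choices) and then a $\mu_i$-cycle on $S$ (giving $(\mu_i-1)!$ choices); the partner cycle $\tau c\tau$ is then forced. Because the data $(S,c)$ and $(\tau(S),\tau c\tau)$ yield the same unordered pair, the number of pairs is $2^{\mu_i-1}(\mu_i-1)!$.

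Assembling: after artificially labelling the parts of $\mu$, there are $\binom{n}{\mu_1,\ldots,\mu_{\ell(\mu)}}=n!/\prod_i\mu_i!$ ways to split the $n$ $\tau$-orbits into ordered blocks of sizes $\mu_1,\ldots,\mu_{\ell(\mu)}$. Each such block determines one of the supports $T_i$, onto which $2^{\mu_i-1}(\mu_i-1)!$ symmetric pairs can be placed, and we divide by $|\Aut(\mu)|$ to cancel the arbitrary ordering of equal parts. Multiplying everything together gives
\[
|B^\sim_\mu|=\frac{1}{|\Aut(\mu)|}\cdot\frac{n!}{\prod_i\mu_i!}\cdot\prod_i 2^{\mu_i-1}(\mu_i-1)!=\frac{2^{n-\ell(\mu)}\,n!}{|\Aut(\mu)|\cdot\prod_i\mu_i},
\]
and substituting $(2n)!!=2^n n!$ yields the stated formula.

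The main point to get right is the factor $1/2$ in the middle step: it records that the unordered pair $\{c,\tau c\tau\}$ is parametrised twice when one uses ordered data $(S,c)$, and it is precisely this $2^{\ell(\mu)}$-fold overcount which produces the $1/2^{\ell(\mu)}$ in the final formula. Otherwise one must only verify that the map $(S_i,c_i)_{i=1}^{\ell(\mu)}\mapsto\sigma=\prod_i c_i\cdot\tau c_i\tau$ defines a bijection between the assembled combinatorial data (modulo the symmetries just described) and $B^\sim_\mu$, which is immediate from uniqueness of cycle decompositions together with the hypothesis that no cycle is self-symmetric.
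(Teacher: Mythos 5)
Your count is correct and your route is genuinely different from the paper's. The paper fills in placeholders one at a time and observes that the number of choices decreases in steps of two, which produces the factor $(2n)!!$ directly and then cleans up by dividing out the overcounts. You instead first fix the $\tau$-invariant supports $T_1,\dots,T_{\ell(\mu)}$ (a multinomial choice on the $n$ orbits of $\tau$), then count the $\tau$-symmetric pairs of cycles that can live on each $T_i$. This is a cleaner, more structural bijection and makes the origin of each factor in the formula more transparent; it also explains intrinsically why $(2n)!!=2^n n!$ appears rather than arriving there by a sequential recount.

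There is one imprecision worth fixing, though it does not affect the numbers. A cycle $c$ of $\sigma\in B^\sim_\mu$ is paired with the cycle $c'$ satisfying $\tau c\tau = (c')^{-1}$, that is with $c'=\tau c^{-1}\tau$, not with $\tau c\tau$. (The defining relation $\tau\sigma\tau=\sigma^{-1}$ matches the cycles of $\tau\sigma\tau$ with the \emph{inverses} of the cycles of $\sigma$.) Consequently the map you should verify is a bijection is
\[
(S_i,c_i)_{i=1}^{\ell(\mu)}\ \longmapsto\ \sigma=\prod_i c_i\cdot\bigl(\tau c_i^{-1}\tau\bigr),
\]
since the product $\prod_i c_i\cdot(\tau c_i\tau)$ as written commutes with $\tau$ rather than being inverted by it, and therefore lands in the wrong set. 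Because $\tau c\tau$ and $\tau c^{-1}\tau$ have the same support, every step of your support analysis and the counts $2^{\mu_i}$, $(\mu_i-1)!$, and the division by $2$ per pair go through unchanged, so the final formula is correct. But the definition of the pairing and of the assembling map should use $\tau c^{-1}\tau$ throughout.
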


\begin{proof}
We consider an ''empty'' permutation consisting of $2$ cycles of length $\mu_i$ for every $i$ such that each cycle contains $\mu_i$ placeholders. Then we count the possibilities to fill up the placeholders one by one with numbers from $1,\ldots,2n$.

For the first place, we have $2n$ options. 
Having chosen one placeholder to be $j$, the element $\tau(j)$ is not allowed to be in the same cycle. For that reason, if our cycle is not yet completed, we have $2n-2$ options to choose for the next place. If the cycle is completed, the involution $\tau$ requires us to fill up the partner cycle of length $1$ as $(\tau(j))$. Thus, we cannot choose $\tau(j)$ either as we fill up the first placeholder of the next cycle. We thus have $2n-2$ options again.

Recursively, we can see that we have $2n!!$ ways to fill our placeholders with numbers. This is not the right count yet, we have overcounted in multiple ways:
\begin{itemize}
    \item For every cycle and its partner cycle, it does not matter in which way we arranged the cycles, so we have to divide by a factor of $2$ of each entry of $\mu$, i.e.\ by $\frac{1}{2^{\ell(\mu)}}$.
    \item If $\mu$ has automorphisms, using the same argument, it does not matter how we arranged, so we have to divide by $\Aut(\mu)$.
    \item Finally, we overcounted for each cycle a factor of $\mu_i$, as we can cyclically exchange the entries in a cycle without changing the cycle.
    
\end{itemize}
Altogether, we obtain the claimed expression.

\end{proof}

\begin{example}
Let $n=3$ and $\mu=(2,1)$. Then $\tau=(14)(25)(36)$.
The expression above yields $\frac{1}{2^2}\frac{1}{2\cdot 1}6!! = 6$.
The six elements in $B^{\sim}_\mu$ are

$$
    (12)(3)(45)(6), (13)(2)(46)(5),
    (15)(3)(24)(6),
    (16)(2)(34)(5),
    (23)(1)(56)(4),
    (26)(1)(53)(4).
$$

\noindent
Let $n=3$ and $\mu=(3)$. The expression yields $\frac{1}{2}\frac{1}{3}6!!= 8$.
The eight elements in $B^{\sim}_\mu$ are

$$
    (123)(654), (132)(564), (126)(354), (162)(534), (135)(264), (153)(624), (156)(324), (165)(234).
$$

\noindent
Let $n=2$ and $\mu=2$. The $\tau=(13)(24)$. The $\frac{1}{2}\frac{1}{2}4!!=2$ elements in $B^{\sim}_\mu$ are

$$ (12)(34), (14)(23).$$

\end{example}

We are now ready to enumerate twisted factorisations that give rise to the same twisted tropical cover.

\begin{proposition}\label{thm-covermult}
Given a twisted tropical cover $\pi\colon\Gamma\to\mathbb{R}$ with involution $\iota$ of type $(g,\mu,\nu)$. Then, the numbers of twisted factorisations in $C_g(\mu,\nu)$ giving rise to $(\pi,\iota)$ via \cref{con-twist} is
\begin{equation}
(2n)!!\cdot 2^{b}\cdot \prod_V (\omega_V-1)\prod_e\omega(e)\cdot \frac{1}{|\Aut(\pi)|},
\end{equation}
where $b$ is the number of branch points. Here, the first product goes over all $4$-valent vertices of $\Gamma$ and $\omega_V$ denotes the weight of the adjacent edges, while the second product is taken over all internal edges of $\Gamma/\iota$ and $\omega(e)$ denotes their weight.
\end{proposition}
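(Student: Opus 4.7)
The plan is to enumerate, for a fixed representative of $(\pi,\iota)$, the tuples in $C_g(\mu,\nu)$ mapping to this representative under Construction \ref{con-twist} by counting valid labelings of the edges of $\Gamma$ by cycles of the intermediate permutations $\sigma_1$, $\eta_1\sigma_1\tau\eta_1\tau$, $\eta_2\eta_1\sigma_1\tau\eta_1\eta_2\tau$, and so on. The map from tuples to labeled covers is injective, and two distinct labelings of $\Gamma$ yield equivalent labeled covers precisely when they differ by an element of $\Aut(\pi)$; hence the count of tuples corresponding to the equivalence class of $(\pi,\iota)$ equals the total labeling count divided by $|\Aut(\pi)|$.

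First I would count labelings of the $2\ell(\mu)$ left ends: choices of $\sigma_1\in B^\sim_\mu$ together with a bijection between the cycles of $\sigma_1$ and the left ends, compatible with the prescribed edge weights and with the $\iota$/$\tau$ pairing. Combining $|B^\sim_\mu|$ from Lemma \ref{lem-conjclass} with the number of such bijections and accounting for the ambiguities carefully yields the leading factor $(2n)!!$.

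Second, I would proceed inductively over $p_1,\dots,p_b$: with the labels of all edges to the left of $p_i$ fixed, I count admissible transpositions $\eta_i$ (with $\eta_i\neq\tau\eta_i\tau$) whose partner pair $(\eta_i,\tau\eta_i\tau)$ realizes the local tropical picture at $p_i$. The key input is the cut-and-join analysis for partner transpositions of \cite[Theorem 2.12]{BF21}, which partitions precisely into the three cases of Construction \ref{con-twist}. In cases (a) and (b) the two 3-valent preimages of $p_i$ contribute an overall factor of $2$ together with weight factors for the incident quotient edges; in case (c) the single 4-valent preimage $V$ contributes $2(\omega_V-1)\omega_V$, splitting naturally as a factor of $2$, the vertex factor $(\omega_V-1)$, and one adjacent-edge weight $\omega_V$. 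Multiplying these contributions over all branch points assembles into $2^b\cdot\prod_V(\omega_V-1)\prod_e\omega(e)$, where the first product is over $4$-valent vertices of $\Gamma$ and the second over internal edges of $\Gamma/\iota$.

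The main obstacle is case (c) of the inductive step: one must carefully identify the transpositions $\eta_i$ for which $\eta_i$ joins the $\tau$-symmetric pair of length-$\omega_V$ cycles into a single $2\omega_V$-cycle in such a way that $\tau\eta_i\tau$ then splits this cycle back into a new $\tau$-symmetric pair of length-$\omega_V$ cycles. Extracting the exact count $(\omega_V-1)\omega_V$ requires tracking how $\tau$ acts on the elements of the merged $2\omega_V$-cycle, which is precisely the content of \cite[Theorem 2.12]{BF21}. The remaining bookkeeping, namely the left-end count, the 3-valent sub-cases, and the automorphism quotient, is then essentially elementary.
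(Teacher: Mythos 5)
Your overall strategy mirrors the paper's proof: count admissible left-end labelings using Lemma~\ref{lem-conjclass}, propagate through the branch points using the cut-and-join analysis of \cite[Theorem~2.12]{BF21} according to the three cases of Construction~\ref{con-twist}, and quotient by $\Aut(\pi)$. Identifying the $4$-valent case as the genuinely new ingredient is also the right emphasis. However, the way you allocate the factors between the two stages is wrong, and the final formula only comes out because two errors of the same size cancel.

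Concretely, the left-end count does \emph{not} give $(2n)!!$. By Lemma~\ref{lem-conjclass} one has $|B^\sim_\mu| = \frac{1}{2^{\ell(\mu)}}\cdot\frac{1}{\prod\mu_i}\cdot\frac{1}{|\Aut(\mu)|}(2n)!!$, and the number of $\iota$-compatible, weight-preserving assignments of the cycles of $\sigma_1$ to the left ends is $2^{\ell(\mu)}|\Aut(\mu)|$ (counted with the paper's deliberate overcounting at balanced forks). The product of these two is $\frac{(2n)!!}{\prod\mu_i}$. Likewise, the branch-point contributions do not assemble into $2^b\prod_V(\omega_V-1)\prod_e\omega(e)$ with $e$ ranging over \emph{internal} edges of $\Gamma/\iota$ only. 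The cut-and-join counts of \cite[Theorem~2.12]{BF21} at a vertex are expressed in terms of the lengths of the cycles arriving from the left, i.e.\ the weights of the incoming quotient edges: $2\omega(\bar e_1)\omega(\bar e_2)$ for a join, $2\omega(\bar e_1)$ for a cut, and $2\omega_V(\omega_V-1)$ for a $4$-valent vertex. Each quotient edge is therefore picked up exactly once, at the branch point to its right, and this includes the $\ell(\mu)$ incoming quotient ends of weights $\mu_i$. So the product over branch points is actually $2^b\prod_V(\omega_V-1)\cdot\prod_i\mu_i\cdot\prod_{\mathrm{internal}}\omega(e)$. Only after multiplying with $\frac{(2n)!!}{\prod\mu_i}$ does the spurious $\prod\mu_i$ cancel. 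You need to make this cancellation explicit; as written, both intermediate factorizations in your proposal are off by a factor of $\prod\mu_i$.

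One further caution: the assertion that two labelings yield equivalent labeled covers precisely when they differ by an element of $\Aut(\pi)$, and hence that one may simply divide by $|\Aut(\pi)|$, is doing substantial work and is not just ``essentially elementary'' bookkeeping. The paper verifies it by identifying $\Aut(\pi)$ as the product of $\mathbb{Z}_2$-factors generated by swaps of: balanced forks of incoming ends, outgoing pairs of a cut with indistinguishable evolution, and outgoing pairs of a $4$-valent vertex; and then matching each generator against exactly one of the deliberate $2$-fold overcounts introduced earlier. That matching is what makes the quotient by $|\Aut(\pi)|$ legitimate.
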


\begin{proof}
We count the numbers of twisted factorisations $(\sigma_1,\eta_1,\dots,\eta_b,\sigma_2)\in C_g(\mu,\nu)$ that give rise to $(\pi,\iota)$ via \cref{con-twist}. To begin with, we count the number of first permutations $\sigma$ that may be assigned to the incoming ends. By \cref{lem-conjclass}, this number is equal to
\begin{equation}
    \frac{1}{2^{\ell(\mu)}} \frac{1}{\prod \mu_i} \frac{1}{|\Aut(\mu)|}(2n)!! .
\end{equation}
We fix such a $\sigma$  and follow \cref{con-twist}. First, we label the edges over $[-\infty,p_1]$ by the cycles of $\sigma$ that is in accordance with $\iota$. There are $2^{\ell(\mu)}\cdot |\mathrm{Aut}(\mu)|$ ways to do this: 
First, each edge can be labeled with a cycle or its conjugation under $\tau$, leading to $2^{\ell(\mu)}$ possibilities for labeling.
Furthermore, if we have incoming edges of the same weight, i.e.\ automorphisms of $\mu$, there are a priori $|\Aut(mu)|$ possibilities to permute the cycles for these edges; except if two incoming edges end at the same vertex. Then these two edges are not distinguishable. We nevertheless temporarily record a factor of $2^{\ell(\mu)}\cdot |\mathrm{Aut}(\mu)|$ and keep in mind that we overcounted with a factor of $2$ for each pair  of incoming ends adjacent to the same vertex. The overcounting factors will be cancelled later, when we divide by $|\Aut(\pi)|$, as described below.

We now follow the cut-and-join analysis of \cite[Theorem 2.12]{BF21} to count the number of transpositions that give rise to $(\Gamma,\iota)$ by starting with $\sigma$. We consider the first branch point $p_1$. There are three cases by \cref{con-twist}.\\
In the first case, we have two $3$-valent vertices mapping to $p_1$ that perform a simultaneous join of two pairs of edges $e_1,e_2$ adjacent to the first vertex and $e_3,e_4$ adjacent to the second vertex. Given such a pair of vertices and fixing cycles $\sigma_k^l$ for the edges left of the vertices, there are $2\ell(\sigma_1^i)\ell(\sigma_1^j)=2\omega(e_1)\omega(e_2)= 2\omega(e_3)\omega(e_4)$ transposition $\eta_1$ joining $\sigma_1^i$ with $\sigma_1^j$ and the other two cycles with each other, where we use the notation of Construction \ref{con-twist}.

In the second case, we again have two $3$-valent vetices mapping to $p_1$ that perform a simultaneous cut of two edges $e_1$ and $e_2$ into $e_3,e_4$ and $e_5,e_6$ respectively.  Using again the notation of Construction \ref{con-twist} and \cite[Theorem 2.12]{BF21}, we obtain the following count of transpositions for fixed cycles corresponding to the edges left of the vertices: If $\ell(\sigma_1^{1,a})\neq\ell(\sigma_1^{1,b})$, there are $2\ell(\sigma_1^1)$ many transpositions $\eta_1$ giving rise to this cut. If $\ell(\sigma_1^{1,a})=\ell(\sigma_1^{1,b})$, this number is $\ell(\sigma_1^1)$.
    Since the lengths of the cycles are turned into the weights of the edges of the twisted tropical cover, this can be rephrased as follows:    If $\omega(e_3)\neq\omega(e_4)$, there are $2\omega(e_1)=2\omega(e_2)$ corresponding transpositions $\eta_1$. If $\omega(e_3)=\omega(e_4)$, this number is $\omega(e_1)=\omega(e_2)$. If $e_3$ and $e_4$ have distinguishable evolution in the tropical cover, it matters which cycle takes which path. Therefore, in this case, we have a further contribution of a factor $2$. If the evolution of $e_3$ and $e_4$ is identical, it does not matter which cycle takes which path. Nevertheless, we momentarily insist on overcounting with a factor of $2$ as usual, but remark that an identical evolution     corresponds to a factor of $\mathbb{Z}_2$ in the automorphism group of the tropical cover. Hence, dividing by $\frac{1}{|\Aut(\pi)|}$ later will cancel the overcounting with a factor of $2$ also in the case of identical evolution.\\
In the third case, we have a $4$-valent vertex that performs a join and cut of two incoming edges to two outgoing edges all of the same weight. Using the notation of Construction \ref{con-twist} and \cite[Theorem 2.12]{BF21}, we obtain the following count of transpositions for fixed cycles corresponding to the edges left of the vertices:
There are $\ell(\sigma_1^1)(\ell(\sigma_1^1)-1)$ transpositions $\eta_1$ giving rise to this picture. Since the lengths of the cycles are turned into the weights of the edges of the twisted tropical cover, this can again be rephrased as follows:
There are $\omega(e)(\omega(e)-1)$ many transpositions $\eta_1$ to account for, where $e$ is one of the edges adjacent to the vertex. As in the second case, it matters which cycle corresponding to an outgoing edge takes which path. Thus, we obtain a factor $2$. Again, if the evolution of the two outgoing edges is identical, this corresponds to a non-trivial automorphism of the tropical cover, so the overcounting by $2$ will be cancelled later.\\
Next, we consider the automorphisms of the tropical cover. We have already discussed that two edges obtained from a cut resp.\ from a $4$-valent vertex with indistinguishable evolution correspond to factors of $\mathbb{Z}_2$ in the  automorphism  group.

Since we have only $3$ and $4$-valent vertices (with two incoming and two outgoing edges), any non-trivial automorphism  must exchange pairs of edges with the same image. Accordingly, the automorphism group is a product of factors of $\mathbb{Z}_2$. If we follow the evolution of our cover from the left to the right, then any pair of edges which can lead to a factor of $\mathbb{Z}_2$ comes from a cut or a $4$-valent vertex whose outgoing edges have indistinguishable evolution, except of pairs of incoming ends. The  factors coming from a cut or a $4$-valent vertex are already taken into account, since they cancel with our overcounting for the vertex contributions from above. We only need to discuss pairs of incoming edges which lead to a factor of $\mathbb{Z}_2$ in the automorphism group, because they have the same end vertex.
But these factors are canceled because of the overcounting we performed when we counted the possibilities to label the incoming edges, as mentioned before.\\
To summarise, we obtain the the desired number of twisted factorisations giving rise to $(\pi,\iota)$.
\end{proof}

As the number of tuples in the symmetric group yielding a fixed twisted tropical cover which we determined in Proposition \ref{thm-covermult} exactly equals the multiplicity with which we count a twisted tropical cover in the definition of tropical twisted double Hurwitz number (see Definition \ref{def:twisthurdou}) (up to the global factor $\frac{1}{(2n)!!}$), and since all twisted tropical covers of type $(g,\mu,\nu)$ arise in this manner, we have now proved the following theorem, which we consider the main result of this section.

\begin{theorem}[Correspondence Theorem]
\label{thm-corres}
For $g$ a non-negative integer and $\mu,\nu$ partitions of the same positive integer, the twisted double Hurwitz number of Definition \ref{def:twisthurdou} coincides with the tropical twisted double Hurwitz number of Definition \ref{def:twisttrophn}:
$$\tilde{h}_g(\mu,\nu)=\tilde{h}^{\trop}_g(\mu,\nu).$$
\end{theorem}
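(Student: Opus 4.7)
The plan is to view \cref{con-twist} as a many-to-one map $\Phi\colon C_g(\mu,\nu)\to \mathcal{T}_g(\mu,\nu)$, where $\mathcal{T}_g(\mu,\nu)$ denotes the set of equivalence classes of twisted tropical covers of type $(g,\mu,\nu)$, and then to compute $|C_g(\mu,\nu)|$ by partitioning the domain according to the image. Well-definedness of $\Phi$ is supplied by \cref{lem-outcomeconstruction}, so the task reduces to verifying that $\Phi$ is surjective and determining its fibre sizes.

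For surjectivity, I would argue as follows. Given a class $[\pi\colon\Gamma\to\mathbb{R}]$ with involution $\iota$, pick any $\iota$-compatible labelling of the $2\ell(\mu)$ incoming ends by the cycles of some $\sigma_1\in B^{\sim}_\mu$; such a labelling exists by \cref{lem-conjclass}. Processing the branch points from left to right and inverting the local case analysis of \cref{con-twist} at each vertex then produces admissible transpositions $\eta_i$ (with $\eta_i\neq\tau\eta_i\tau$) step by step, using the cut-and-join dictionary of \cite[Theorem 2.12]{BF21}. The resulting tuple $(\sigma_1,\eta_1,\ldots,\eta_b,\sigma_2)$ satisfies conditions (1)--(4) of \cref{def:twisthurdou} by construction, and transitivity condition (5) is inherited from the connectedness of $\Gamma$.

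With surjectivity in hand, the fibre sizes are exactly the content of \cref{thm-covermult}, which yields
\begin{equation*}
|\Phi^{-1}([\pi])|=(2n)!!\cdot 2^{b}\cdot\prod_{V}(\omega_V-1)\prod_{e}\omega(e)\cdot\frac{1}{|\Aut(\pi)|}.
\end{equation*}
This is precisely $(2n)!!$ times the multiplicity assigned to $[\pi]$ in \cref{def:twisttrophn}. Summing over all equivalence classes and applying the normalisation of \cref{def:twisthurdou} then completes the argument:
\begin{equation*}
\tilde{h}_g(\mu,\nu)=\frac{1}{(2n)!!}\sum_{[\pi]\in\mathcal{T}_g(\mu,\nu)}|\Phi^{-1}([\pi])|=\sum_{[\pi]}2^{b}\cdot\frac{1}{|\Aut(\pi)|}\prod_V(\omega_V-1)\prod_e\omega(e)=\tilde{h}^{\trop}_g(\mu,\nu).
\end{equation*}

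The substantive work is already packaged into \cref{thm-covermult}; the only delicate point I would re-examine is the interplay between the freedom in choosing an initial $\iota$-compatible labelling of the incoming ends and the automorphism denominator $|\Aut(\pi)|$, to make sure that the overcounting arising from labelling pairs of incoming ends that share an end vertex is precisely cancelled by the corresponding $\mathbb{Z}_2$-factors of $\Aut(\pi)$, exactly as sketched at the end of the proof of \cref{thm-covermult}. Once this bookkeeping is spelled out, the theorem is immediate.
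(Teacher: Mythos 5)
Your proposal is correct and takes essentially the same route as the paper: the paper itself proves the theorem by combining Lemma \ref{lem-outcomeconstruction} (well-definedness of the map induced by Construction \ref{con-twist}), the observation that every twisted tropical cover arises this way, and Proposition \ref{thm-covermult} for the fibre sizes, which the normalisation $\frac{1}{(2n)!!}$ then turns into the tropical multiplicities. Your write-up merely makes the surjectivity step and the partition-by-fibres bookkeeping explicit, which the paper states in a single sentence.
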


As shown in the next chapters, the Correspondence Theorem \ref{thm-corres} allows to use a graphical way of determining twisted Hurwitz numbers, and it can be applied to deduce structural properties such as piecewise polynomiality.

\section{A graphical count of twisted Hurwitz numbers}
\label{sec:monodr}
We distill the combinatorial essence of the count of twisted tropical covers: the metric of the tropical source curve is imposed by the distances of the branch points in the image. Thus, length data does not have to be specified and we can merely count the combinatorial types of tropical twisted covers. This is specified in terms of monodromy graphs.

We can thus use the tropical approach to provide an easy graphical way to determine twisted double Hurwitz numbers.

\begin{definition}[Twisted monodromy graphs]
\label{def-mongraph}
For fixed $g$ and two partitions $\mu$ and $\nu$ of $n$, a graph $\Gamma$ is a \emph{twisted monodromy graph of type $(g, \mu,\nu)$}  if:
\begin{enumerate}
\item $\Gamma$ is a connected, genus $g$, directed graph. 
\item $\Gamma$ has $3$-valent and $4$-valent vertices.
\item $\Gamma$ has an involution whose fixed points are the $4$-valent vertices.
\item $\Gamma$ has $2\ell(\mu)$ inward ends of weights $\mu_i$. Ends which are mapped to each other under the involution have the same weight.
\item $\Gamma$ has $2\ell(\nu)$ outward ends of weights $\nu_i$. Ends mapped to each other via the involution have the same weight.
\item $\Gamma$ does not have sinks or sources.
\item The inner vertices are ordered compatibly with the partial ordering induced by the directions of the edges.
\item Every bounded edge $e$ of the graph is equipped with a weight $w(e)\in \mathbb{N}$. These satisfy the \emph{balancing condition} at each inner vertex: the sum of all weights of  incoming  edges equals the sum of the weights of all outgoing edges.
\item The four edges adjacent to a $4$-valent vertex all have the same weight.
\item The involution is compatible with the weights.
\end{enumerate}
\end{definition}

\begin{corollary}\label{thm-monodromygraphs}
For genus $g$ and two partitions $\mu$ and $\nu$ of $n$, the twisted Hurwitz number $\tilde{h}_g(\mu,\nu)$ equals the weighted sum of all twisted monodromy graphs of type $(g,\mu,\nu)$, where each graph is weighted as follows. Let $b$ denote the number of branch points. 
For a $4$-valent vertex $V$, let $\omega_V$ denote the weight of its $4$ adjacent edges. For an edge $e$, let $\omega(e)$ denote its weight. 

Then we have

$$\tilde{h}_g(\mu,\nu)=\sum_{\Gamma} 2^{b}\cdot \prod_V (\omega_V-1)\prod_e\omega(e)\cdot \frac{1}{|\Aut(\pi)|},$$
where the sum goes over all twisted monodromy graphs $\Gamma$ of type $(g,\mu,\nu)$, the first product goes over all $4$-valent vertices of $\Gamma$ and the second over all pairs of internal edges of $\Gamma$, paired up by the involution.

\end{corollary}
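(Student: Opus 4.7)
The plan is to derive this corollary as a combinatorial repackaging of the Correspondence Theorem \ref{thm-corres} together with Definition \ref{def:twisttrophn}. By \cref{thm-corres} it suffices to prove $\tilde{h}_g(\mu,\nu) = \tilde{h}^{\trop}_g(\mu,\nu)$ equals the weighted sum over twisted monodromy graphs. The main task is therefore to exhibit a bijection between equivalence classes of twisted tropical covers $\pi\colon \Gamma_1 \to \Gamma_2$ of type $(g,\mu,\nu)$ and twisted monodromy graphs of type $(g,\mu,\nu)$, under which the multiplicities in \cref{def:twisttrophn} and the one stated in the corollary coincide.

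First, I would construct the forward map. Given a twisted tropical cover $\pi\colon \Gamma_1 \to \Gamma_2$, I forget the length function on $\Gamma_1$ but retain: the underlying graph, the edge weights $\omega(e)$, the involution $\iota$, the direction on each edge induced by the orientation of $\Gamma_2 = \mathbb{R}$, and the partial ordering of interior vertices induced by which branch point $p_i$ they lie over. I would then verify conditions (1)--(10) of \cref{def-mongraph} one by one. Most are immediate from \cref{def-twistedtropcover}: connectedness, genus, the $3$- and $4$-valent vertex condition, the involution fixing exactly the $4$-valent vertices, the number and weights of ends, the equal weights at a $4$-valent vertex, and the compatibility of $\iota$ with weights. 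The balancing condition on weights is precisely the harmonicity of $\pi$, and this in turn guarantees the absence of sinks and sources: a vertex over $p_i$ has incoming edges (weighted by the cycle lengths arriving from the left) of positive total weight, hence also outgoing edges of the same total weight.

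Next, I would construct the inverse. Given a twisted monodromy graph $\Gamma$, pick any real numbers $p_1 < \dots < p_b$, let $\Gamma_2$ be $\mathbb{R}$ subdivided by these, and define the length of an internal edge $e$ of $\Gamma$ as $\ell(e) = \ell(\pi(e))/\omega(e)$, with ends of infinite length. This metric together with the directed combinatorial data defines a harmonic surjective map $\pi\colon \Gamma \to \Gamma_2$; the involution and ramification data carry over verbatim, yielding a twisted tropical cover of type $(g,\mu,\nu)$. Different choices of the $p_i$ produce equivalent covers via the obvious rescaling isomorphism, so the inverse is well defined on equivalence classes. The two constructions are visibly mutually inverse, and automorphisms correspond on both sides because an automorphism of $\pi$ must preserve the combinatorial data and, conversely, a combinatorial automorphism of $\Gamma$ respecting directions, weights, and $\iota$ lifts uniquely to an isometric self-map of the metric graph.

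Finally, I would match the multiplicities: in \cref{def:twisttrophn} the product $\prod_e \omega(e)$ runs over internal edges of the quotient $\Gamma/\iota$, which are exactly the $\iota$-orbits of internal edges of $\Gamma$, i.e., pairs of internal edges identified under the involution together with edges fixed by $\iota$ (there are none, since $\iota$ fixes only the $4$-valent vertices, not edges, by \cref{def-twistedtropcover}). This matches the wording of the corollary. The factors $2^b$, $(\omega_V - 1)$ at each $4$-valent vertex, and $1/|\Aut(\pi)|$ are identical in both formulations. The main (and only) obstacle is the careful verification that the bijection is honestly well-defined on equivalence classes and that $\Aut$ on the two sides coincide; once that is in place, the corollary follows by substituting the monodromy-graph enumeration for $\tilde{h}^{\trop}_g(\mu,\nu)$ in \cref{thm-corres}.
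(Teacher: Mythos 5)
Your proposal takes the same route as the paper: establish a bijection between equivalence classes of twisted tropical covers and twisted monodromy graphs by forgetting/reconstructing the metric and the map to the line, check that multiplicities agree on both sides, and then invoke the Correspondence Theorem \ref{thm-corres}. You spell out more of the verification (the monodromy-graph axioms, the explicit inverse metric, the matching of automorphism groups) than the paper's terse argument, but the underlying idea is identical.
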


\begin{proof}
By mapping the vertices of a twisted monodromy graph to a line according to their order, each monodromy graph gives rise to a twisted tropical cover. The genus $g$ and the labeling $\mu$, $\nu$ of the ends are given by the type of the monodromy graph. Vice versa, each twisted tropical cover of genus $g$ with ends labeled by $\mu$ and $\nu$ gives rise to a monodromy graph of type $(g,\mu\nu)$ by forgetting lengths of edges and the map to the line. Thus the statement follows from Theorem \ref{thm-corres}. 

\end{proof}

\begin{corollary}\label{cor-monodromygraphs}
We can simplify the count in Corollary \ref{thm-monodromygraphs} as
$$\tilde{h}_g(\mu,\nu)=\sum_{\Gamma} o(\Gamma) \cdot 2^{b}\cdot \prod_V (\omega_V-1)\prod_e\omega(e)\cdot \frac{1}{|\Aut(\pi)|},$$
where now the sum goes over all twisted monodromy graphs without a fixed vertex ordering and $o(\Gamma)$ denotes the number of vertex orderings which are compatible with the edge directions.
\end{corollary}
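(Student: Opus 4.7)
The plan is to reorganize the sum from Corollary \ref{thm-monodromygraphs} by grouping ordered twisted monodromy graphs according to their underlying unordered graph. The crucial observation is that the multiplicity $2^b \prod_V (\omega_V-1) \prod_e \omega(e)$ depends only on the data of weights and valencies attached to vertices and edges of the graph, and not on the compatible vertex ordering required in Definition \ref{def-mongraph}(7), which is merely a linear extension of the partial order induced by the edge directions. Hence for each unordered $\Gamma$ (a graph satisfying all conditions of Definition \ref{def-mongraph} except (7)), the weight factor is constant across the set $O(\Gamma)$ of compatible orderings, which by definition has cardinality $o(\Gamma)$.

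For each $\sigma \in O(\Gamma)$ one obtains an ordered monodromy graph and, via the bijection used in the proof of Corollary \ref{thm-monodromygraphs}, a corresponding twisted tropical cover $\pi_\sigma$; every ordered monodromy graph arises this way from a unique unordered $\Gamma$. The automorphism group $\Aut(\Gamma)$ of the unordered graph (isomorphisms preserving weights, edge directions and the involution, but not any ordering) acts on $O(\Gamma)$, and orbits correspond to isomorphism classes of the resulting ordered monodromy graphs. The stabilizer of $\sigma$ consists of those graph automorphisms that additionally preserve the ordering, which is precisely $\Aut(\pi_\sigma)$. The orbit-stabilizer theorem then yields
\[
\sum_{[\sigma]} \frac{1}{|\Aut(\pi_\sigma)|} \;=\; \frac{o(\Gamma)}{|\Aut(\Gamma)|},
\]
where the sum runs over orbits of $\Aut(\Gamma)$ on $O(\Gamma)$. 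Plugging this identity into Corollary \ref{thm-monodromygraphs} gives the claimed formula, with the denominator $|\Aut(\pi)|$ to be read as $|\Aut(\Gamma)|$ for the unordered graph.

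The main step requiring care is this bookkeeping of automorphisms: one must check that ``summing over ordered monodromy graphs up to isomorphism'' on the left-hand side matches ``summing over unordered graphs weighted by $o(\Gamma)/|\Aut(\Gamma)|$'' on the right-hand side, and this is exactly the content of the orbit-stabilizer identity above. Once this correspondence is in place, no further combinatorial analysis is needed and the corollary follows immediately from Corollary \ref{thm-monodromygraphs}.
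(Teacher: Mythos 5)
Your proof is correct, and it supplies a level of care that the paper itself omits: the corollary is stated there without an explicit argument, as an ``obvious'' reorganization of the sum. The substantive content you add is the orbit–stabilizer bookkeeping, and in particular the observation that, once the sum is taken over unordered monodromy graphs, the factor $\frac{1}{|\Aut(\pi)|}$ in the displayed formula must be read as $\frac{1}{|\Aut(\Gamma)|}$, the automorphism group of the unordered graph (respecting weights, directions, and the involution). This is not merely a pedantic relabeling: for an ordered cover $\pi_\sigma$, $\Aut(\pi_\sigma)$ consists of automorphisms fixing every vertex, so when $\Gamma$ has a non-trivial automorphism permuting orderings one has $|\Aut(\Gamma)| > |\Aut(\pi_\sigma)|$, and using the wrong group would overcount by exactly the factor you identify. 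One can sanity-check this against the example following the corollary: the top-right graph of Figure~5 has $o(\Gamma)=2$ and a graph automorphism exchanging the two middle vertices; with $|\Aut(\Gamma)|=2\cdot|\Aut(\pi_\sigma)|$ the formula of Corollary~\ref{cor-monodromygraphs} reproduces the value $4$ obtained in the paper via Corollary~\ref{thm-monodromygraphs}. Your identity
\[
\sum_{[\sigma]}\frac{1}{|\Aut(\pi_\sigma)|}=\frac{o(\Gamma)}{|\Aut(\Gamma)|}
\]
is exactly the bridge between the two corollaries, and the rest of your argument (constancy of the local weight factor across orderings, since it depends only on edge weights and vertex valencies) is routine and correct.
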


\begin{example}
We compute $\tilde{h}_1((4),(2,2))=160$.

\begin{figure}
    \centering

\tikzset{every picture/.style={line width=0.75pt}} 

\begin{tikzpicture}[scale=0.7,x=0.75pt,y=0.75pt,yscale=-1,xscale=1]

\draw  [dash pattern={on 0.84pt off 2.51pt}]  (14.65,68.89) -- (313.15,66.41) ;
\draw [line width=1.5]    (27.69,40.37) .. controls (68.29,38.83) and (109.79,51.17) .. (124.31,68.58) ;
\draw [line width=1.5]    (30.76,100.18) .. controls (70.67,98.64) and (111.78,87.42) .. (124.31,68.58) ;
\draw [line width=1.5]    (124.31,68.58) .. controls (143.42,38.2) and (173.17,36.14) .. (192,68.02) ;
\draw [line width=1.5]    (124.31,68.58) .. controls (144.44,97.42) and (172.21,97.8) .. (192,68.02) ;
\draw [line width=1.5]    (192,68.02) .. controls (205.22,49.78) and (217.88,37.58) .. (246.3,37.35) ;
\draw [line width=1.5]    (192,68.02) .. controls (205.86,86.64) and (218.92,98.01) .. (247.35,97.78) ;
\draw [line width=1.5]    (246.3,37.35) .. controls (251.51,25.22) and (265.62,19.06) .. (291.33,18.24) ;
\draw [line width=1.5]    (246.3,37.35) .. controls (251.93,49.39) and (264.89,55.33) .. (291.97,55.1) ;
\draw [line width=1.5]    (247.35,97.78) .. controls (252.56,85.65) and (266.67,79.49) .. (292.38,78.67) ;
\draw [line width=1.5]    (247.35,97.78) .. controls (252.97,109.82) and (265.94,115.76) .. (293.01,115.53) ;
\draw  [dash pattern={on 0.84pt off 2.51pt}]  (351.21,225.06) -- (643.18,225.06) ;
\draw [line width=1.5]    (364.45,198.93) .. controls (404.18,197.82) and (444.56,209.5) .. (458.47,225.62) ;
\draw [line width=1.5]    (366.44,253.98) .. controls (405.5,252.86) and (445.89,242.86) .. (458.47,225.62) ;
\draw [line width=1.5]    (458.47,225.62) .. controls (471.05,210.05) and (484.29,203.38) .. (519.38,203.94) ;
\draw [line width=1.5]    (458.47,225.62) .. controls (469.72,242.86) and (486.27,248.97) .. (519.38,247.86) ;
\draw [line width=1.5]    (519.38,203.94) .. controls (531.29,182.25) and (584.92,170.02) .. (628.62,170.02) ;
\draw [line width=1.5]    (519.38,203.94) -- (576.31,225.06) ;
\draw [line width=1.5]    (519.38,247.86) -- (576.31,225.06) ;
\draw [line width=1.5]    (519.38,247.86) .. controls (530.63,268.99) and (586.24,275.1) .. (629.94,275.1) ;
\draw [line width=1.5]    (576.31,225.06) .. controls (590.22,209.5) and (598.82,203.38) .. (629.94,203.94) ;
\draw [line width=1.5]    (576.31,225.06) .. controls (589.55,242.3) and (598.82,246.75) .. (629.94,247.3) ;
\draw  [dash pattern={on 0.84pt off 2.51pt}]  (340.64,66.51) -- (643.75,66.51) ;
\draw [line width=1.5]    (354.38,40.39) -- (409.37,40.39) ;
\draw [line width=1.5]    (354.38,95.41) -- (409.37,95.41) ;
\draw [line width=1.5]    (409.37,40.39) -- (471.91,67.07) ;
\draw [line width=1.5]    (409.37,95.41) -- (471.91,67.07) ;
\draw [line width=1.5]    (471.91,67.07) -- (517.62,52.62) ;
\draw [line width=1.5]    (409.37,40.39) .. controls (431.36,29.83) and (554.39,50.95) .. (555.08,67.07) ;
\draw [line width=1.5]    (409.37,95.41) .. controls (436.86,103.75) and (555.08,84.3) .. (555.08,67.07) ;
\draw [line width=1.5]    (471.91,67.07) -- (517.62,82.07) ;
\draw [line width=1.5]    (527.93,49.28) -- (629.66,17.05) ;
\draw [line width=1.5]    (526.9,85.96) -- (628.62,118.19) ;
\draw [line width=1.5]    (555.08,67.07) -- (630,90.41) ;
\draw [line width=1.5]    (555.08,67.07) -- (630,44.84) ;
\draw  [dash pattern={on 0.84pt off 2.51pt}]  (8.97,379.02) -- (321,379.02) ;
\draw [line width=1.5]    (23.12,341.8) -- (79.72,341.8) ;
\draw [line width=1.5]    (23.12,420.2) -- (79.72,420.2) ;
\draw [line width=1.5]    (79.72,341.8) .. controls (108.03,318.04) and (151.89,333.09) .. (157.56,349.72) ;
\draw [line width=1.5]    (79.72,420.2) .. controls (108.73,444.76) and (151.89,427.33) .. (157.56,413.08) ;
\draw [line width=1.5]    (118.64,384.96) -- (79.72,420.2) ;
\draw [line width=1.5]    (157.56,349.72) -- (129.96,375.06) ;
\draw [line width=1.5]    (79.72,341.8) -- (157.56,413.08) ;
\draw [line width=1.5]    (157.56,349.72) -- (221.94,349.72) ;
\draw [line width=1.5]    (157.56,413.08) -- (221.94,413.08) ;
\draw [line width=1.5]    (220.96,349.98) .. controls (239.12,339.16) and (250.37,334.91) .. (291.01,335.29) ;
\draw [line width=1.5]    (220.96,349.98) .. controls (238.26,361.95) and (250.37,365.04) .. (291.01,365.43) ;
\draw [line width=1.5]    (220.96,413.34) .. controls (239.12,402.52) and (250.37,398.27) .. (291.01,398.65) ;
\draw [line width=1.5]    (220.96,413.34) .. controls (238.26,425.31) and (250.37,428.4) .. (291.01,428.79) ;
\draw  [dash pattern={on 0.84pt off 2.51pt}]  (13.65,225.49) -- (319.39,225.49) ;
\draw [line width=1.5]    (27.52,199.92) -- (82.98,199.92) ;
\draw [line width=1.5]    (27.52,253.79) -- (82.98,253.79) ;
\draw [line width=1.5]    (82.98,199.92) -- (144.68,226.04) ;
\draw [line width=1.5]    (82.98,253.79) -- (144.68,226.04) ;
\draw [line width=1.5]    (82.98,199.92) -- (296.52,176.52) ;
\draw [line width=1.5]    (82.98,253.79) -- (295.82,273.92) ;
\draw [line width=1.5]    (144.68,226.04) .. controls (179.35,208.62) and (207.08,209.17) .. (234.81,225.49) ;
\draw [line width=1.5]    (144.68,226.04) .. controls (179.35,241.82) and (208.47,242.36) .. (234.81,225.49) ;
\draw [line width=1.5]    (234.81,225.49) -- (296.52,253.25) ;
\draw [line width=1.5]    (234.81,225.49) -- (297.21,197.74) ;
\draw  [dash pattern={on 0.84pt off 2.51pt}]  (351.47,378.98) -- (659,378.98) ;
\draw [line width=1.5]    (365.42,344.49) -- (421.2,344.49) ;
\draw [line width=1.5]    (365.42,417.14) -- (421.2,417.14) ;
\draw [line width=1.5]    (421.2,344.49) -- (635.99,312.94) ;
\draw [line width=1.5]    (421.2,417.14) -- (635.29,444.29) ;
\draw [line width=1.5]    (421.2,344.49) -- (489.54,362.84) ;
\draw [line width=1.5]    (421.2,417.14) -- (490.24,395.13) ;
\draw [line width=1.5]    (489.54,362.84) .. controls (507.67,348.16) and (534.87,345.23) .. (557.88,346.7) ;
\draw [line width=1.5]    (490.24,395.13) .. controls (510.46,407.6) and (534.87,412.74) .. (559.28,410.54) ;
\draw [line width=1.5]    (490.24,395.13) -- (557.88,346.7) ;
\draw [line width=1.5]    (489.54,362.84) -- (507.67,375.31) ;
\draw [line width=1.5]    (518.83,383.39) -- (559.28,410.54) ;
\draw [line width=1.5]    (557.88,346.7) -- (634.59,346.7) ;
\draw [line width=1.5]    (559.28,410.54) -- (635.99,410.54) ;
\draw  [dash pattern={on 0.84pt off 2.51pt}]  (18.58,545.3) -- (305.98,545.3) ;
\draw [line width=1.5]    (31.61,515.26) -- (83.75,515.26) ;
\draw [line width=1.5]    (31.61,578.53) -- (83.75,578.53) ;
\draw [line width=1.5]    (83.75,515.26) .. controls (109.82,496.09) and (114.38,496.09) .. (146.97,496.09) ;
\draw [line width=1.5]    (146.97,496.09) -- (292.29,494.17) ;
\draw [line width=1.5]    (146.97,496.09) -- (220.61,520.37) ;
\draw [line width=1.5]    (220.61,520.37) -- (290.34,520.37) ;
\draw [line width=1.5]    (83.75,578.53) .. controls (108.52,597.7) and (115.68,597.06) .. (148.27,597.06) ;
\draw [line width=1.5]    (148.27,597.06) -- (291.64,596.42) ;
\draw [line width=1.5]    (148.27,597.06) -- (219.3,572.14) ;
\draw [line width=1.5]    (219.3,572.14) -- (290.99,572.14) ;
\draw [line width=1.5]    (83.75,515.26) -- (219.3,572.14) ;
\draw [line width=1.5]    (83.75,578.53) -- (152.18,549.45) ;
\draw [line width=1.5]    (170.43,541.46) -- (220.61,520.37) ;
\draw  [dash pattern={on 0.84pt off 2.51pt}]  (352.84,543.8) -- (648.11,543.8) ;
\draw [line width=1.5]    (366.23,512.43) -- (419.79,512.43) ;
\draw [line width=1.5]    (366.23,578.51) -- (419.79,578.51) ;
\draw [line width=1.5]    (419.79,512.43) .. controls (446.57,492.4) and (485.41,497.74) .. (491.43,517.1) ;
\draw [line width=1.5]    (419.79,578.51) .. controls (445.24,598.54) and (484.74,591.19) .. (492.1,572.5) ;
\draw [line width=1.5]    (419.79,512.43) -- (492.1,572.5) ;
\draw [line width=1.5]    (419.79,578.51) -- (455.61,547.81) ;
\draw [line width=1.5]    (465.32,539.8) -- (491.43,517.1) ;
\draw [line width=1.5]    (491.43,517.1) -- (573.79,517.1) ;
\draw [line width=1.5]    (492.1,572.5) -- (574.46,572.5) ;
\draw [line width=1.5]    (573.79,517.1) .. controls (590.53,497.74) and (607.93,498.41) .. (639.4,497.08) ;
\draw [line width=1.5]    (574.46,572.5) .. controls (587.18,593.2) and (611.28,589.19) .. (640.07,591.86) ;
\draw [line width=1.5]    (574.46,572.5) .. controls (591.2,553.15) and (608.6,552.48) .. (640.07,551.14) ;
\draw [line width=1.5]    (573.79,517.1) .. controls (586.51,537.79) and (610.61,533.79) .. (639.4,536.46) ;

\draw (10.41,29.93) node [anchor=north west][inner sep=0.75pt]  [rotate=-359.3]  {$4$};
\draw (16.41,92.32) node [anchor=north west][inner sep=0.75pt]  [rotate=-359.3]  {$4$};
\draw (149.2,97.39) node [anchor=north west][inner sep=0.75pt]  [rotate=-359.3]  {$4$};
\draw (151.04,25) node [anchor=north west][inner sep=0.75pt]  [rotate=-359.3]  {$4$};
\draw (206.26,24.02) node [anchor=north west][inner sep=0.75pt]  [rotate=-359.3]  {$4$};
\draw (212.24,95.33) node [anchor=north west][inner sep=0.75pt]  [rotate=-359.3]  {$4$};
\draw (298.27,10.04) node [anchor=north west][inner sep=0.75pt]  [rotate=-359.3]  {$2$};
\draw (301.68,46.82) node [anchor=north west][inner sep=0.75pt]  [rotate=-359.3]  {$2$};
\draw (300.5,71.06) node [anchor=north west][inner sep=0.75pt]  [rotate=-359.3]  {$2$};
\draw (299.9,108.53) node [anchor=north west][inner sep=0.75pt]  [rotate=-359.3]  {$2$};
\draw (345.04,191.59) node [anchor=north west][inner sep=0.75pt]    {$4$};
\draw (345.31,248.5) node [anchor=north west][inner sep=0.75pt]    {$4$};
\draw (482.26,249.8) node [anchor=north west][inner sep=0.75pt]    {$4$};
\draw (483.58,184.78) node [anchor=north west][inner sep=0.75pt]    {$4$};
\draw (633.87,162.98) node [anchor=north west][inner sep=0.75pt]    {$2$};
\draw (639.6,275.34) node [anchor=north west][inner sep=0.75pt]    {$2$};
\draw (635.2,239.7) node [anchor=north west][inner sep=0.75pt]    {$2$};
\draw (634.53,197.45) node [anchor=north west][inner sep=0.75pt]    {$2$};
\draw (340.82,32.24) node [anchor=north west][inner sep=0.75pt]    {$4$};
\draw (341.51,87.81) node [anchor=north west][inner sep=0.75pt]    {$4$};
\draw (476.94,101.77) node [anchor=north west][inner sep=0.75pt]    {$2$};
\draw (478.29,20.93) node [anchor=north west][inner sep=0.75pt]    {$2$};
\draw (634.31,4.45) node [anchor=north west][inner sep=0.75pt]    {$2$};
\draw (633.62,111.15) node [anchor=north west][inner sep=0.75pt]    {$2$};
\draw (635.68,81.14) node [anchor=north west][inner sep=0.75pt]    {$2$};
\draw (635,38.91) node [anchor=north west][inner sep=0.75pt]    {$2$};
\draw (414.49,47.4) node [anchor=north west][inner sep=0.75pt]    {$2$};
\draw (413.98,70.27) node [anchor=north west][inner sep=0.75pt]    {$2$};
\draw (9.34,333.41) node [anchor=north west][inner sep=0.75pt]    {$4$};
\draw (10.04,412.6) node [anchor=north west][inner sep=0.75pt]    {$4$};
\draw (299.01,357.04) node [anchor=north west][inner sep=0.75pt]    {$2$};
\draw (298.14,327.68) node [anchor=north west][inner sep=0.75pt]    {$2$};
\draw (301.13,421.98) node [anchor=north west][inner sep=0.75pt]    {$2$};
\draw (300.27,392.62) node [anchor=north west][inner sep=0.75pt]    {$2$};
\draw (102.82,309.18) node [anchor=north west][inner sep=0.75pt]    {$2$};
\draw (108.43,344.82) node [anchor=north west][inner sep=0.75pt]    {$2$};
\draw (105.69,400.82) node [anchor=north west][inner sep=0.75pt]    {$2$};
\draw (98.61,438.58) node [anchor=north west][inner sep=0.75pt]    {$2$};
\draw (188.47,327.68) node [anchor=north west][inner sep=0.75pt]    {$4$};
\draw (189.41,414.19) node [anchor=north west][inner sep=0.75pt]    {$4$};
\draw (13.89,191.77) node [anchor=north west][inner sep=0.75pt]    {$4$};
\draw (14.59,246.19) node [anchor=north west][inner sep=0.75pt]    {$4$};
\draw (299.53,168.37) node [anchor=north west][inner sep=0.75pt]    {$2$};
\draw (300.93,193.58) node [anchor=north west][inner sep=0.75pt]    {$2$};
\draw (300.22,245.1) node [anchor=north west][inner sep=0.75pt]    {$2$};
\draw (300.91,267.41) node [anchor=north west][inner sep=0.75pt]    {$2$};
\draw (127.55,199.94) node [anchor=north west][inner sep=0.75pt]    {$2$};
\draw (126.15,236.09) node [anchor=north west][inner sep=0.75pt]    {$2$};
\draw (204.33,195.65) node [anchor=north west][inner sep=0.75pt]    {$2$};
\draw (213.95,239.6) node [anchor=north west][inner sep=0.75pt]    {$2$};
\draw (351.74,336.16) node [anchor=north west][inner sep=0.75pt]    {$4$};
\draw (352.44,409.54) node [anchor=north west][inner sep=0.75pt]    {$4$};
\draw (637.94,307.26) node [anchor=north west][inner sep=0.75pt]    {$2$};
\draw (640.72,434.09) node [anchor=north west][inner sep=0.75pt]    {$2$};
\draw (636.26,339.83) node [anchor=north west][inner sep=0.75pt]    {$2$};
\draw (638.75,396.44) node [anchor=north west][inner sep=0.75pt]    {$2$};
\draw (441.7,354.51) node [anchor=north west][inner sep=0.75pt]    {$2$};
\draw (440.31,387.24) node [anchor=north west][inner sep=0.75pt]    {$2$};
\draw (491.21,337.77) node [anchor=north west][inner sep=0.75pt]    {$1$};
\draw (503.07,407.34) node [anchor=north west][inner sep=0.75pt]    {$1$};
\draw (542.82,383.12) node [anchor=north west][inner sep=0.75pt]    {$1$};
\draw (546.59,355.97) node [anchor=north west][inner sep=0.75pt]    {$1$};
\draw (18.45,507.02) node [anchor=north west][inner sep=0.75pt]    {$4$};
\draw (19.1,570.93) node [anchor=north west][inner sep=0.75pt]    {$4$};
\draw (293.46,485.93) node [anchor=north west][inner sep=0.75pt]    {$2$};
\draw (292.81,512.77) node [anchor=north west][inner sep=0.75pt]    {$2$};
\draw (292.16,565.18) node [anchor=north west][inner sep=0.75pt]    {$2$};
\draw (290.86,591.38) node [anchor=north west][inner sep=0.75pt]    {$2$};
\draw (99.26,479.29) node [anchor=north west][inner sep=0.75pt]    {$3$};
\draw (92.99,592.12) node [anchor=north west][inner sep=0.75pt]    {$3$};
\draw (120.11,511.93) node [anchor=north west][inner sep=0.75pt]    {$1$};
\draw (119.46,565.43) node [anchor=north west][inner sep=0.75pt]    {$1$};
\draw (169.64,509.68) node [anchor=north west][inner sep=0.75pt]    {$1$};
\draw (168.04,566.89) node [anchor=north west][inner sep=0.75pt]    {$1$};
\draw (352.86,504.16) node [anchor=north west][inner sep=0.75pt]    {$4$};
\draw (353.53,570.91) node [anchor=north west][inner sep=0.75pt]    {$4$};
\draw (642.78,542.88) node [anchor=north west][inner sep=0.75pt]    {$2$};
\draw (640.77,590.94) node [anchor=north west][inner sep=0.75pt]    {$2$};
\draw (435.89,482.13) node [anchor=north west][inner sep=0.75pt]    {$3$};
\draw (427.18,590.27) node [anchor=north west][inner sep=0.75pt]    {$3$};
\draw (640.77,528.86) node [anchor=north west][inner sep=0.75pt]    {$2$};
\draw (639.43,490.14) node [anchor=north west][inner sep=0.75pt]    {$2$};
\draw (526.95,499.49) node [anchor=north west][inner sep=0.75pt]    {$4$};
\draw (526.95,574.25) node [anchor=north west][inner sep=0.75pt]    {$4$};
\draw (441.91,514.84) node [anchor=north west][inner sep=0.75pt]    {$1$};
\draw (442.58,560.23) node [anchor=north west][inner sep=0.75pt]    {$1$};
\draw (551,197.98) node [anchor=north west][inner sep=0.75pt]    {$2$};
\draw (556,237.98) node [anchor=north west][inner sep=0.75pt]    {$2$};

\end{tikzpicture}

    \caption{The tropical count of $\tilde{h}_1((4),(2,2))$.}
    \label{fig:ExH^1((4),(2,2))1}
\end{figure}
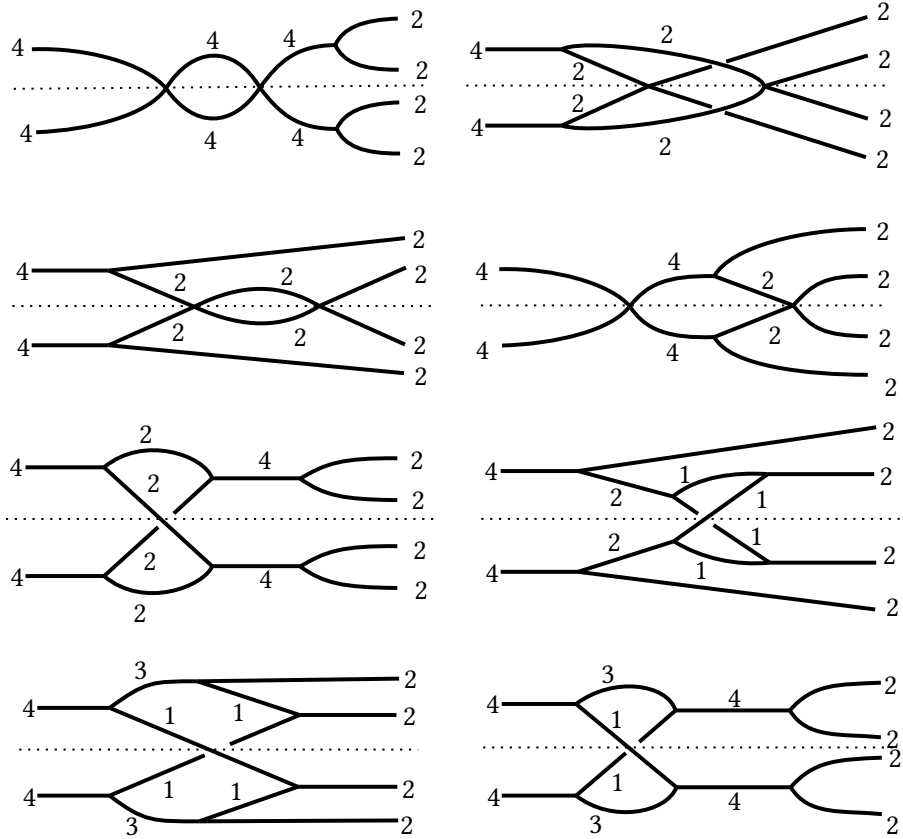

Figure \ref{fig:ExH^1((4),(2,2))1} shows all monodromy graphs of type $(1,(4),(2,2))$. We did not draw directions for the edges, they are all directed to the right. The order of the vertices is always fixed by the direction of the edges, except for the top right graph for which the second and third vertex could be exchanged. As these orderings yield isomorphic covers, we only need to consider one.

There are $3$ branch points. In the top left graph in Figure \ref{fig:ExH^1((4),(2,2))1}, we have two $4$-valent vertices, each adjacent to edges of weight $4$, four (independent) automorphisms and two pairs of inner edges of weight $4$ each. Thus the top graph contributes $2^3 \cdot 3\cdot 3\cdot 4\cdot 4\cdot (\frac{1}{2})^4=72$.

For the top right graph, there are two $4$-valent vertices, each adjacent to edges of weight $2$, three (independent) automorphism and two pairs of inner edges of weight $2$ each. Thus the top right graph contributes $2^3\cdot 1\cdot 1\cdot 2\cdot 2\cdot (\frac{1}{2})^3=4$.

For the left graph in the second row, there are two $4$-valent vertices, each adjacent to edges of weight $2$, three (independent) automorphism and two pairs of inner edges of weight $2$ each. Thus the third graph contributes $2^3\cdot 1\cdot 1\cdot 2\cdot 2\cdot (\frac{1}{2})^3=4$.

The right graph in the second row has two $4$-valent vertices, one adjacent to edges of weight $4$ the other to weight $2$, it has three (independent) automorphisms and two pairs of inner edges of weight $4$ resp.\ $2$. Thus it contributes $2^3\cdot 3\cdot 1\cdot 4\cdot 2\cdot (\frac{1}{2})^3=24$.

The left graph in the third row has no $4$-valent vertices, three (independent) automorphism and three pairs of inner edges of weights $2$, $2$ resp.\ $4$. Thus it contributes $2^3\cdot 2\cdot 2\cdot 4\cdot (\frac{1}{2})^3=16$.

The right graph in the third row has no $4$-valent vertices, two (independent) automorphism, one pairs of inner edges of weight $2$ and two pairs of inner edges of weight $1$. Thus, it contributes $2^3\cdot 1\cdot 1\cdot 2\cdot (\frac{1}{2})^2=4$.

The left graph in the last row has no $4$-valent vertices, one automorphism and three pairs of inner edges of weights $1$, $1$ resp.\ $3$. Thus it contributes $2^3\cdot 1\cdot 1\cdot 3\cdot \frac{1}{2}=12$.

The right graph in the last row has no $4$-valent vertices, two automorphisms and three pairs of inner edges of weights $1$, $3$ resp.\ $4$. Thus it contributes $2^3\cdot 1\cdot 3\cdot 4\cdot (\frac{1}{2})^2=24$.

\end{example}

\section{Piecewise polynomial structure of twisted double Hurwitz numbers}
\label{sec:poly}

As for usual double Hurwitz numbers, the tropical approach can be used to deduce the piecewise polynomial structure.

\begin{theorem}\label{thm-PP}
Fix $g$, $\ell(\mu)$ and $\ell(\nu)$. Consider the twisted double Hurwitz numbers as a function 
$$\tilde{h}_g(\mu,\nu):\Big\{(\mu,\nu)\;|\;\sum \mu_i=\sum\nu_j\Big\} \longrightarrow\mathbb{Q}.$$ 

Then the twisted double Hurwitz numbers $\tilde{h}_g(\mu,\nu)$ are piecewise polynomial in the entries $\mu_i$, $\nu_i$. 

The chambers of polynomiality are given by the hyperplane arrangement $\{\sum_{i\in I}\mu_i=\sum_{j\in J}\nu_j\}$, where $I$ and $J$ are subsets of $[\ell(\mu)]$ resp.\ $[\ell(\nu)]$ of size at least $1$ and at most $\ell(\mu)-1$ resp.\ $\ell(\nu)-1$.

In a chamber, the polynomial $\tilde{h}_g(\mu,\nu)$ is of degree $\ell(\mu)+\ell(\nu)-1+2g$. 

\end{theorem}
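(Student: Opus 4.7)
The strategy is to extract the piecewise polynomial behaviour from the graphical formula of \cref{cor-monodromygraphs}, adapting the approach of \cite{CJM11} from the classical to the twisted setting. Group the sum defining $\tilde{h}_g(\mu,\nu)$ by combinatorial type $(\Gamma,\iota)$ of the twisted monodromy graph, forgetting edge weights and vertex ordering. Since $g$, $\ell(\mu)$, $\ell(\nu)$ are fixed, only finitely many combinatorial types occur, so it is enough to show that the contribution of each fixed type is polynomial in $(\mu,\nu)$ on the chambers of the prescribed arrangement, of degree at most $\ell(\mu)+\ell(\nu)-1+2g$.

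Fix such a type $(\Gamma,\iota)$ and descend to the quotient $\Gamma/\iota$, on which all edge weights are well-defined. The prescribed end weights, the balancing at interior vertices, and the equality of the four weights at each $4$-valent vertex form a linear system in the internal weights whose inhomogeneous part is determined by $\mu,\nu$. Choosing a spanning tree of $\Gamma/\iota$, the general solution is parametrised by an integer flow variable $x_k$ along each of the $r = h_1(\Gamma/\iota)$ non-tree edges, and every internal weight $\omega(e)$ as well as every $4$-valent weight $\omega_V$ becomes an affine-linear form in $(\mu,\nu,x_1,\dots,x_r)$ whose $\mu,\nu$-coefficients are signed indicator functions on subsets of the parts. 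The positivity requirements $\omega(e)>0$ cut out a rational polytope $P_\Gamma(\mu,\nu)\subset\mathbb{R}^r$.

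As $(\mu,\nu)$ varies, the combinatorial type of $P_\Gamma(\mu,\nu)$ is locally constant and changes only across hyperplanes where the $\mu,\nu$-constant term of some $\omega(e)$ vanishes; since these terms are signed sums of subsets of the $\mu_i$ and $\nu_j$ with equal total mass, these walls are exactly the ones $\sum_{i\in I}\mu_i=\sum_{j\in J}\nu_j$ claimed in the theorem. Within a fixed chamber, the contribution of $\Gamma$ equals
\[
\sum_{x\in P_\Gamma(\mu,\nu)\cap\mathbb{Z}^r}
2^b\cdot\prod_V(\omega_V-1)\prod_e\omega(e)\cdot\frac{1}{|\Aut(\pi)|},
\]
i.e.\ the sum of a polynomial in $(\mu,\nu,x)$ over the lattice points of a rational polytope whose facets vary linearly in $(\mu,\nu)$. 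Standard vector-partition-function / parametric-Ehrhart arguments then show this is a polynomial in $(\mu,\nu)$ on the chamber.

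For the degree, the product contributes $\#E^0(\Gamma/\iota)+v_4$ linear factors in $(\mu,\nu,x)$, and summing over the $r$ flow variables raises the total degree in $(\mu,\nu)$ by another $r$. Euler-characteristic bookkeeping from \cref{def-mongraph}, using $b=g-1+\ell(\mu)+\ell(\nu)$ together with the half-edge identity $v_3+2v_4=2b$, yields
\[
\#E^0(\Gamma/\iota)=\tfrac12\bigl(3g-3+2\ell(\mu)+2\ell(\nu)-v_4\bigr),\qquad r=\tfrac12\bigl(g+1-v_4\bigr).
\]
Adding the three contributions, the $v_4$ terms cancel and the total degree equals $2g-1+\ell(\mu)+\ell(\nu)$, uniformly across combinatorial types. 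The main technical obstacle will be this final cancellation: the $4$-valent vertices provide simultaneously new $(\omega_V-1)$-factors, new facets in $P_\Gamma(\mu,\nu)$, and a reduction of $h_1(\Gamma/\iota)$, and one must verify that their combined effect respects the prescribed degree bound and introduces no walls beyond those listed.
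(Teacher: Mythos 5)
Your proposal is correct and follows essentially the same route as the paper: express $\tilde h_g(\mu,\nu)$ via twisted monodromy graphs (Corollary~\ref{cor-monodromygraphs}), pass to the quotient $\Gamma/\iota$, parametrise edge weights by $g' = h_1(\Gamma/\iota)$ flow variables, sum the polynomial weight over the lattice points of the resulting parametric polytope, and carry out the same Euler-characteristic bookkeeping (the paper phrases the summation step via Faulhaber formulae rather than parametric Ehrhart theory, but the content is identical). One small remark: the closing sentence flags the $v_4$-cancellation as an open technical obstacle, yet you have already carried out that cancellation two lines earlier, so the caveat is superfluous; the remaining point (that the $4$-valent vertices create no new walls beyond $\sum_{i\in I}\mu_i=\sum_{j\in J}\nu_j$) is exactly what the paper delegates to \cite[Theorem 3.9]{CJM11}.
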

Here, since we treat $\mu$ and $\nu$ as variables, we assume implicitely that $\Aut(\mu)$ and $\Aut(\nu)$ are trivial. If one inserts special values $\mu,\nu$ for which this is not the case, one has to divide the piecewise polynomial expression by $\Aut(\mu)\cdot \Aut(\nu)$ to pass to the twisted Hurwitz number.


\begin{proof}
The proof builds on the proof of the piecewise polynomiality for double Hurwitz numbers obtained by tropical methods in Theorem 3.1 and 3.9 in \cite{CJM11}.

Let $\Gamma$ be a twisted monodromy graph of type $(g,\mu,\nu)$. Here, we treat the entries of $\mu$ and $\nu$ as variables, but $g$, $\ell(\mu)$ and $\ell(\nu)$ are fixed numbers. Accordingly, the weights of the edges of $\Gamma$ are also variable and depend on the $\mu_i$ and $\nu_j$. The number $o(\Gamma)$ of vertex orderings compatible with the edge directions, the factor $\frac{1}{|\Aut(\pi)|}$ and the factor $2^{b}$ (where $b$ is the number of branch points) showing up in the multiplicity with which $\Gamma$ contributes to $\tilde{h}_g(\mu,\nu)$ in Corollary \ref{cor-monodromygraphs} are all numbers. Thus it remains to show the piecewise polynomiality of the factors $\prod_V (\omega_V-1)$ and $\prod_e\omega(e)$. Recall that the first factor is the product over all $4$-valent vertices $V$ and $\omega_V$ denotes the weight of its adjacent edges, and the second factor the product over all inner edges $e$ and $\omega(e)$ denotes their weight.

Assume $\Gamma$ has $c$ $4$-valent vertices. By an Euler-characteristics computation, $\Gamma$ has $$2(\ell(\mu)+\ell(\nu))+3g-3-c$$ bounded edges and $$ 2(\ell(\mu)+\ell(\nu))+2g-2-2c$$ $3$-valent vertices. The quotient $\Gamma/\iota$ thus has $$\ell(\mu)+\ell(\nu)+\frac{3g-3-c}{2}$$ bounded edges and $$\ell(\mu)+\ell(\nu)+g-c-1$$ $3$-valent vertices. In addition, it has $c$ $2$-valent vertices. We can temporarily remove those and merge the adjacent edges, arriving at a $3$-valent graph with $\ell(\mu)+\ell(\nu)$ ends and $\ell(\mu)+\ell(\nu)+\frac{3g-3-c}{2}-c$ edges. Again by an Euler characteristic computation, for the genus $g'$ of $\Gamma/\iota$ we obtain
$\ell(\mu)+\ell(\nu)+3g'-3$ bounded edges. Equating the two expressions above, we derive at $g'=\frac{1}{2}\cdot (g-c+1)$.

We pick  $g'$ edges in $\Gamma/\iota$ whose removal produces a connected tree. We denote the weights of these edges by variables $i_1,\ldots,i_{g'}$. Then by the balancing condition, the weights of the remaining edges are fixed and a homogeneous linear polynomial in the $\mu_i$, $\nu_j$ and the variables $i_k$. The condition that the edge weights have to be positive provides summation bounds for the variables $i_k$ forming a bounded polyhedron. We sum the product over the edge weights and the factors $(\omega_V-1)$ for the $4$-valent vertices over the lattice points of this polyhedron. We have $\ell(\mu)+\ell(\nu)+\frac{3g-3-c}{2}$ factors for the inner edges, and $c$ for the $4$-valent vertices. Using the Faulhaber formulae for summation of powers of the variables $i_k$, we increase the degree by one with each summation.
Thus, we obtain a polynomial of degree 

\begin{align}&\ell(\mu)+\ell(\nu)+\frac{3g-3-c}{2}+c+g'
\\&= \ell(\mu)+\ell(\nu)+\frac{3g-3-c}{2}+c+\frac{1}{2}(g-c+1)
\\&= \ell(\mu)+\ell(\nu)+2g-1.\end{align}

As we obtain a polynomial contribution of this degree for every twisted monodromy graph, we deduce that $\tilde{h}_g(\mu,\nu)$ is a polynomial of degree $\ell(\mu)+\ell(\nu)+2g-1$. 

The piecewise polynomial structure and the walls for the chambers of polynomiality arise as in Theorem 3.9 of \cite{CJM11}, which we state below in \ref{thm-walls}.

\end{proof}

\begin{theorem}[Theorem 3.9, \cite{CJM11}]\label{thm-walls}
For the piecewise polynomial function which associates to $\mu,\nu$ the double Hurwitz number counting branched covers of $\mathbb{P}^1$ with two special ramification profiles $\mu$, $\nu$ and only simple ramification else as in \cite{GJV05, CJM11}, the walls separating the chambers  of polynomiality are given by equations of the form $$\sum_{i\in I} \mu_i - \sum_{j\in J} \nu_j=0,$$ where $I$ and $J$ are subsets of $[\ell(\mu)]$ resp.\ $[\ell(\nu)]$ of size at least $1$ and at most $\ell(\mu)-1$ resp.\ $\ell(\nu)-1$.
\end{theorem}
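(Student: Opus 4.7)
The plan is to leverage the tropical correspondence theorem for classical double Hurwitz numbers from \cite{CJM10}, following the same strategy used in the proof of \cref{thm-PP} but with a finer structural analysis of the polytopes governing each monodromy graph's contribution.

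First I would recall that $h_g(\mu,\nu)$ equals a weighted count of genus $g$ directed trivalent monodromy graphs $\Gamma$ with $\ell(\mu)$ ingoing and $\ell(\nu)$ outgoing ends of the prescribed weights, where each $\Gamma$ contributes $\prod_e \omega(e)/|\Aut(\Gamma)|$ times a combinatorial factor counting compatible vertex orderings. Picking a spanning tree of $\Gamma$ and denoting the weights of the $g$ complementary edges by free variables $i_1,\ldots,i_g$, the balancing condition expresses every internal edge weight as a homogeneous linear function of the $\mu_r$, $\nu_s$ and the $i_k$. The positivity requirements $\omega(e)>0$ cut out a bounded polytope $P_\Gamma(\mu,\nu)$ in the $i$-variables, and summing $\prod_e\omega(e)$ over lattice points of $P_\Gamma(\mu,\nu)$ yields, via Faulhaber-type formulae, a polynomial in $(\mu,\nu)$ throughout any region where the combinatorial type of $P_\Gamma(\mu,\nu)$ is constant.

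The central step is to pinpoint where $P_\Gamma(\mu,\nu)$ can change combinatorial type, which happens exactly when one of the defining inequalities degenerates at an extreme point, i.e.\ when the affine function $\omega(e)$ changes sign. For an interior edge $e$ whose removal disconnects $\Gamma$ into two components with left ends indexed by $I\subseteq[\ell(\mu)]$ and right ends by $J\subseteq[\ell(\nu)]$, the balancing condition forces $\omega(e)=\sum_{i\in I}\mu_i-\sum_{j\in J}\nu_j$. For $e$ to be an interior edge, neither component may consist exclusively of ingoing nor exclusively of outgoing ends, enforcing $1\le|I|\le \ell(\mu)-1$ and $1\le|J|\le\ell(\nu)-1$. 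The dependence on the $i_k$ disappears for those cut edges not severed by the chosen spanning tree, and one checks by varying the spanning tree that every relevant wall has the advertised form.

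The main obstacle is verifying the converse: that each such hyperplane is genuinely a wall of the piecewise polynomial function, rather than an artificial subdivision introduced by the graph-by-graph analysis. One must exhibit, for every admissible pair $(I,J)$, at least one monodromy graph whose contribution changes across the corresponding hyperplane in a way that does not cancel against other graphs. This step is the crux handled in \cite{CJM11}, where an explicit wall-crossing formula expresses the jump as a convolution of smaller double Hurwitz numbers associated with the two sides of the cut, thereby confirming both non-triviality of the wall and the shape of the recursive structure.
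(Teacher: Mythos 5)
This theorem is cited from \cite{CJM11}; the paper at hand does not re-prove it, it only offers a one-sentence gloss after the statement (an edge weight can vanish on the wall, and the set of feasible monodromy graphs changes as that edge reverses direction). So there is no ``paper's own proof'' to compare against, only that intuition and the reference.

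Your proposal correctly reproduces the skeleton of the argument in \cite{CJM11}: pass to monodromy graphs via the tropical correspondence, parametrize the bounded-edge weights by choosing a spanning tree and free variables $i_1,\ldots,i_g$, observe that the contribution of each graph is a lattice-point sum over a parametric polytope, and identify walls with the vanishing of an affine form $\omega(e)$. The identification $\omega(e)=\sum_{i\in I}\mu_i-\sum_{j\in J}\nu_j$ for a separating edge, and the observation that the nondegenerate walls are exactly those with $1\le|I|\le\ell(\mu)-1$, $1\le|J|\le\ell(\nu)-1$, are both correct, although your stated reason (``for $e$ to be an interior edge'') is not quite the right one: the actual constraint is that the affine form must be capable of changing sign on the positive orthant $\{\mu_i>0,\ \nu_j>0,\ \sum\mu_i=\sum\nu_j\}$, which is precisely what rules out $I\in\{\emptyset,[\ell(\mu)]\}$ and $J\in\{\emptyset,[\ell(\nu)]\}$.

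The genuine gap is in the sentence ``The dependence on the $i_k$ disappears for those cut edges not severed by the chosen spanning tree, and one checks by varying the spanning tree that every relevant wall has the advertised form.'' This is backwards and incomplete. A bridge (separating edge) is necessarily in \emph{every} spanning tree, and its weight is automatically $i_k$-independent; the edges not in the spanning tree are exactly the free ones with weight $i_k$. So ``varying the spanning tree'' cannot produce new walls of the advertised form. More importantly, you have only accounted for the walls where a \emph{single} separating edge can flip direction; you have not explained why the chamber decomposition of the parametric lattice-point-count (the Ehrhart/Sturmfels chamber complex for the polytope $P_\Gamma(\mu,\nu)$) does not introduce \emph{additional} walls coming from several non-separating edges hitting weight zero simultaneously, which a priori involve the $i_k$. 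Establishing that those potential degenerations project precisely onto the resonance hyperplanes, and onto nothing else, is exactly the content of the cited result and requires the analysis of feasibility polytopes carried out in \cite{CJM11}; your sketch treats it as obvious. You do correctly flag that the converse direction (each such hyperplane is genuinely a wall) needs the explicit wall-crossing formula, but the forward direction has the same status: it is not a routine verification, and the spanning-tree variation you propose does not do the work.
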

These equations state that an edge of a feasible monodromy graph can have weight $0$. On one side of the wall, the graph with this edge directed in one way appears; on the other side, the graph with the direction of this edge reversed. This is in charge of the \emph{piecewise} polynomial behaviour: we do not sum over the same monodromy graphs. The situation is exactly the same in the case of twisted monodromy graphs.

\begin{remark}
If we consider double Hurwitz numbers counting branched covers of $\mathbb{P}^1$ with two special ramification profiles $\mu$, $\nu$ and only simple ramification else as in \cite{GJV05, CJM11}, we obtain a polynomial in the $\mu_i$, $\nu_j$ of degree $4g-3+\ell(\mu)+\ell(\nu)$.
This fits nicely with the genus of the surface $S$ mentioned in Definition \ref{def-nonoriented}, which equals $\frac{g+1}{2}$.
\end{remark}

\begin{remark}
    Note that \cite[Theorem 6.6]{chapuy2020non} proves piecewise polynomiality for the more general $b$-Hurwitz numbers, which for $b=1$ coincide with our twisted Hurwitz numbers up to a factor of $n$. 
    One could say that \cref{thm-PP} improves this result, as it shows that the polynomials in \cite[Theorem 6.6]{chapuy2020non} are divisible by $n$. This can be seen also directly in the context of \cite[Theorem 6.6]{chapuy2020non} however, as for the special case $b=1$ the factor of $n$ corresponds to a combinatorial factor.
\end{remark}

In the following, we study the special situations of twisted double Hurwitz numbers of genus zero and genus one, as in these cases we can be more precise and include statements not only about the top degree but also about the smallest degree appearing in the polynomials $\tilde{h}_0(\mu,\nu)$ and $\tilde{h}_1(\mu,\nu)$. The next lemma serves as a preparation for the two following propositions dealing with the cases genus zero and genus one.
 
\begin{lemma}\label{lem-g+1}
Let $\Gamma$ be a twisted monodromy graph of type $(g,\mu,\nu)$. Then $\Gamma$ can have at most $g+1$ $4$-valent vertices. 
\end{lemma}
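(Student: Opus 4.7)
The plan is to read off the bound from the Euler-characteristic computation already carried out in the proof of Theorem \ref{thm-PP}. Recall that if $\Gamma$ has $c$ four-valent vertices, then $\Gamma$ has $2(\ell(\mu)+\ell(\nu))+2g-2-2c$ three-valent vertices and $2(\ell(\mu)+\ell(\nu))+3g-3-c$ bounded edges. Quotienting by the involution $\iota$ (whose fixed locus is exactly the set of $c$ four-valent vertices) turns each four-valent vertex into a two-valent vertex of $\Gamma/\iota$ and halves the number of three-valent vertices and bounded edges. Consequently $\Gamma/\iota$ has $\ell(\mu)+\ell(\nu)+g-c-1$ three-valent vertices, $c$ two-valent vertices and $\ell(\mu)+\ell(\nu)+\frac{3g-3-c}{2}$ bounded edges.

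The key step is then the first-Betti-number computation for $\Gamma/\iota$. Writing $g'$ for the genus of $\Gamma/\iota$, an Euler-characteristic count gives
\begin{equation}
g' \;=\; 1 - \#V(\Gamma/\iota) + \#E(\Gamma/\iota) \;=\; \tfrac{1}{2}(g-c+1),
\end{equation}
exactly as derived in the proof of Theorem \ref{thm-PP}. Since $g'$ is the genus of a graph, we have $g' \geq 0$, which rearranges to $c \leq g+1$. This is precisely the claim.

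I do not foresee any real obstacle: the lemma is a direct corollary of the Euler-characteristic book-keeping performed earlier, together with the trivial positivity of $g'$. The only small point worth noting in the writeup is that the formula for $g'$ implicitly assumes $g-c+1$ is even, which is automatic since $\Gamma/\iota$ has integer genus; equivalently, $c$ and $g+1$ must have the same parity, but only the inequality $c \leq g+1$ is needed for the lemma.
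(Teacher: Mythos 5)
Your proof is correct but takes a genuinely different route from the paper's. The paper argues combinatorially: it counts the $\ell(\mu)+\ell(\nu)+g-1$ branch points of the induced twisted tropical cover, observes that a $4$-valent vertex does not alter the partition of edge weights (it merely rearranges two cycles of equal length), and then notes that transforming $2\mu$ into $2\nu$ requires at least $\ell(\mu)+\ell(\nu)-2$ branch points that are not $4$-valent, leaving at most $g+1$ that can be. You instead reuse the Euler-characteristic bookkeeping from the proof of Theorem \ref{thm-PP} to extract the identity $g' = \tfrac{1}{2}(g-c+1)$ for the genus of the quotient $\Gamma/\iota$, and then conclude from $g' \geq 0$. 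Both are valid. Your argument is shorter and cleaner given that the Theorem \ref{thm-PP} computation has already been carried out (and indeed the lemma is placed after that theorem in the paper, so the dependency is licit); the paper's argument is more self-contained and makes the combinatorial reason for the bound transparent, namely that $4$-valent vertices do not contribute to moving the ramification profile from source to sink. One minor quibble: your closing remark about the parity of $g-c+1$ is harmless but slightly misdirected — the parity constraint is real and is forced by the existence of the involution on $\Gamma$ (the number of bounded edges $2(\ell(\mu)+\ell(\nu))+3g-3-c$ must be even), but, as you correctly say, only the inequality is needed.
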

\begin{proof}
The tropical twisted cover induced by a twisted monodromy graph of type $(g,\mu,\nu)$ has $\ell(\mu)+\ell(\nu)+g-1$ branch points, by an Euler characteristics computation for $\Gamma$. If the branch point is given by a $4$-valent vertex, the partition of weights left of the branch point equals the partition of weights right of the branch points. In order to change the partition from $2\mu$ to $2\nu$, we need at least as many branch points which are not $4$-valent as two involuted copies of a rational cover with ends $\mu$ and $\nu$ have, i.e.\ $\ell(\mu)+\ell(\nu)-2$. Thus at most $g+1$ branch points can come from $4$-valent vertices.
\end{proof}
 
 \begin{proposition}\label{prop-ppingenus0}
 Consider the genus zero twisted double Hurwitz numbers as a piecewise polynomial function 
$$\tilde{h}_0(\mu,\nu):\Big\{(\mu,\nu)\;|\;\sum \mu_i=\sum\nu_j\Big\} \longrightarrow\mathbb{Q}.$$ 

In each chamber, the polynomial $\tilde{h}_0(\mu,\nu)$ has two homogeneous components, one of degree $\ell(\mu)+\ell(\nu)-1$ and one of degree $\ell(\mu)+\ell(\nu)-2$.

Moreover, each monodromy graph contributing to the count of  $\tilde{h}_0(\mu,\nu)$ has precisely one $4$-valent vertex.
 \end{proposition}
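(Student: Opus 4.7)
The plan is to combine \cref{lem-g+1} with an Euler-characteristic/parity argument to pin down the number of $4$-valent vertices in each monodromy graph of type $(0,\mu,\nu)$, and then to specialize the contribution formula from \cref{cor-monodromygraphs} to this situation.

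For the claim about exactly one $4$-valent vertex, \cref{lem-g+1} bounds the number $c$ of such vertices by $g+1=1$, so it suffices to rule out $c=0$. I would use the identity $g'=\frac{1}{2}(g-c+1)$ that can be extracted from the edge counts in the proof of \cref{thm-PP}, where $g'$ is the genus of the quotient $\Gamma/\iota$. For $g=0$ this gives $g'=\frac{1-c}{2}$, which, together with $g'\in\mathbb{Z}_{\geq 0}$, forces $c$ to be odd; combined with $c\leq 1$ we obtain $c=1$. An alternative route is to note that if $c=0$ then $\Gamma\to\Gamma/\iota$ is a free double cover, so $\chi(\Gamma)=2\chi(\Gamma/\iota)$; since $\chi(\Gamma)=1-g=1$ this would force $\chi(\Gamma/\iota)=\frac{1}{2}$, which is impossible.

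For the bidegree statement, I would exploit that with $g=0$ and $c=1$ the quotient $\Gamma/\iota$ is a tree ($g'=0$) with exactly $\ell(\mu)+\ell(\nu)-2$ bounded edges and a single $2$-valent vertex, coming from the unique $4$-valent vertex $V$ of $\Gamma$. Since $g'=0$, no free parameters $i_k$ appear in the summation used in the proof of \cref{thm-PP}: the balancing condition determines every bounded edge weight as a homogeneous linear form in the variables $\mu_i,\nu_j$, and $\omega_V$ itself is such a linear form. By \cref{cor-monodromygraphs}, the contribution of $\Gamma$ is therefore a rational constant times
\begin{equation*}
(\omega_V-1)\prod_e\omega(e)=\omega_V\prod_e\omega(e)-\prod_e\omega(e),
\end{equation*}
a polynomial with exactly two homogeneous components, of degrees $\ell(\mu)+\ell(\nu)-1$ and $\ell(\mu)+\ell(\nu)-2$.

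Summing these contributions over all twisted monodromy graphs in a fixed chamber preserves this bidegree structure, so $\tilde{h}_0(\mu,\nu)$ decomposes into homogeneous pieces only in these two degrees, consistent with the top-degree statement in \cref{thm-PP}. The main subtlety is really the parity argument ruling out $c=0$; everything else is a direct specialization of formulae already set up in the proof of \cref{thm-PP}.
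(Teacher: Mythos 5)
Your proof is correct and follows essentially the same route as the paper: Lemma \ref{lem-g+1} bounds the number of $4$-valent vertices, a parity/Euler-characteristic argument forces exactly one, and then the weight formula $(\omega_V-1)\prod_e\omega(e)$ on a tree quotient directly yields the two homogeneous components. Your free-double-cover argument ($\chi(\Gamma)=2\chi(\Gamma/\iota)$ fails for $\chi=1$) is a slightly cleaner packaging of the paper's observation that the inner-edge count $2(\ell(\mu)+\ell(\nu))-3-c$ must be even, but the content is the same.
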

 
 \begin{proof}
 By Lemma \ref{lem-g+1}, a rational twisted monodromy graph has at most one $4$-valent vertex. By an Euler characteristics computation, $\Gamma$ has $2(\ell(\mu)+\ell(\nu))-3-c$ inner edges, where $c$ is the number of $4$-valent vertices. Because of the involution, this number must be even and thus there is precisely one $4$-valent vertex.
 
 The weight with which $\Gamma$ contributes to the count of $\tilde{h}_0(\mu,\nu)$ by Theorem \ref{thm-monodromygraphs} equals the product of the edge weights of each pair of inner edges times a factor of $(\omega_V-1)$ for the $4$-valent vertex times a rational number.
 Each edge weight is a homogeneous linear polynomial in the $\mu_i$ and $\nu_j$, and thus also the factor $(\omega_V-1)$ is linear in $\mu_i$ and $\nu_j$ (although not homogeneous).
 It follows that $\Gamma$ contributes a polynomial of degree $\frac{1}{2}\cdot (2(\ell(\mu)+\ell(\nu))-3-1)+1= \ell(\mu)+\ell(\nu)-1$. The terms of smallest degree have degree $\ell(\mu)+\ell(\nu)-2$.
 \end{proof}

 \begin{example}
 In this example, we compute the polynomial $\tilde{h}_0((\mu),(\nu_1,\nu_2))$. As the partition $(\mu)$ consists of just one entry, there are no walls and there is no piecewise structure, but there is only one polynomial. The three monodromy graphs which contribute to the count of $\tilde{h}_0((\mu),(\nu_1,\nu_2))$ are depicted in Figure \ref{fig:exppingenus0}. The polynomial with which they contribute is written next to the picture.
 Altogether, we obtain
 $$ \tilde{h}_0((\mu),(\nu_1,\nu_2)) = \mu\cdot (\mu-1)+\nu_1\cdot (\nu_1-1)+\nu_2\cdot (\nu_2-1)= (\mu^2+\nu_1^2+\nu_2^2)-(\mu+\nu_1+\nu_2), $$
 a polynomial with two homogeneous parts, one of degree two and one of degree one, as expected by Proposition \ref{prop-ppingenus0}.

 \begin{figure}
     \centering

\tikzset{every picture/.style={line width=0.75pt}} 

\begin{tikzpicture}[x=0.75pt,y=0.75pt,yscale=-1,xscale=1]

\draw  [dash pattern={on 0.84pt off 2.51pt}]  (90,340) -- (280,340) ;
\draw    (90,320) .. controls (115.25,320.22) and (127.75,322.72) .. (150,340) ;
\draw    (90,360) .. controls (115.25,360.22) and (134.75,354.22) .. (150,340) ;
\draw    (150,340) .. controls (168.75,325.72) and (181.25,319.72) .. (210,320) ;
\draw    (150,340) .. controls (168.25,352.72) and (175.25,359.72) .. (210,360) ;
\draw    (210,320) .. controls (221.25,314.22) and (231.75,310.22) .. (250,310) ;
\draw    (210,360) .. controls (221.25,354.22) and (231.75,350.22) .. (250,350) ;
\draw    (210,360) .. controls (220.75,367.72) and (231.75,370.22) .. (250,370) ;
\draw    (210,320) .. controls (220.75,327.72) and (231.75,330.22) .. (250,330) ;
\draw  [dash pattern={on 0.84pt off 2.51pt}]  (90,440) -- (280,440) ;
\draw    (150,420) .. controls (161.25,414.22) and (231.75,410.22) .. (250,410) ;
\draw    (200,440) .. controls (211.25,434.22) and (231.75,430.22) .. (250,430) ;
\draw    (150,460) .. controls (160.75,467.72) and (231.75,470.22) .. (250,470) ;
\draw    (150,420) .. controls (160.75,427.72) and (189.25,430.22) .. (200,440) ;
\draw    (90,420) -- (150,420) ;
\draw    (90,460) -- (150,460) ;
\draw    (150,460) .. controls (162.75,451.22) and (186.25,450.72) .. (200,440) ;
\draw    (200,440) .. controls (210.75,447.72) and (231.75,450.22) .. (250,450) ;
\draw  [dash pattern={on 0.84pt off 2.51pt}]  (90,540) -- (280,540) ;
\draw    (150,520) .. controls (161.25,514.22) and (231.75,510.22) .. (250,510) ;
\draw    (200,540) .. controls (211.25,534.22) and (231.75,530.22) .. (250,530) ;
\draw    (150,560) .. controls (160.75,567.72) and (231.75,570.22) .. (250,570) ;
\draw    (150,520) .. controls (160.75,527.72) and (189.25,530.22) .. (200,540) ;
\draw    (90,520) -- (150,520) ;
\draw    (90,560) -- (150,560) ;
\draw    (150,560) .. controls (162.75,551.22) and (186.25,550.72) .. (200,540) ;
\draw    (200,540) .. controls (210.75,547.72) and (231.75,550.22) .. (250,550) ;

\draw (72,314.29) node [anchor=north west][inner sep=0.75pt]   [align=left] {$\displaystyle \mu $};
\draw (73.14,355.71) node [anchor=north west][inner sep=0.75pt]   [align=left] {$\displaystyle \mu $};
\draw (179.14,306.57) node [anchor=north west][inner sep=0.75pt]   [align=left] {$\displaystyle \mu $};
\draw (175.29,362) node [anchor=north west][inner sep=0.75pt]   [align=left] {$\displaystyle \mu $};
\draw (260.14,302) node [anchor=north west][inner sep=0.75pt]   [align=left] {$\displaystyle \nu _{1}$};
\draw (260.71,365.14) node [anchor=north west][inner sep=0.75pt]   [align=left] {$\displaystyle \nu _{1}$};
\draw (261,320) node [anchor=north west][inner sep=0.75pt]   [align=left] {$\displaystyle \nu _{2}$};
\draw (260.71,344.86) node [anchor=north west][inner sep=0.75pt]   [align=left] {$\displaystyle \nu _{2}$};
\draw (72.29,411.43) node [anchor=north west][inner sep=0.75pt]   [align=left] {$\displaystyle \mu $};
\draw (74.86,454.57) node [anchor=north west][inner sep=0.75pt]   [align=left] {$\displaystyle \mu $};
\draw (259.57,400.57) node [anchor=north west][inner sep=0.75pt]   [align=left] {$\displaystyle \nu _{1}$};
\draw (259.57,463.71) node [anchor=north west][inner sep=0.75pt]   [align=left] {$\displaystyle \nu _{1}$};
\draw (259.29,420.86) node [anchor=north west][inner sep=0.75pt]   [align=left] {$\displaystyle \nu _{2}$};
\draw (259.86,443.71) node [anchor=north west][inner sep=0.75pt]   [align=left] {$\displaystyle \nu _{2}$};
\draw (181,418.57) node [anchor=north west][inner sep=0.75pt]   [align=left] {$\displaystyle \nu _{2}$};
\draw (181.57,451.14) node [anchor=north west][inner sep=0.75pt]   [align=left] {$\displaystyle \nu _{2}$};
\draw (73.71,513.43) node [anchor=north west][inner sep=0.75pt]   [align=left] {$\displaystyle \mu $};
\draw (73.14,553.14) node [anchor=north west][inner sep=0.75pt]   [align=left] {$\displaystyle \mu $};
\draw (260.71,501.43) node [anchor=north west][inner sep=0.75pt]   [align=left] {$\displaystyle \nu _{2}$};
\draw (259.86,563.43) node [anchor=north west][inner sep=0.75pt]   [align=left] {$\displaystyle \nu _{2}$};
\draw (261,520.29) node [anchor=north west][inner sep=0.75pt]   [align=left] {$\displaystyle \nu _{1}$};
\draw (260.71,542.29) node [anchor=north west][inner sep=0.75pt]   [align=left] {$\displaystyle \nu _{1}$};
\draw (180.14,516.86) node [anchor=north west][inner sep=0.75pt]   [align=left] {$\displaystyle \nu _{1}$};
\draw (180.43,554) node [anchor=north west][inner sep=0.75pt]   [align=left] {$\displaystyle \nu _{1}$};
\draw (350,330) node [anchor=north west][inner sep=0.75pt]   [align=left] {$\displaystyle  \mu ( \mu -1)$};
\draw (351,430) node [anchor=north west][inner sep=0.75pt]   [align=left] {$\displaystyle  \nu _{2}( \nu _{2} -1)$};
\draw (351,530) node [anchor=north west][inner sep=0.75pt]   [align=left] {$\displaystyle \nu _{1}( \nu _{1} -1)$};

\end{tikzpicture}

     \caption{The computation of the polynomial $\tilde{h}_0((\mu),(\nu_1,\nu_2))$.}
     \label{fig:exppingenus0}
 \end{figure}
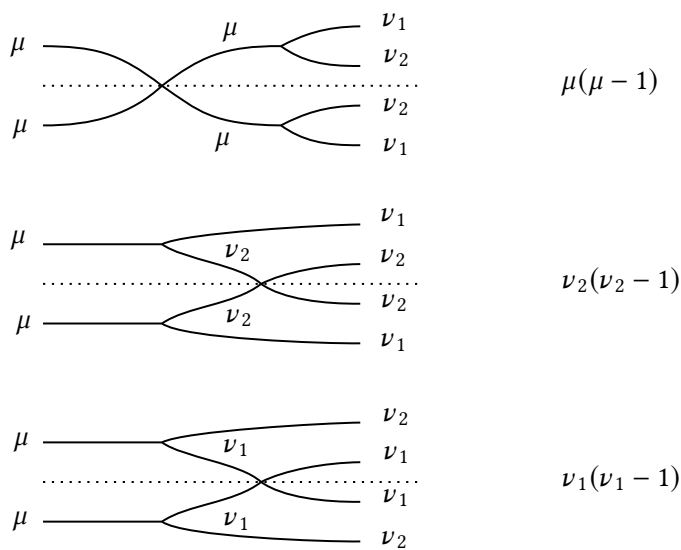

 \end{example}

The following statement will be used in the next Proposition:
\begin{corollary}[Corollary 1.2, \cite{Joh15}]\label{cor-joh}
  The piecewise polynomial function which associates to $\mu,\nu$ the double Hurwitz number counting branched covers of $\mathbb{P}^1$ with two special ramification profiles $\mu$, $\nu$ and only simple ramification else as in \cite{GJV05, CJM11} is, when restricted to a chamber of polynomiality, a sum of homogeneous polynomials of even degrees ranging from $ 4g-3+\ell(\mu)+\ell(\nu)$ to $2g-3+\ell(\mu)+\ell(\nu)$.
\end{corollary}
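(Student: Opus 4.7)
The plan is to proceed along the same tropical lines as for our \cref{thm-PP}, specialising to the orientable (untwisted) double Hurwitz numbers. By the correspondence theorem of \cite{CJM10}, the double Hurwitz number $h_g(\mu,\nu)$ equals a weighted sum over $3$-valent monodromy graphs $\Gamma$ of type $(g,\mu,\nu)$; an Euler characteristic calculation shows that each such $\Gamma$ has $2g-2+\ell(\mu)+\ell(\nu)$ inner vertices and $3g-3+\ell(\mu)+\ell(\nu)$ bounded edges. After fixing a spanning tree and introducing $g$ free integer weights $i_1,\dots,i_g$ on the cycle-breaking edges, the balancing condition pins down the remaining edge weights as homogeneous linear polynomials in the $\mu_i$, $\nu_j$ and $i_k$. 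The contribution of $\Gamma$ is therefore the sum over the lattice points of a bounded polytope $P_\Gamma(\mu,\nu)$ in the $i_k$ of a homogeneous polynomial of total degree $3g-3+\ell(\mu)+\ell(\nu)$.

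To extract the top degree $4g-3+\ell(\mu)+\ell(\nu)$, I would apply Faulhaber's formula
\[
\sum_{k=1}^N k^d \;=\; \frac{1}{d+1}\sum_{j=0}^d \binom{d+1}{j} B_j\, N^{d+1-j}
\]
iteratively in the $g$ free variables, which raises the total degree in the $\mu_i,\nu_j$ by exactly $g$, as already done in \cite{CJM11}. For the parity statement, I would exploit that $B_j = 0$ for all odd $j\geq 3$, so only the $j=0$ (leading), $j=1$ (Bernoulli $-\tfrac{1}{2}$), and even $j\geq 2$ terms contribute. The $j=0$ and even-$j$ contributions shift the degree by an even amount; the $j=1$ correction is the only source of potentially odd-parity shifts. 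I would show that these $j=1$ pieces cancel in pairs when summed over all monodromy graphs in a fixed chamber, most naturally by pairing each $\Gamma$ with the graph obtained by reversing the orientation of a chosen cycle-breaking edge and observing the resulting sign flip in the $B_1$ contribution.

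The main obstacle is pinning down the lower end of the range, namely that the polynomial does not drop below degree $2g-3+\ell(\mu)+\ell(\nu)$. For this I would argue by induction on $g$ via the wall-crossing formula of \cite{CJM11, SSV08}: crossing a wall of the resonance arrangement alters $h_g(\mu,\nu)$ by a convolution of lower-genus double Hurwitz numbers of the same type, and the base cases $g=0,1$ can be checked directly in the spirit of our \cref{prop-ppingenus0}. Assuming the degree range and parity statement inductively for strictly smaller $g$, one then shows that neither the top degree $4g-3+\ell(\mu)+\ell(\nu)$ nor the claimed minimum $2g-3+\ell(\mu)+\ell(\nu)$ collapses in any chamber. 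The bulk of the technical work is the bookkeeping of how the Faulhaber lower-order coefficients interact with the wall-crossing convolution, which I expect to be the hardest part of the argument.
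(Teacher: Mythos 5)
The paper does not prove this statement itself: it is cited from \cite{Joh15}, where Johnson establishes the degree and parity bounds via the operator formalism on the infinite wedge, not via tropical monodromy graphs. Your proposal therefore aims at an independent tropical proof rather than at reconstructing an argument appearing in this paper, and the set-up (Faulhaber summation over lattice points of a polytope in $g$ free variables) is the standard one from \cite{CJM11}, so the top degree $4g-3+\ell(\mu)+\ell(\nu)$ is fine.

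The genuine gap is in the parity/lower-degree part, and it is one the authors explicitly flag immediately after stating \cref{cor-joh}: they note that the degree bound is nontrivial and cannot be deduced directly from the expression of the piecewise polynomial as a sum of polynomials over monodromy graphs, because a single graph can contribute parts of lower degree which only cancel in the total sum (they refer to Example 3.12 of \cite{CJM11}). Your proposed pairing, sending $\Gamma$ to the graph with a chosen cycle-breaking edge reversed so that the $B_1$-weighted Faulhaber contributions cancel in pairs, attempts to organize exactly that cancellation graph by graph, and the sketch does not meet the obstacles. Concretely: reversing a directed edge need not yield a valid monodromy graph in the same chamber, since balancing may force an edge weight to be nonpositive, or the new direction may conflict with the induced partial order of inner vertices; the assignment is not obviously fixed-point-free, and fixed points leave surviving $B_1$ terms; and the $B_1$ contributions are not intrinsic to the graph but depend on the choice of spanning tree and the order of the iterated summations, so the cancellation must be shown stable under, or pinned to, a canonical such choice, which you have not done. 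The induction on $g$ via wall-crossing for the lower degree floor is likewise asserted rather than carried out: you would need to control how all subleading Faulhaber coefficients interact with the wall-crossing convolution across every chamber and every smaller genus, which is precisely the bookkeeping you flag as the hardest part. As it stands, both the parity statement and the lower bound $2g-3+\ell(\mu)+\ell(\nu)$ remain unproved by the proposal.
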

 
  \begin{proposition}\label{prop-ppingenus1}
 Consider the genus one twisted double Hurwitz numbers as a piecewise polynomial function 
$$\tilde{h}_1(\mu,\nu):\Big\{(\mu,\nu)\;|\;\sum \mu_i=\sum\nu_j\Big\} \longrightarrow\mathbb{Q}.$$ 

In each chamber, the polynomial $\tilde{h}_1(\mu,\nu)$ has three homogeneous components, ranging from degree $\ell(\mu)+\ell(\nu)-1$ to degree $\ell(\mu)+\ell(\nu)+1$.

Moreover, each monodromy graph contributing to the count of  $\tilde{h}_1(\mu,\nu)$ has either none or two $4$-valent vertices.
 \end{proposition}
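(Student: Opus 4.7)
The plan is to split any contributing monodromy graph according to its number $c$ of $4$-valent vertices, show that $c\in\{0,2\}$, and then analyze the two cases separately: for $c=2$ the quotient $\Gamma/\iota$ is a tree and the degree structure follows from direct expansion, while for $c=0$ I will reduce to the non-twisted genus-one double Hurwitz number and invoke Corollary~\ref{cor-joh}. For the first claim, Lemma~\ref{lem-g+1} already gives $c\le g+1=2$, and for the parity I will argue that $\iota$ fixes no internal edge of $\Gamma$ (its fixed locus is required to be the set of isolated $4$-valent vertices, so any edge fixed pointwise or setwise would produce extra fixed points). Hence internal edges pair up under $\iota$ and their total number $2(\ell(\mu)+\ell(\nu))-c$ (from the formula in the proof of Theorem~\ref{thm-PP} with $g=1$) must be even, forcing $c$ even.

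For the case $c=2$, the quotient $\Gamma/\iota$ has genus $g'=\tfrac{1}{2}(g-c+1)=0$, so it is a tree with $\ell(\mu)+\ell(\nu)-1$ internal edges whose weights are uniquely determined homogeneous linear forms in the $\mu_i$ and $\nu_j$ (no summation variables are needed). By Corollary~\ref{cor-monodromygraphs} the contribution of $\Gamma$ to $\tilde h_1(\mu,\nu)$ is, up to a rational constant independent of the partition entries, the product $\prod_e \omega(e)\cdot(\omega_{V_1}-1)(\omega_{V_2}-1)$. The first factor is homogeneous of degree $\ell(\mu)+\ell(\nu)-1$, and expanding the second gives homogeneous pieces of degrees $0,1,2$. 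Thus every $c=2$ monodromy graph contributes exactly three homogeneous components, of degrees $\ell(\mu)+\ell(\nu)-1$, $\ell(\mu)+\ell(\nu)$ and $\ell(\mu)+\ell(\nu)+1$.

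For the case $c=0$, the involution $\iota$ acts freely and $\Gamma$ is a connected double cover of the quotient $G:=\Gamma/\iota$, which is itself a non-twisted genus-one monodromy graph of type $(\mu,\nu)$ for the ordinary double Hurwitz number. Since $H^1(G;\mathbb{Z}/2)\cong\mathbb{Z}/2$ has a unique non-trivial element, each such $G$ admits a unique connected double cover, giving a bijection $\Gamma\mapsto\Gamma/\iota$ between isomorphism classes of twisted $c=0$ monodromy graphs of type $(1,\mu,\nu)$ and ordinary genus-one monodromy graphs of type $(\mu,\nu)$. The short exact sequence $1\to\langle\iota\rangle\to\Aut(\pi)\to\Aut(G)\to 1$ yields $|\Aut(\pi)|=2|\Aut(G)|$, and the edge-weight product is preserved by the quotient (internal edges of $\Gamma$ pair up into edges of $G$). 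Combining this with the global factor $2^b$, where $b=\ell(\mu)+\ell(\nu)$ is constant throughout the chamber, I will conclude that the aggregate contribution of all $c=0$ monodromy graphs to $\tilde h_1(\mu,\nu)$ equals $2^{b-1}\cdot h_1^{\mathrm{std}}(\mu,\nu)$, where $h_1^{\mathrm{std}}$ denotes the ordinary genus-one double Hurwitz number of \cite{GJV05,CJM11}. Corollary~\ref{cor-joh} then guarantees that, in each chamber, this quantity has only homogeneous components of degrees $\ell(\mu)+\ell(\nu)-1$ and $\ell(\mu)+\ell(\nu)+1$.

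Summing the two cases, $\tilde h_1(\mu,\nu)$ is a polynomial with at most three homogeneous components, of degrees $\ell(\mu)+\ell(\nu)-1$, $\ell(\mu)+\ell(\nu)$ and $\ell(\mu)+\ell(\nu)+1$: the middle degree arises only from $c=2$ graphs (and is excluded from $c=0$ contributions by Corollary~\ref{cor-joh}), while the outer two are produced in both cases. The hard part will be the $c=0$ bookkeeping, namely verifying that the quotient construction is really a bijection of monodromy graphs (tracking directions, vertex orderings and end-labellings) and that the automorphism ratio is exactly $2$; once this is established, the restriction on degrees follows immediately from Johnson's result.
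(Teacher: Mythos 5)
Your proposal follows essentially the same route as the paper's proof: split by the number $c$ of $4$-valent vertices, use Lemma~\ref{lem-g+1} together with the parity of the internal edge count $2(\ell(\mu)+\ell(\nu))-c$ (forced even because the fixed locus of $\iota$ is the $4$-valent vertices, so no internal edge can be $\iota$-invariant) to conclude $c\in\{0,2\}$; for $c=2$ expand the product $\prod_e\omega(e)\cdot(\omega_{V_1}-1)(\omega_{V_2}-1)$ over the tree $\Gamma/\iota$; for $c=0$ reduce to the ordinary genus-one double Hurwitz number via the quotient and invoke Corollary~\ref{cor-joh}. The one place where you add something is the $c=0$ bookkeeping: you make the reduction explicit as a bijection $\Gamma\mapsto\Gamma/\iota$ using the fact that a genus-one graph has a unique connected double cover, and you pin down the automorphism ratio $|\Aut(\pi)|=2|\Aut(\Gamma/\iota)|$, whereas the paper just asserts that the $c=0$ contribution is a constant multiple of the ordinary count. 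You also note the refinement that the middle degree $\ell(\mu)+\ell(\nu)$ arises only from $c=2$ graphs, which is consistent with (but not stated in) the paper. These are correct amplifications of the same argument, not a different method.
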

 
 \begin{proof}
 The monodromy graph $\Gamma$ has $2(\ell(\mu)+\ell(\nu))+3g-3-c=2(\ell(\mu)+\ell(\nu))-c$ bounded edges, where $c$ is the number of $4$-valent vertices. By Lemma \ref{lem-g+1} and as this number has to be even because of the existence of an involution, $\Gamma$ has no or two $4$-valent vertices.
 
 If $\Gamma$ has no $4$-valent vertex, the quotient $\Gamma/\iota$
 has genus one. The multiplicity with which $\Gamma$ contributes to the count of twisted Hurwitz number equals (up to multiplication with the constant $2^{b}$ which does not depend on the choice of $\Gamma$) the multiplicity with which $\Gamma/\iota$ would contribute to the count of the double Hurwitz number counting branched covers of $\mathbb{P}^1$ with two special ramification profiles $\mu$ and $\nu$ and only simple ramification else. 
 We can thus view the piecewise polynomial of the twisted double Hurwitz number as a sum of (a constant times) the piecewise polynomial of the corresponding double Hurwitz number, plus a summand corresponding to the monodromy graphs which have two $4$-valent vertices. 
 By \cite{Joh15}, Corollary 1.2 (see Corollary \ref{cor-joh} above), the first summand is a polynomial whose homogeneous parts range from degree $\ell(\mu)+\ell(\nu)-1$ to $\ell(\mu)+\ell(\nu)+1$.
 Note that this result is non trivial and cannot be deduced immediately from the expression of the piecewise polynomial as a sum of polynomials for each monodromy graph: a single monodromy graph can have parts of lower degree, but in the total sum the parts of lower degree cancel, see Example 3.12 in \cite{CJM11}.
 
  Now assume $\Gamma$ has two $4$-valent vertices. 
 Then $\Gamma/\iota$ is of genus $0$, and its multiplicity is computed (up to constant) as $\prod_e \omega(e)$ times $(\omega_{V_1}-1)\cdot (\omega_{V_2}-1)$, where $V_1$ and $V_2$ denote the two $4$-valent vertices, $\omega_{V_i}$ the weight of their adjacent edges and $\omega(e)$ the weight of the edge $e$.
 As $\Gamma/\iota$ has $(\ell(\mu)+\ell(\nu))-1$ bounded edges, the first factor yields a homogeneous polynomial of degree $(\ell(\mu)+\ell(\nu))-1$. Multiplying with $(\omega_{V_1}-1)\cdot (\omega_{V_2}-1)$, we obtain a polynomial whose homogeneous parts range from degree $\ell(\mu)+\ell(\nu)-1$ to $\ell(\mu)+\ell(\nu)+1$.
 
 The two summands together, i.e.\ the contribution of graphs with no and with two $4$-valent vertices, then again yields a polynomial whose homogeneous parts range from degree $\ell(\mu)+\ell(\nu)-1$ to $\ell(\mu)+\ell(\nu)+1$.
 
\end{proof}

\begin{example}
In the following, we compute the polynomial $\tilde{h}_1((\mu),(\mu))$. As the partition $(\mu)$ consists of just one entry, there are no walls and there is no piecewise structure, but there is only one polynomial. The two monodromy graphs which contribute to the count of $\tilde{h}_1((\mu),(\mu))$ are depicted in Figure \ref{fig:exppgenus1}. The polynomial with which they contribute is written next to the picture.

Altogether, we obtain
\begin{align}
 \tilde{h}_1((\mu),(\mu))&= \frac{1}{2}\mu(\mu-1)^2+\mu\cdot \sum_{i=1}^{\mu-1} i - \sum_{i=1}^{\mu-1} i^2
 \\&= \frac{1}{2}\mu(\mu-1)^2+\frac{1}{2}\mu^2\cdot (\mu-1)-\frac{1}{6}\mu\cdot (\mu-1)\cdot (2\mu-1)\\
 & = \frac{2}{3}\mu^3-\mu^2+\frac{1}{3}\mu,
\end{align}
which is a polynomial whose homogeneous parts range from degree one to three, as expected by Proposition \ref{prop-ppingenus1}.

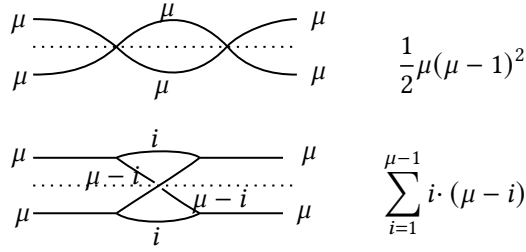
\begin{figure}
    \centering

\tikzset{every picture/.style={line width=0.75pt}} 

\begin{tikzpicture}[scale=0.7,x=0.75pt,y=0.75pt,yscale=-1,xscale=1]

\draw  [dash pattern={on 0.84pt off 2.51pt}]  (90,340) -- (280,340) ;
\draw    (90,320) .. controls (115.25,320.22) and (127.75,322.72) .. (150,340) ;
\draw    (90,360) .. controls (115.25,360.22) and (134.75,354.22) .. (150,340) ;
\draw    (150,340) .. controls (177,313.9) and (205.57,314.75) .. (229.86,339.61) ;
\draw    (150,340) .. controls (173.57,363.32) and (205.86,366.18) .. (229.86,339.61) ;
\draw    (229.86,339.61) .. controls (238.71,329.9) and (261.75,320.22) .. (280,320) ;
\draw    (229.86,339.61) .. controls (242.43,355.32) and (261.75,360.22) .. (280,360) ;
\draw  [dash pattern={on 0.84pt off 2.51pt}]  (90,440) -- (280,440) ;
\draw    (150,420) .. controls (161.25,414.22) and (202.14,413.32) .. (210,420) ;
\draw    (150,460) .. controls (160.75,467.72) and (194.71,468.47) .. (210,460) ;
\draw    (150,420) .. controls (160.75,427.72) and (165.29,430.18) .. (177.57,437.9) ;
\draw    (90,420) -- (150,420) ;
\draw    (90,460) -- (150,460) ;
\draw    (150,460) .. controls (162.75,451.22) and (196.25,430.72) .. (210,420) ;
\draw    (182.86,442.29) .. controls (193.61,450) and (197.29,452.75) .. (210,460) ;
\draw    (210,420) -- (270,420) ;
\draw    (210,460) -- (270,460) ;

\draw (72,314.29) node [anchor=north west][inner sep=0.75pt]   [align=left] {$\displaystyle \mu $};
\draw (73.14,355.71) node [anchor=north west][inner sep=0.75pt]   [align=left] {$\displaystyle \mu $};
\draw (179.14,306.57) node [anchor=north west][inner sep=0.75pt]   [align=left] {$\displaystyle \mu $};
\draw (175.29,362) node [anchor=north west][inner sep=0.75pt]   [align=left] {$\displaystyle \mu $};
\draw (72.29,411.43) node [anchor=north west][inner sep=0.75pt]   [align=left] {$\displaystyle \mu $};
\draw (74.86,454.57) node [anchor=north west][inner sep=0.75pt]   [align=left] {$\displaystyle \mu $};
\draw (125.57,424) node [anchor=north west][inner sep=0.75pt]   [align=left] {$\displaystyle \mu -i$};
\draw (350,330) node [anchor=north west][inner sep=0.75pt]   [align=left] {$\displaystyle \frac{1}{2}\mu ( \mu -1)^{2}$};
\draw (340.5,410.79) node [anchor=north west][inner sep=0.75pt]   [align=left] {$\displaystyle \sum _{i=1}^{\mu -1} i\cdotp ( \mu -i)$};
\draw (288,352) node [anchor=north west][inner sep=0.75pt]   [align=left] {$\displaystyle \mu $};
\draw (288,312) node [anchor=north west][inner sep=0.75pt]   [align=left] {$\displaystyle \mu $};
\draw (281.43,409.71) node [anchor=north west][inner sep=0.75pt]   [align=left] {$\displaystyle \mu $};
\draw (277.71,452.29) node [anchor=north west][inner sep=0.75pt]   [align=left] {$\displaystyle \mu $};
\draw (172.29,397.43) node [anchor=north west][inner sep=0.75pt]   [align=left] {$\displaystyle i$};
\draw (173.43,467.71) node [anchor=north west][inner sep=0.75pt]   [align=left] {$\displaystyle i$};
\draw (202.71,440.86) node [anchor=north west][inner sep=0.75pt]   [align=left] {$\displaystyle \mu -i$};

\end{tikzpicture}

    \caption{The computation of $\tilde{h}_1((\mu),(\mu))$.}
    \label{fig:exppgenus1}
\end{figure}

\end{example}

\begin{example}\label{ex-linearpart}
We now consider the polynomial $\tilde{h}_1(\mu,(\nu_1,\nu_2,\nu_3))$. By the same reasoning as in the previous example, there are no walls and no piecewise structure. We demonstrate that the lower bound on the degree given in \cref{prop-ppingenus1} involves nontrivial cancellations. For the Hurwitz number at hand, we obtain via \cref{prop-ppingenus0} that all monomials in the polynomial expression have degree at least $2$. In the following, we show that $\tilde{h}_1(\mu,(\nu_1,\nu_2,\nu_3))$ does not have any linear terms.\\
For any graph contributing to $\tilde{h}_1(\mu,(\nu_1,\nu_2,\nu_3))$, the polynomial weight is given as a sum of products of $\mu,\nu_i$ and Faulhaber sums $\sum_i i^k$ for some fixed positive integer $k$ and bounds given by linear forms in $\mu$ and the $\nu_i$. Therefore, in order to obtain a linear term, we would need a summand of $\tilde{h}_1(\mu,(\nu_1,\nu_2,\nu_3))$ to be of the form $\sum_i i^k$ for $k\ge 4$. The quotients by involution of the only four graphs (up to permutation of the $\nu_i$) contributing to $\tilde{h}_1(\mu,(\nu_1,\nu_2,\nu_3))$ are illustrated in \cref{fig:exampleg1}. We note that the vertices are ordered from left to right, which distinghuishes the top two graphs from each other.\\
The weight of the top left graph is given by
\begin{equation}
\label{equ:weightg1}
    16(\mu-i)i(\nu_1+\nu_2-i)(\nu_1-i),
\end{equation}
where $i$ lies in the interval $[\nu_1+1,\nu_1+\nu_2]$.
Since, we are only interested in the linear part, we will focus on the contribution of $-i^4$ and we obtain
\begin{equation}
    \textrm{linear part of}\,-\sum_{i=\nu_1}^{\nu_1+\nu_2}i^4=-\frac{\nu_2}{30}.
\end{equation}
Thus, the graph on the top left contributes a linear term of $-16\frac{\nu_2}{30}$.
The same calculation for the top right graphs yields a contribution to the linear term of $-16\frac{\nu_2}{30}$ as well.\\
The weight of the bottom left graph is given by
\begin{equation}
    16(\mu-i)i(i-\nu_1)(i-\nu_1-\nu_2),
\end{equation}
where $i$ lies in the interval $[\nu_1+\nu_2+1,\mu]$. For the linear part, we focus on the contribution of $-i^4$ which yields a total contribution to the linear term of $16\frac{\nu_3}{30}$.\\
Finally, the graph on the bottom right has weight
\begin{equation}
    16(\mu-i)i(\nu_1-i)(\nu_1+\nu_2-i)
\end{equation}
where $i$ lies in the interval $[1,\nu_1]$. Here, the sum over $-i^4$ yields a total contribution of $16\frac{\nu_1}{30}$ to the linear term.\\
Thus, we obtain a linear term of
\begin{equation}
    16(\frac{\nu_1+\nu_3}{30}-\frac{\nu_2}{15})
\end{equation}
summing over all four graphs. Note that the four graphs in \cref{fig:exampleg1} all have $\nu_2$ as the middle strand. Permuting the last strands, such that $\nu_1$ and $\nu_3$ replace $\nu_2$ yields a total linear term of
\begin{equation}
     16(\frac{\nu_1+\nu_3}{30}-\frac{\nu_2}{15})+ 16(\frac{\nu_2+\nu_3}{30}-\frac{\nu_1}{15})+ 16(\frac{\nu_1+\nu_2}{30}-\frac{\nu_3}{15})=0.
\end{equation}

Thus, all linear contributions cancel and indeed the polynomial $\tilde{h}_1(\mu,(\nu_1,\nu_2,\nu_3))$ has no linear monomials.






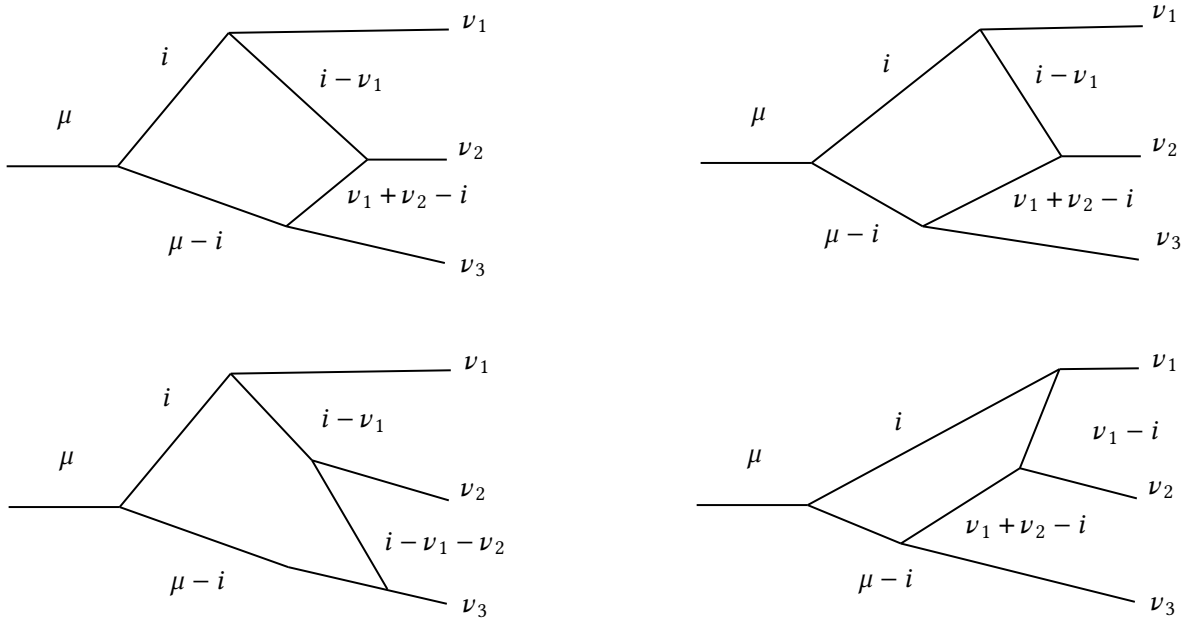
\begin{figure}

\tikzset{every picture/.style={line width=0.75pt}} 

\begin{tikzpicture}[x=0.75pt,y=0.75pt,yscale=-1,xscale=1]

\draw    (29,96.6) -- (85,96.6) ;
\draw    (85,96.6) -- (141,29.26) ;
\draw    (141,29.26) -- (211,93.23) ;
\draw    (141,29.26) -- (252,27.57) ;
\draw    (85,96.6) -- (170,126.91) ;
\draw    (170,126.91) -- (211,93.23) ;
\draw    (211,93.23) -- (251,93.23) ;
\draw    (170,126.91) -- (250,145.43) ;
\draw    (379,94.92) -- (435,94.92) ;
\draw    (435,94.92) -- (520,27.57) ;
\draw    (520,27.57) -- (561,91.55) ;
\draw    (520,27.57) -- (602,25.89) ;
\draw    (435,94.92) -- (491,126.91) ;
\draw    (491,126.91) -- (561,91.55) ;
\draw    (561,91.55) -- (601,91.55) ;
\draw    (491,126.91) -- (600,143.74) ;
\draw    (30,268.6) -- (86,268.6) ;
\draw    (86,268.6) -- (142,201.26) ;
\draw    (142,201.26) -- (183,245) ;
\draw    (142,201.26) -- (253,199.57) ;
\draw    (86,268.6) -- (171,298.91) ;
\draw    (221,310) -- (183,245) ;
\draw    (183,245) -- (252,265.23) ;
\draw    (171,298.91) -- (251,317.43) ;
\draw    (377,267.6) -- (433,267.6) ;
\draw    (433,267.6) -- (560,199) ;
\draw    (560,199) -- (540,249) ;
\draw    (560,199) -- (600,198.57) ;
\draw    (433,267.6) -- (480,287) ;
\draw    (480,287) -- (540,249) ;
\draw    (540,249) -- (599,264.23) ;
\draw    (480,287) -- (598,316.43) ;

\draw (53,67.11) node [anchor=north west][inner sep=0.75pt]    {$\mu $};
\draw (403,65.43) node [anchor=north west][inner sep=0.75pt]    {$\mu $};
\draw (606,13.92) node [anchor=north west][inner sep=0.75pt]    {$\nu _{1}$};
\draw (258,18.97) node [anchor=north west][inner sep=0.75pt]    {$\nu _{1}$};
\draw (608,128.41) node [anchor=north west][inner sep=0.75pt]    {$\nu _{3}$};
\draw (606,81.27) node [anchor=north west][inner sep=0.75pt]    {$\nu _{2}$};
\draw (256,82.95) node [anchor=north west][inner sep=0.75pt]    {$\nu _{2}$};
\draw (257,141.88) node [anchor=north west][inner sep=0.75pt]    {$\nu _{3}$};
\draw (105,35.12) node [anchor=north west][inner sep=0.75pt]    {$i$};
\draw (469,38.49) node [anchor=north west][inner sep=0.75pt]    {$i$};
\draw (109,127.73) node [anchor=north west][inner sep=0.75pt]    {$\mu -i$};
\draw (440,124.36) node [anchor=north west][inner sep=0.75pt]    {$\mu -i$};
\draw (185,45.91) node [anchor=north west][inner sep=0.75pt]    {$i-\nu _{1}$};
\draw (546,44.23) node [anchor=north west][inner sep=0.75pt]    {$i-\nu _{1}$};
\draw (200,104.84) node [anchor=north west][inner sep=0.75pt]    {$\nu _{1} +\nu _{2} -i$};
\draw (536,106.52) node [anchor=north west][inner sep=0.75pt]    {$\nu _{1} +\nu _{2} -i$};
\draw (54,239.11) node [anchor=north west][inner sep=0.75pt]    {$\mu $};
\draw (259,190.97) node [anchor=north west][inner sep=0.75pt]    {$\nu _{1}$};
\draw (257,254.95) node [anchor=north west][inner sep=0.75pt]    {$\nu _{2}$};
\draw (258,313.88) node [anchor=north west][inner sep=0.75pt]    {$\nu _{3}$};
\draw (106,207.12) node [anchor=north west][inner sep=0.75pt]    {$i$};
\draw (110,299.73) node [anchor=north west][inner sep=0.75pt]    {$\mu -i$};
\draw (186,217.91) node [anchor=north west][inner sep=0.75pt]    {$i-\nu _{1}$};
\draw (218,279.84) node [anchor=north west][inner sep=0.75pt]    {$i-\nu _{1} -\nu _{2}$};
\draw (401,238.11) node [anchor=north west][inner sep=0.75pt]    {$\mu $};
\draw (606,189.97) node [anchor=north west][inner sep=0.75pt]    {$\nu _{1}$};
\draw (604,253.95) node [anchor=north west][inner sep=0.75pt]    {$\nu _{2}$};
\draw (605,312.88) node [anchor=north west][inner sep=0.75pt]    {$\nu _{3}$};
\draw (475,219.12) node [anchor=north west][inner sep=0.75pt]    {$i$};
\draw (457,298.73) node [anchor=north west][inner sep=0.75pt]    {$\mu -i$};
\draw (576,223.91) node [anchor=north west][inner sep=0.75pt]    {$\nu _{1} -i$};
\draw (512,271.4) node [anchor=north west][inner sep=0.75pt]    {$\nu _{1} +\nu _{2} -i$};

\end{tikzpicture}
     \caption{Quotients of monodromy graphs contributing to $\tilde{h}_1(\mu,(\nu_1,\nu_2,\nu_3))$, see Example \ref{ex-linearpart}.}
     \label{fig:exampleg1}
\end{figure}

\end{example}

\section{Towards wall crossing formulae for twisted double Hurwitz numbers in genus zero}

We investigate how the polynomials computing twisted Hurwitz numbers vary from chamber to chamber. 
We build on the techniques to prove wall-crossing formulae in genus $0$ for double Hurwitz numbers developed in \cite{CJM10} (see also \cite{SSV08}).

\begin{definition}[Wall-crossing]
Fix a wall $\delta=\sum_{i\in I}\mu_i - \sum_{j\in J}\nu_j=0$ and two adjacent chambers $C_1$ and $C_2$. Let $P_1$ denote the polynomial that equals $\tilde{h}_0(\mu,\nu)$ in $C_1$, $P_2$ the polynomial that equals $\tilde{h}_0(\mu,\nu)$ in $C_2$.
Then the wall crossing for this wall and the two adjacent chambers $C_1$ and $C_2$ is defined to be the difference of the two polynomials:

\begin{equation}
\WC_{\delta}(\mu,\nu):=P_1(\mu,\nu)-P_2(\mu,\nu).\end{equation}

\end{definition}
 
Since we are only concerned with the difference of the polynomials across a wall $\delta=0$, we need only consider the contributions from graphs that contribute to the twisted Hurwitz number in only one of the two chambers in questions. These are precisely the graphs that  contain an edge with weight $\delta$ that switches direction across the wall (see Lemma 6.9 \cite{CJM10}). 

We cut this edge. In this way, we obtain two connected components, each contributing to a ''smaller'' Hurwitz number.

Vice versa, two graphs that contribute to these ''smaller'' Hurwitz numbers can be glued to produce one contributing to the wall crossing.

Compared to the case of double Hurwitz numbers studied in \cite{CJM10}, the only difference is that for twisted Hurwitz numbers of genus zero, our quotient graphs $\Gamma/\iota$ contain precisely one $2$-valent vertex. This vertex, adjacent to an edge of weight $m$, contributes an additional factor of $m(m-1)$ to the weight of the graph compared to the weight that this graph would have without the $2$-valent vertex and contributing to a usual double Hurwitz number.

Accordingly, we obtain three summands in our wall-crossing formula, which correspond to the following three cases:

\begin{itemize}
    \item The $2$-valent vertex is in the first part of the cut graph.
    \item The $2$-valent vertex is in the second part of the cut graph.
    \item The $2$-valent vertex is on the edge which we cut.
\end{itemize}

Due the weight contributed by a $2$-valent vertex, the techniques from \cite{CJM10} do not carry over completely and therefore our description of wall crossings involves a correction term that cannot be described in terms of (twisted) Hurwitz numbers. Thus, we will need the following definition.

\begin{definition}
We fix a wall $\delta=\sum_{i\in I}\mu_i - \sum_{j\in J}\nu_j=0$ and fix an adjacent chamber $C$. Then, we define \begin{equation}
    h_0^{C,\delta}(\mu,\nu)=\sum_{\Gamma} o(\Gamma) \cdot 2^{b}\cdot \prod_V (\omega_V-1)\prod_e\omega(e)\cdot \frac{1}{|\Aut(\pi)|},
\end{equation}
where  the sum goes over all twisted monodromy graphs contributing to $\tilde{h}_0(\mu,\nu)$ in the chamber $C$ without a fixed vertex ordering containing an edge of weight $\delta$ that is adjacent to a $2$-valent vertex. Furthermore, the number of vertex orderings which are compatible with the edge directions is denoted by $o(\Gamma)$.
\end{definition}

By the same arguments as used in the proof of \cref{thm-PP}, we have that $h_0^{C,\delta}(\mu,\nu)$ is a polynomial in the entries of $\mu$ and $\nu$.

\begin{proposition}[Wall-crossing for twisted double Hurwitz numbers in genus zero]
\label{thm-wc}

Fix a wall $\delta=\sum_{i\in I}\mu_i - \sum_{j\in J}\nu_j=0$.
Then the wall crossing for this wall and the two adjacent chambers $C_1$ and $C_2$ equals a sum of products of ''smaller'', traditional and twisted, Hurwitz numbers with an additional correction term:

\begin{align}\WC_{\delta}(\mu,\nu)= &\big(h_0^{C_1,\delta}(\mu,\nu)- h_0^{C_2,\delta}(\mu,\nu)\big)+
  \delta\cdot \\
&\Big(2^{|I^c|+|J^c|-1}\cdot\binom{\ell(\mu)+\ell(\nu)-1}{\ell(\mu_I)+\ell(\nu_J)}\cdot \tilde{h}_0\big(\mu_I,(\nu_J,\delta)\big)\cdot h_0\big((\mu_{I^c},\delta),\nu_{J^c}\big)
\\&+2^{|I|+|J|-1}\cdot\binom{\ell(\mu)+\ell(\nu)-1}{\ell(\mu_I)+\ell(\nu_J)-1}\cdot h_0\big(\mu_I,(\nu_J,\delta)\big)\cdot \tilde{h}_0\big((\mu_{I^c},\delta),\nu_{J^c}\big)\Big).
\end{align}
Here, $h_0(\mu,\nu)$ denotes the double Hurwitz number counting covers of the projective line with fixed ramification profile $\mu$ over $0$, $\nu$ over $\infty$ and simple ramification over $r$ additional fixed branch points.
\end{proposition}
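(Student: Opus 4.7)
The plan is to adapt the tropical wall-crossing strategy of \cite{CJM10} (Lemma 6.9 and the subsequent arguments) to the twisted setting, using \cref{cor-monodromygraphs} together with the structural result \cref{prop-ppingenus0}, which says that every rational twisted monodromy graph $\Gamma$ has exactly one $4$-valent vertex, hence its quotient $\Gamma/\iota$ has exactly one $2$-valent vertex, which I will denote by $V_0$. As in the classical case, a monodromy graph contributes to $\WC_\delta(\mu,\nu)$ only if it contains an edge whose weight equals $\pm\delta$ and whose direction reverses across the wall $\delta=0$. Cutting this edge in $\Gamma/\iota$ (equivalently, cutting the pair of $\iota$-swapped edges in $\Gamma$) separates $\Gamma/\iota$ into two pieces $Q_I$ and $Q_{I^c}$ with end profiles $(\mu_I,(\nu_J,\delta))$ and $((\mu_{I^c},\delta),\nu_{J^c})$ respectively, and extracts $\delta$ as a common factor from $\prod_e \omega(e)$.

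The argument then splits into three cases according to the location of $V_0$. If $V_0$ lies in the interior of $Q_I$, then $Q_I$ is itself the quotient of a twisted rational monodromy graph and contributes to $\tilde{h}_0(\mu_I,(\nu_J,\delta))$, while $Q_{I^c}$ carries no $2$-valent vertex and is a classical rational monodromy graph contributing to the ordinary double Hurwitz number $h_0((\mu_{I^c},\delta),\nu_{J^c})$. The symmetric situation with $V_0\in Q_{I^c}$ produces the second product. The third case occurs when the weight-$\delta$ edge is adjacent to $V_0$: then neither piece is the quotient of a standard twisted rational cover, the factor $(\omega_{V_0}-1)=(\delta-1)$ is non-multiplicative across the cut, and by construction the total contribution of these graphs to $P_1-P_2$ is precisely $h_0^{C_1,\delta}(\mu,\nu)-h_0^{C_2,\delta}(\mu,\nu)$.

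The combinatorial weights are then recovered as follows. The binomial coefficients count the number of interleavings of the vertex orderings of $Q_I$ and $Q_{I^c}$ producing a valid global vertex ordering of $\Gamma/\iota$; since $\Gamma/\iota$ has $\ell(\mu)+\ell(\nu)-1$ interior vertices (by the Euler-characteristic computation carried out in the proof of \cref{thm-PP}), one obtains $\binom{\ell(\mu)+\ell(\nu)-1}{\ell(\mu_I)+\ell(\nu_J)}$ when $V_0$ is counted with $Q_I$, and the shifted binomial $\binom{\ell(\mu)+\ell(\nu)-1}{\ell(\mu_I)+\ell(\nu_J)-1}$ in the symmetric case. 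The powers of $2$ reflect the doubling introduced by the involution on the non-twisted side: in $\Gamma$ itself (as opposed to $\Gamma/\iota$) the non-twisted piece appears twice, as a pair of disjoint $\iota$-swapped components, and the factors $2^{|I^c|+|J^c|-1}$ and $2^{|I|+|J|-1}$ enumerate the choices of which element of each $\iota$-paired end is assigned to which copy, modulo the global swap of the two copies. Combining these factors with the multiplicities provided by \cref{cor-monodromygraphs} for the two pieces reproduces the stated products.

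The main obstacle is the careful matching of multiplicities: the factor $2^{b}\cdot\prod_V(\omega_V-1)\cdot\prod_e\omega(e)\cdot|\Aut(\pi)|^{-1}$ attached to $\Gamma$ must decompose as the product of the corresponding factors for $Q_I$ and $Q_{I^c}$, together with $\delta$ and the combinatorial factors above. In particular, one must check that an interior $V_0$ is inherited by the appropriate twisted piece so that the factor $(\omega_{V_0}-1)$ fits cleanly into a smaller twisted Hurwitz count, whereas the case in which $V_0$ is adjacent to the cut edge (forcing $\omega_{V_0}=\delta$) must be isolated and absorbed into the correction term $h_0^{C_1,\delta}-h_0^{C_2,\delta}$, which cannot itself be expressed as a product of (twisted or classical) Hurwitz numbers. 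The chamber-dependence of the correction term mirrors the chamber-dependence of the orientation of the weight-$\delta$ edge, and the remaining reassembly of multiplicities proceeds exactly as in \cite{CJM10}.
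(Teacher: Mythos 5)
Your proposal follows essentially the same approach as the paper: the restriction to graphs with a $\pm\delta$-weight edge (via Lemma~6.9 of \cite{CJM10}), the trichotomy based on the location of the unique $2$-valent vertex $V_0$ of $\Gamma/\iota$ (interior to $Q_I$, interior to $Q_{I^c}$, or adjacent to the cut edge), the identification of the third case with the correction term $h_0^{C_1,\delta}-h_0^{C_2,\delta}$, and the binomial count of interleavings of vertex orderings all match the paper's argument.

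The one step that is not quite right is your derivation of the powers of $2$. You attribute the factors $2^{|I^c|+|J^c|-1}$ and $2^{|I|+|J|-1}$ to ``choices of which element of each $\iota$-paired end is assigned to which copy, modulo the global swap.'' But once $\Gamma$ is fixed, the assignment of the lifted ends of $Q_{I^c}$ to the two $\iota$-swapped components of $\Gamma$ minus the $4$-valent vertex is completely determined --- there is no free choice to enumerate, so this heuristic, while producing the correct exponent, is not a valid derivation. The paper instead routes through \cref{lem-compareweights}: the contribution of $\Gamma$ to $\tilde{h}_0(\mu,\nu)$ carries a factor $2^{b}$ (with $|\Aut(\pi)|=4$ absorbing $2^{-2}$), while the twisted sub-count $\tilde{h}_0(\mu_I,(\nu_J,\delta))$ carries $2^{b_1}$ with $b_1=\ell(\mu_I)+\ell(\nu_J)$, and the classical sub-count carries no such factor. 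The residual $2^{b-b_1}=2^{|I^c|+|J^c|-1}$ is then exactly the power of $2$ appearing in the first product (and symmetrically $2^{b-b_2}=2^{|I|+|J|-1}$ for the second). You should replace the end-assignment heuristic with this direct bookkeeping of the $2^{b}$-normalisations; otherwise the proof is in good shape and coincides with the paper's.
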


Before we start with the proof, we collect the following necessary statements:

\begin{lemma}\label{lem-compareweights}
Let $\Gamma$ be a twisted monodromy graph of type $(0,\mu,\nu)$ and let $p$ denote the weight with which it contributes to $\tilde{h}_0(\mu,\nu)$. 
Then the graph $\tilde{\Gamma}$ we obtain from $\Gamma/\iota$ by forgetting the $2$-valent vertex contributes to the Hurwitz number $h_0(\mu,\nu)$ with weight
$$\frac{1}{m\cdot(m-1)} \cdot \frac{1}{2^{b-2}}\cdot p,$$
where $m$ denotes the weight of the edges adjacent to the $4$-valent vertex of $\Gamma$.

\end{lemma}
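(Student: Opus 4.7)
I plan to prove this by a direct factor-by-factor comparison of the weight $p$ from \cref{cor-monodromygraphs} with the contribution of $\tilde\Gamma$ to $h_0(\mu,\nu)$ via the standard tropical correspondence theorem for double Hurwitz numbers (see \cite{CJM10}).

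First, \cref{prop-ppingenus0} guarantees that the genus-zero twisted monodromy graph $\Gamma$ has exactly one $4$-valent vertex $V$, and that its four adjacent edges all carry the weight $m$. Hence the product $\prod_V (\omega_V-1)$ in \cref{cor-monodromygraphs} collapses to the single factor $(m-1)$. Next, the image of $V$ in the quotient $\Gamma/\iota$ is a $2$-valent vertex adjacent to two edges of weight $m$. Merging these two edges to produce $\tilde\Gamma$ changes the product of internal edge weights by a single factor of $m$, giving
\begin{equation*}
\prod_{e\in E^0(\Gamma/\iota)}\omega(e)=m\cdot\prod_{\tilde e\in E^0(\tilde\Gamma)}\omega(\tilde e).
\end{equation*}
Together these two observations account for the factor $m(m-1)$ in the claim.

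It remains to reconcile the twisted prefactor $2^b$ (present in \cref{cor-monodromygraphs} but absent from the classical tropical formula) with the $2^{b-2}$ in the statement. This reduces to establishing the combinatorial identity
\begin{equation*}
|\Aut(\pi)|\cdot o(\tilde\Gamma)\;=\;4\cdot o(\Gamma)\cdot|\Aut(\tilde\Gamma)|.
\end{equation*}
The key point is that the quotient $\Gamma\to\Gamma/\iota$ is a $\mathbb{Z}_2$-cover branched exactly at $V$, so each automorphism of $\tilde\Gamma$ admits two lifts to $\Aut(\pi)$, related by the deck transformation $\iota$. A second, independent $\mathbb{Z}_2$ arises because the two incoming edges at $V$ (and equivalently the two outgoing ones) may be swapped without changing either $\pi$ or $\iota$. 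Combining these two independent $\mathbb{Z}_2$'s gives the required factor of $4$, while the orderings $o(\Gamma)$ and $o(\tilde\Gamma)$ are bijective once the position of $V$ is fixed as a $2$-valent vertex on an internal edge of $\tilde\Gamma$.

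I expect the main obstacle to lie in making this last bijection precise in the presence of nontrivial $\Aut(\tilde\Gamma)$ and in carefully accounting for the interactions between the two $\mathbb{Z}_2$-factors and automorphisms already present in $\tilde\Gamma$. As a sanity check, the example $\tilde{h}_0((\mu),(\nu_1,\nu_2))$ with contribution $\mu(\mu-1)$ (where $|\Aut(\pi)|=4$, $|\Aut(\tilde\Gamma)|=1$, and $o(\Gamma)=o(\tilde\Gamma)=1$) confirms the identity; the general case should follow by the same combinatorial analysis of how $\iota$-paired $3$-valent vertices and paired edges in $\Gamma$ correspond to single $3$-valent vertices and edges in $\tilde\Gamma$.
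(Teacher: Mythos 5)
Your factor-by-factor comparison for the $m(m-1)$ part is correct and matches the paper: the unique $4$-valent vertex $V$ (guaranteed by \cref{prop-ppingenus0}) contributes the single factor $\omega_V-1=m-1$, and merging the two weight-$m$ edges at the $2$-valent vertex of $\Gamma/\iota$ to form $\tilde\Gamma$ removes exactly one factor of $m$ from the product over internal edges.

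The remaining part contains two genuine gaps. First, you invoke \cref{cor-monodromygraphs} and bring the ordering counts $o(\Gamma)$, $o(\tilde\Gamma)$ into play, but the lemma is a \emph{per-graph} statement for a fixed vertex ordering, i.e.\ it compares the weight from \cref{thm-monodromygraphs} with the weight of $\tilde\Gamma$ from Corollary 4.4 of \cite{CJM10}; the remark immediately after the lemma explains separately how to handle unordered graphs. Your proposed ``combinatorial identity'' $|\Aut(\pi)|\cdot o(\tilde\Gamma)=4\cdot o(\Gamma)\cdot|\Aut(\tilde\Gamma)|$ is therefore not what is needed, and in fact it is false: since $\Gamma/\iota$ has one more vertex than $\tilde\Gamma$ (the $2$-valent one), a vertex ordering of $\tilde\Gamma$ extends to as many orderings of $\Gamma/\iota\cong\Gamma$ as there are admissible slots for that vertex, so in general $o(\Gamma)>o(\tilde\Gamma)$. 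For instance, when the $2$-valent vertex sits on an internal edge $v_1v_2$ of $\tilde\Gamma$ and there is another vertex $v_3$ with $v_1<v_3$ but $v_3$ incomparable to $v_2$, one obtains $o(\Gamma)=3$ while $o(\tilde\Gamma)=2$, and with $|\Aut(\pi)|=4$, $|\Aut(\tilde\Gamma)|=1$ your identity would read $8=12$. Dropping the $o$'s, the correct target is just $|\Aut(\pi)|=4\cdot|\Aut(\tilde\Gamma)|$. Second, you leave the size of $\Aut(\tilde\Gamma)$ as the ``main obstacle,'' but this is the one easy step you should not skip: a $3$-valent genus-$0$ tropical cover with variable end weights has trivial automorphism group, because the only possible source of automorphisms is a balanced fork (two adjacent ends of equal weight), and none occur when $\mu_i,\nu_j$ are treated as variables. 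Once you observe $|\Aut(\tilde\Gamma)|=1$, all that remains is $|\Aut(\pi)|=4$, and your two-$\mathbb{Z}_2$ description (the global $\iota$, together with the extra automorphism fixing $V$ that swaps its two incoming or, equivalently, its two outgoing edges) is exactly the argument the paper uses.
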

\begin{proof}
By Proposition \ref{prop-ppingenus0}, $\Gamma$ has precisely one $4$-valent vertex. Accordingly, $\Gamma/\iota$ has precisely one $2$-valent vertex and thus $\tilde{\Gamma}$ is well-defined.

Assume the $4$-valent vertex is adjacent to edges of weight $m$.

The graph $\tilde{\Gamma}$ is $3$-valent and has in-ends labeled by the $\mu_i$ and out-ends labeled by the $\nu_j$. By Corollary 4.4 in \cite{CJM10}, it contributes the product of the weights of its bounded edges towards the Hurwitz number $h_0(\mu,\nu)$. Notice that a $3$-valent tropical cover of genus $0$ with different branch points cannot have non-trivial automorphisms if the weights of the ends are viewed as variable, as balanced forks (i.e. adjacent ends of the same weight) are the only source of automorphism.

By Corollary \ref{thm-monodromygraphs}, the weight $p$ with which $\Gamma$ contributes of $\tilde{h}_0(\mu,\nu)$ equals 

$2^{b}\cdot (m-1)\cdot\prod_e\omega(e)\cdot \frac{1}{|\Aut(\pi)|}$.

Here, $b$ is the number of branch points, and the product goes over all bounded edges of $\Gamma/\iota$.


The involution $\iota$ of $\Gamma$ provides a non-trivial automorphism. By Proposition \ref{prop-ppingenus0}, $\Gamma$ has precisely one $4$-valent vertex. This provides an additional automorphism. As the quotient with respect to $i$ does not have further automorphisms, we conclude that $|\Aut(\pi)|=4$.

The graph $\Gamma/\iota$ has an additional bounded edge of weight $m$ compared to $\tilde{\Gamma}$, this edge gets lost when we forget the $2$-valent vertex.

It follows that $p$ equals
$2^{b-2}\cdot m \cdot (m-1)$ times the weight with which $\tilde{\Gamma}$ contributes to $h_0(\mu,\nu)$.

\end{proof}

\begin{remark}
As a vertex ordering of $\Gamma/\iota$ implies a vertex ordering of $\Gamma$ and vice versa, Lemma \ref{lem-compareweights} can be extended to monodromy graphs without a fixed vertex ordering. If we fix a vertex ordering on $\tilde{\Gamma}$ and a position for the $2$-valent vertex, the vertex ordering on $\Gamma$ is given.
\end{remark}

\begin{proof}[Proof of Proposition \ref{thm-wc}:]
By Lemma 6.9 of \cite{CJM10}, only graphs that have an edge of weight $\delta$ contribute to the wall-crossing.

Let $\Gamma$ be a twisted monodromy graph of type $(0,\mu,\nu)$ without a vertex ordering with a pair of bounded edges of weight $\delta$.

We can express the weight with which $\Gamma$ contributes in terms of the weight $m$ of the edge adjacent to the $2$-valent vertex and the weight of $\tilde{\Gamma}$, as in Lemma \ref{lem-compareweights}.

The graph $\tilde{\Gamma}$ has precisely one edge of weight $\delta$. This holds true, since in genus $0$, the weight of each edge is given, by the balancing condition, as the sum of the (signed) weights of the ends which get separated by cutting an edge.  
On the other side of the wall, the graph but with the direction of the edge reversed contributes. The first term corresponds to the contribution of those graphs where the edge of weight $\delta$ is adjacent to a $2$-valent vertex. 

We now assume that the edge of weight $\delta$ of the graph $\tilde{\Gamma}$ is not adjacent to a $2$-valent vertex. We express the difference of those two contributions (i.e.\ the contribution to the wall-crossing governed by $\Gamma$) in terms of cut graphs.

We cut the edge $\delta$ of $\tilde{\Gamma}$.
Then we obtain two connected components, one containing the in-ends with weights $\mu_i$ where $i\in I$ and the out-ends with weights $\nu_j$, $j\in J$, plus an additional new end that we view as an out-end of weight $\delta$, and the other one containing the in-ends $\mu_i$ for $i \notin I$, with an additional in-end of weight $\delta$, and the out-ends $\nu_j$ for $j\notin J$.

These graphs contribute to ''smaller'' Hurwitz numbers $h_0\big(\mu_I,(\nu_J,\delta)\big)$ and $h_0\big((\mu_{I^c},\delta),\nu_{J^c}\big)$.

But we still need to take the $4$-valent vertex into account.
That is, we now mark the $2$-valent vertex in the cut graph $\tilde{\Gamma}$ again and
distinguish two cases according to its position:
\begin{itemize}
    \item either the $2$-valent vertex belongs to the component contributing to $h_0\big(\mu_I,(\nu_J,\delta)\big)$, 
    \item or to $h_0\big((\mu_{I^c},\delta),\nu_{J^c}\big)$.
\end{itemize}

In both cases, we can interpret the piece with the $2$-valent vertex as a contribution to a twisted Hurwitz number.

Vice versa, if two pieces are given, we can re-glue to obtain $\Gamma/\iota$. 
The piece with ends $(\mu_I,(\nu_J,\delta))$ (we call it the first piece) has $\ell(\mu_I)+\ell(\nu_J)+1$ ends, and thus, by an Euler characteristics computation, $\ell(\mu_I)+\ell(\nu_J)-1$ threevalent vertices. In the first summand, it also has the $2$-valent vertex in addition.
The whole graph has $\ell(\mu)+\ell(\nu)-1$ vertices, and the second piece (with ends  $((\mu_{I^c},\delta),\nu_{J^c})$)
has $\ell(\mu_{I^c})+\ell(\nu_{J^c})-1$ vertices. 
The binomial factor $\binom{\ell(\mu)+\ell(\nu)-1}{\ell(\mu_I)+\ell(\nu_J)}$ in the first summand accounts for the possibilities to pick positions for the $\ell(\mu_I)+\ell(\nu_J)$ vertices of the first piece among the total number of $\ell(\mu)+\ell(\nu)-1$ branch points. Since the vertices in both pieces are ordered among themselves, each such choice yields an ordering of all the vertices. Some of these choices correspond to orderings for which the edge with weight $\delta$ is oriented in one way, and some to orderings for which it is oriented in the other way. Accordingly, the glued picture can belong to one side of the wall or to the other. But as $\delta$ changes sign as we cross the wall (and as we subtract the contribution on one side from the other), the sign is taken care of automatically and the glued graph contributes correctly to the wall-crossing.
In the second summand, the binomial factor has to be changed to $\binom{\ell(\mu)+\ell(\nu)-1}{\ell(\mu_I)+\ell(\nu_J)-1}$, as there the $2$-valent vertex is in the second piece.
\end{proof}

To end with, we demonstrate in an example why we excluded the case of the edge of weight $\delta$ being adjacent to a $2$-valent vertex in the discussion on cut-graphs in the proof of \cref{thm-wc}.

\begin{example}
We consider the twisted Hurwitz number $\tilde{h}_0((\mu,\nu),(\lambda,\kappa))$ and fix $\delta=\mu-\lambda=\kappa-\nu$. Moreover, we consider the graph $\Gamma$ in the top of \cref{fig:badex} contributing to it. In the bottom, the graph $\tilde{\Gamma}$ illustrates its quotient by the indicated involution. We observe that there are only two possible vertex orderings of $\tilde{\Gamma}$, the first being the one illustrated and the second being the inverse one. First, we observe that in the chamber $\delta>0$ the given graph is counted with the weight
\begin{equation}
    2^3\cdot (\delta-1)\delta^2.
\end{equation}
However, in the chamber $\delta<0$, the graph contributes a weight of
\begin{equation}
        2^3\cdot (-\delta-1)\delta^2.
\end{equation}

Therefore, factoring out the edge weights is not possible in this situation, which represents the first obstacle for the tropical approach of \cite{CJM11}.\\
The second obstacle arises from the possible number of orientations after gluing together the cut-graphs. This is illustrated in \cref{fig:badexcut}. In particular, we observe that the $2$-valent vertex imposes the condition that it lies between the upper and the lower vertex of the respective cut-graphs. The enumeration of possible orderings is therefore not a simple binomial coefficient and depends on the graph at hand.
\end{example}
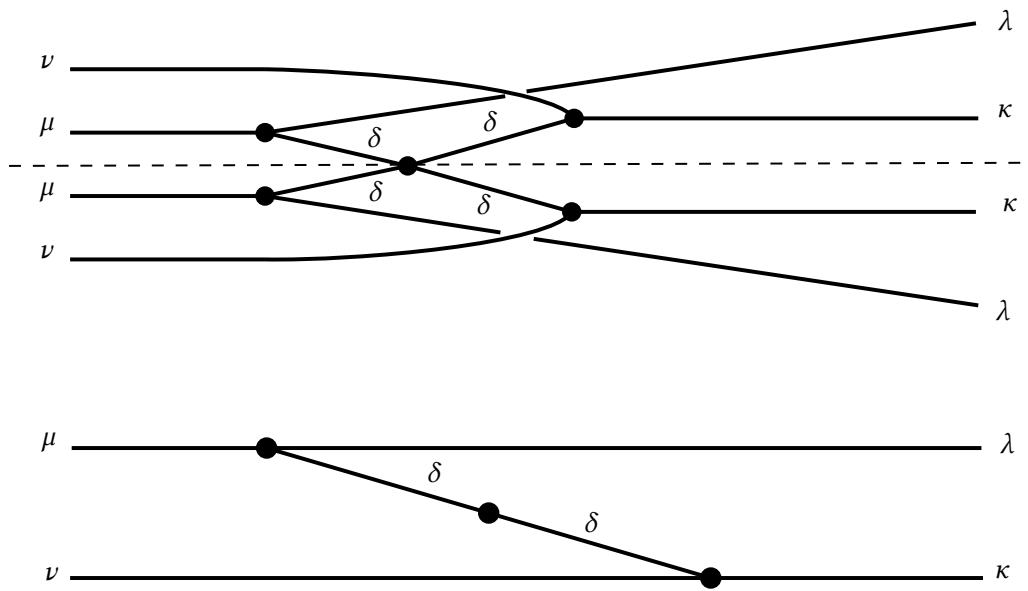
\begin{figure}

\tikzset{every picture/.style={line width=0.75pt}} 

\begin{tikzpicture}[scale=0.8,x=0.75pt,y=0.75pt,yscale=-1,xscale=1]

\draw [line width=1.5]    (47.48,50) -- (170.38,50) ;
\draw [line width=1.5]    (47.48,90) -- (170.38,90) ;
\draw [line width=1.5]    (47.48,130) -- (170.38,130) ;
\draw [line width=1.5]    (47.48,170) -- (170.38,170) ;
\draw [line width=1.5]    (170.38,90) -- (260.31,111) ;
\draw [line width=1.5]    (170.38,130) -- (260.31,111) ;
\draw [line width=1.5]    (260.31,111) -- (365.23,81) ;
\draw [line width=1.5]    (365.23,81) -- (620.02,81) ;
\draw [line width=1.5]    (260.31,111) -- (363.73,140) ;
\draw [line width=1.5]    (170.38,90) -- (321.76,67) ;
\draw [line width=1.5]    (335.25,64) -- (618.53,21) ;
\draw [line width=1.5]    (363.73,140) -- (618.53,140) ;
\draw [line width=1.5]    (170.38,130) -- (318.76,153) ;
\draw [line width=1.5]    (339.75,157) -- (620.02,199) ;
\draw [line width=1.5]    (170.38,50) .. controls (228.84,51) and (351.74,61) .. (365.23,81) ;
\draw [line width=1.5]    (170.38,170) .. controls (228.84,171) and (351.74,160) .. (363.73,140) ;
\draw  [color={rgb, 255:red, 0; green, 0; blue, 0 }  ,draw opacity=1 ][fill={rgb, 255:red, 0; green, 0; blue, 0 }  ,fill opacity=1 ] (164.88,130) .. controls (164.88,126.96) and (167.34,124.5) .. (170.38,124.5) .. controls (173.42,124.5) and (175.88,126.96) .. (175.88,130) .. controls (175.88,133.04) and (173.42,135.5) .. (170.38,135.5) .. controls (167.34,135.5) and (164.88,133.04) .. (164.88,130) -- cycle ;
\draw  [color={rgb, 255:red, 0; green, 0; blue, 0 }  ,draw opacity=1 ][fill={rgb, 255:red, 0; green, 0; blue, 0 }  ,fill opacity=1 ] (164.88,90) .. controls (164.88,86.96) and (167.34,84.5) .. (170.38,84.5) .. controls (173.42,84.5) and (175.88,86.96) .. (175.88,90) .. controls (175.88,93.04) and (173.42,95.5) .. (170.38,95.5) .. controls (167.34,95.5) and (164.88,93.04) .. (164.88,90) -- cycle ;
\draw  [color={rgb, 255:red, 0; green, 0; blue, 0 }  ,draw opacity=1 ][fill={rgb, 255:red, 0; green, 0; blue, 0 }  ,fill opacity=1 ] (254.81,111) .. controls (254.81,107.96) and (257.27,105.5) .. (260.31,105.5) .. controls (263.35,105.5) and (265.81,107.96) .. (265.81,111) .. controls (265.81,114.04) and (263.35,116.5) .. (260.31,116.5) .. controls (257.27,116.5) and (254.81,114.04) .. (254.81,111) -- cycle ;
\draw  [color={rgb, 255:red, 0; green, 0; blue, 0 }  ,draw opacity=1 ][fill={rgb, 255:red, 0; green, 0; blue, 0 }  ,fill opacity=1 ] (358.23,140) .. controls (358.23,136.96) and (360.69,134.5) .. (363.73,134.5) .. controls (366.77,134.5) and (369.23,136.96) .. (369.23,140) .. controls (369.23,143.04) and (366.77,145.5) .. (363.73,145.5) .. controls (360.69,145.5) and (358.23,143.04) .. (358.23,140) -- cycle ;
\draw  [color={rgb, 255:red, 0; green, 0; blue, 0 }  ,draw opacity=1 ][fill={rgb, 255:red, 0; green, 0; blue, 0 }  ,fill opacity=1 ] (359.73,81) .. controls (359.73,77.96) and (362.19,75.5) .. (365.23,75.5) .. controls (368.26,75.5) and (370.73,77.96) .. (370.73,81) .. controls (370.73,84.04) and (368.26,86.5) .. (365.23,86.5) .. controls (362.19,86.5) and (359.73,84.04) .. (359.73,81) -- cycle ;
\draw  [dash pattern={on 4.5pt off 4.5pt}]  (9,111) -- (657,109) ;
\draw [line width=1.5]    (48.48,289) -- (171.38,289) ;
\draw  [color={rgb, 255:red, 0; green, 0; blue, 0 }  ,draw opacity=1 ][fill={rgb, 255:red, 0; green, 0; blue, 0 }  ,fill opacity=1 ][line width=1.5]  (165.88,289) .. controls (165.88,285.96) and (168.34,283.5) .. (171.38,283.5) .. controls (174.42,283.5) and (176.88,285.96) .. (176.88,289) .. controls (176.88,292.04) and (174.42,294.5) .. (171.38,294.5) .. controls (168.34,294.5) and (165.88,292.04) .. (165.88,289) -- cycle ;
\draw [line width=1.5]    (171.38,289) -- (622,289) ;
\draw [line width=1.5]    (47.48,371) -- (623,371) ;
\draw  [color={rgb, 255:red, 0; green, 0; blue, 0 }  ,draw opacity=1 ][fill={rgb, 255:red, 0; green, 0; blue, 0 }  ,fill opacity=1 ][line width=1.5]  (445.88,371) .. controls (445.88,367.96) and (448.34,365.5) .. (451.38,365.5) .. controls (454.42,365.5) and (456.88,367.96) .. (456.88,371) .. controls (456.88,374.04) and (454.42,376.5) .. (451.38,376.5) .. controls (448.34,376.5) and (445.88,374.04) .. (445.88,371) -- cycle ;
\draw [line width=1.5]    (171.38,289) -- (451.38,371) ;
\draw  [color={rgb, 255:red, 0; green, 0; blue, 0 }  ,draw opacity=1 ][fill={rgb, 255:red, 0; green, 0; blue, 0 }  ,fill opacity=1 ][line width=1.5]  (305.88,330) .. controls (305.88,326.96) and (308.34,324.5) .. (311.38,324.5) .. controls (314.42,324.5) and (316.88,326.96) .. (316.88,330) .. controls (316.88,333.04) and (314.42,335.5) .. (311.38,335.5) .. controls (308.34,335.5) and (305.88,333.04) .. (305.88,330) -- cycle ;

\draw (26.49,78.4) node [anchor=north west][inner sep=0.75pt]    {$\mu $};
\draw (26.49,119.4) node [anchor=north west][inner sep=0.75pt]    {$\mu $};
\draw (27.99,159.4) node [anchor=north west][inner sep=0.75pt]    {$\nu $};
\draw (27.99,39.4) node [anchor=north west][inner sep=0.75pt]    {$\nu $};
\draw (630.76,10.4) node [anchor=north west][inner sep=0.75pt]    {$\lambda $};
\draw (629.26,193.4) node [anchor=north west][inner sep=0.75pt]    {$\lambda $};
\draw (630.51,70.4) node [anchor=north west][inner sep=0.75pt]    {$\kappa $};
\draw (633.51,129.4) node [anchor=north west][inner sep=0.75pt]    {$\kappa $};
\draw (233,84.4) node [anchor=north west][inner sep=0.75pt]    {$\delta $};
\draw (234,118.4) node [anchor=north west][inner sep=0.75pt]    {$\delta $};
\draw (306,74.4) node [anchor=north west][inner sep=0.75pt]    {$\delta $};
\draw (302,126.4) node [anchor=north west][inner sep=0.75pt]    {$\delta $};
\draw (27.49,277.4) node [anchor=north west][inner sep=0.75pt]    {$\mu $};
\draw (632,277.4) node [anchor=north west][inner sep=0.75pt]    {$\lambda $};
\draw (30.99,362.4) node [anchor=north west][inner sep=0.75pt]    {$\nu $};
\draw (629,361.4) node [anchor=north west][inner sep=0.75pt]    {$\kappa $};
\draw (271,295.4) node [anchor=north west][inner sep=0.75pt]    {$\delta $};
\draw (370,327.4) node [anchor=north west][inner sep=0.75pt]    {$\delta $};

\end{tikzpicture}
    \caption{The graph on the top contributes to $\tilde{h}_{0}((\mu,\nu),(\lambda,\kappa))$. The graph on the bottom is the quotient by the involution.}
    \label{fig:badex}
\end{figure}

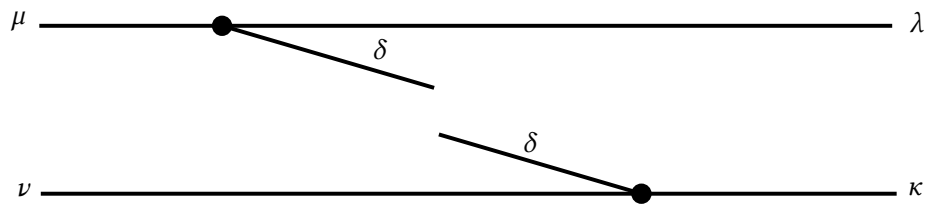
\begin{figure}

\tikzset{every picture/.style={line width=0.75pt}} 

\begin{tikzpicture}[scale=0.75,x=0.75pt,y=0.75pt,yscale=-1,xscale=1]

\draw [line width=1.5]    (30.99,59) -- (153.89,59) ;
\draw  [color={rgb, 255:red, 0; green, 0; blue, 0 }  ,draw opacity=1 ][fill={rgb, 255:red, 0; green, 0; blue, 0 }  ,fill opacity=1 ][line width=1.5]  (148.39,59) .. controls (148.39,55.96) and (150.85,53.5) .. (153.89,53.5) .. controls (156.93,53.5) and (159.39,55.96) .. (159.39,59) .. controls (159.39,62.04) and (156.93,64.5) .. (153.89,64.5) .. controls (150.85,64.5) and (148.39,62.04) .. (148.39,59) -- cycle ;
\draw [line width=1.5]    (153.89,59) -- (604.51,59) ;
\draw [line width=1.5]    (153.89,59) -- (296.48,100.76) ;
\draw [line width=1.5]    (31.99,172) -- (607.51,172) ;
\draw  [color={rgb, 255:red, 0; green, 0; blue, 0 }  ,draw opacity=1 ][fill={rgb, 255:red, 0; green, 0; blue, 0 }  ,fill opacity=1 ][line width=1.5]  (430.39,172) .. controls (430.39,168.96) and (432.85,166.5) .. (435.89,166.5) .. controls (438.93,166.5) and (441.39,168.96) .. (441.39,172) .. controls (441.39,175.04) and (438.93,177.5) .. (435.89,177.5) .. controls (432.85,177.5) and (430.39,175.04) .. (430.39,172) -- cycle ;
\draw [line width=1.5]    (299.68,132.11) -- (435.89,172) ;

\draw (10,47.4) node [anchor=north west][inner sep=0.75pt]    {$\mu $};
\draw (614.51,47.4) node [anchor=north west][inner sep=0.75pt]    {$\lambda $};
\draw (253.51,65.4) node [anchor=north west][inner sep=0.75pt]    {$\delta $};
\draw (15.5,163.4) node [anchor=north west][inner sep=0.75pt]    {$\nu $};
\draw (613.51,162.4) node [anchor=north west][inner sep=0.75pt]    {$\kappa $};
\draw (354.51,128.4) node [anchor=north west][inner sep=0.75pt]    {$\delta $};

\end{tikzpicture}
    \caption{The graph at the bottom of \cref{fig:badex} cut along the edge $\delta$.}
    \label{fig:badexcut}
\end{figure}

\printbibliography

\end{document}